\date{}
\def\nd{\noindent}
\def\thend{\rule{3mm}{3mm}}
\newtheorem{theorem}{Theorem}[section]
\newtheorem{prop}{Proposition}[section]
\newtheorem{lem}{Lemma}[section]
\newtheorem{rmk}{Remark}[section]
\newcommand{\w}{W_0^{1,\Phi}(\Omega)}
\newcommand{\cqd}{\hspace{10pt}\fbox{}}
\newcommand{\Int}{\displaystyle\int_{\Omega}}
\newcommand{\ds}{\displaystyle}
\newcommand{\Fr}{\displaystyle\frac}
\newcommand{\on}{o_n(1)}
\newcommand{\el}{\ell^*}
\newcommand{\R}{\mathbb{R}}
\newcommand{\N}{\mathcal{N}_\lambda}
\begin{document}
%\color{red}

\title[Concave-convex effects for critical quasilinear elliptic problems]
{Concave-convex effects for critical quasilinear elliptic problems}
\vspace{1cm}
%%%%%%%%%%%%%%%%%%%%%%%%%%%%%%%%%%%%%%%%%%%%%%%%%%%%%%%%%%%%%%%%%%%%%%%%

\author{M. L. M. Carvalho}
\address{M. L. M. Carvalho \newline Universidade Federal de  Goi\'as, IME, Goi\^ania-GO, Brazil }
\email{\tt marcos$\_$leandro$\_$carvalho@ufg.br}

\author{E. D. da Silva}
\address{Edcarlos D da Silva \newline  Universidade Federal de  Goi\'as, IME, Goi\^ania-GO, Brazil}
\email{\tt edcarlos@ufg.br}

\author{J. V. Goncalves.}
\address{J. V. Goncalves. \newline Universidade Federal de  Goi\'as,IME, Goi\^ania-GO, Brazil}
\email{\tt goncalvesjva@ufg.br}

\author{C. Goulart}
\address{C. Goulart \newline Universidade Federal de Goi\'as, Regionsl Jata\'i, Jata\'i, Brazil }
\email{\tt claudiney@ufg.br}

\subjclass{35J20, 35J25, 35J60, 35J92, 58E05} \keywords{Variational methods, Quasilinear Elliptic Problems, Nehari method, indefinite elliptic problems}
\thanks{The third author was partially supported by Cnpq and Fapego grants.}

\begin{abstract}
It is established existence, multiplicity and asymptotic behavior of positive solutions for a quasilinear elliptic problem driven by the $\Phi$-Laplacian operator. One of these solutions is obtained as ground state solution by applying the well known Nehari method. The semilinear term  in the quasilinear equation is a concave-convex function which presents a critical behavior at infinity. The concentration compactness principle is used in order to recover the compactness required in variational methods.
\end{abstract}

\maketitle

\section{Introduction}

In this work we deal with existence, multiplicity and asymptotic behaviour  of positive solutions of the  problem
\begin{equation}\label{eq1}
-\Delta_{\Phi} u   =  \lambda a(x)|u|^{q-2}u  + b(x)|u|^{\ell^*-2}u~   \mbox{ in }\    \Omega,~~u  = 0~   \mbox{ in } \partial\Omega,
\end{equation}
where $\Omega\subset\mathbb{R}^{N}$ is a bounded smooth domain, $\lambda>0$ is a parameter, $\ell^* :=  N \ell/(N - \ell)$ with $1 < \ell < N$ and $a, b: \Omega \to \mathbb{R}$ are  functions which may change sign. The operator $\Delta_{\Phi}$ is named $\Phi$-Laplacian and is given by
$$
\Delta_{\Phi} u =  \mbox{div} ( \phi(|\nabla u|) \nabla u)
$$
\nd where $\phi: (0,\infty)\rightarrow (0,\infty)$ is a $C^{2}$-function satisfying
\begin{description}
  \item[$(\phi_1)$] $\displaystyle \lim_{s \rightarrow 0} s \phi(s)= 0, \displaystyle \lim_{s \rightarrow \infty} s \phi(s)= \infty$;
  \item[$(\phi_2)$] $s\mapsto s\phi(s)$ is strictly increasing.
 \end{description}
\nd We extend $s \mapsto s \phi(s)$ to  $\mathbb{R}$ as an odd function. The function $\Phi$ is given by
$$
\Phi(t) = \int_{0}^{t} s \phi(s) ds,~ t \geq 0
$$
\nd and satisfies $\Phi(t) = \Phi(-t)$ for each $t \leq 0$. For futher results on Orlicz and Orlicz-Sobolev framework we refer the reader to Adans \cite{A}, Fukagai et al \cite{Fuk_1,Fuk_2}, Gossez \cite{Gz1,gossez-Czech} and Rao \cite{Rao1}.
\vskip.1cm

\nd Quasilinear elliptic problems such as \eqref{eq1} have been considered in order to explain many physical problems which arise from
 Nonlinear Elasticity, Plasticity and both Newtonian and  Non-Newtonian Fluids.  We refer the reader to \cite{appl1,appl2,Fuk_1,Fuk_2,fang}.
\vskip.2cm

\nd When $\phi := 2$, $a = b := 1$ we notice that $\ell = 2$. Then problem \eqref{eq1} reads as
 \begin{equation}\label{eq1-0}
-\Delta u = \lambda |u|^{q-2}u + |u|^{2^{*}-2}u  \;\; in \;\; \Omega,~ u = 0 \;\; on \;\; \partial \Omega.
 \end{equation}

\nd In the pioneering paper \cite{BN-1}, Br\'ezis \& Nirenberg proved  results on existence of positive solutions of \eqref{eq1-0}. A new  variational technique was developed  to overcome difficulties due to the presence of the critical Sobolev exponent  $2^{*} = \frac{2N}{N - 2}$.
\vskip.1cm

\nd  Problem \eqref{eq1-0} was later addressed  by Ambrosetti, Br\'ezis \& Cerami \cite{ABC-1} where among other results it was shown that there is some $\Lambda > 0$ such that \eqref{eq1-0}
has
$$
\mbox{a positive minimal solution}
\;\; u_{\lambda} \in H_{0}^{1}~ \mbox{for each}~  \lambda \in (0, \Lambda),
\;\; \mbox{with} \leqno(i)
$$
$$
\frac{1}{2} \int_{\Omega} \vert \nabla u_{\lambda} \vert^{2} -
\frac{\lambda}{q + 1} \int_{\Omega} u_{\lambda}^{q + 1} -
\frac{1}{2^{*}} \int_{\Omega} u_{\lambda}^{2^{*}} \; < \; 0
$$
$$
\mbox{when} \;\; 1 < q < 2, ~N \geq 1,
$$
$$
\mbox{a positive weak solution} \;\; u_{\lambda} \in H_{0}^{1}
 \;\; \mbox{for} \;\; \lambda = \Lambda  \leqno(ii)
$$
$$
\mbox{when} \;\;
2 < q < 3, ~ N \geq 3,
$$
$$
\mbox{no positive solution when} \;\; \lambda > \Lambda. \leqno(iii)
$$

\nd Moreover, in the first case above, $\Vert u_{\lambda}
\Vert_{\infty} \rightarrow 0$ as $\lambda \rightarrow 0$.
\vskip.1cm

\nd We further refer the reader   Alama \& Tarantello \cite{Alama-Tarantello-1},
Admurthi, Pacella \& Yadava \cite{Ad-Pacella-Yadava-1}  and their references.
\vskip0.3cm

\nd When $\phi(t) = r t^{r-2}$,  $1< r < \infty$ and $a = b := 1$
problem \eqref{eq1} becomes
\begin{equation}\label{ar}
\displaystyle  - \Delta_r u = \lambda |u|^{q-2}u  + |u|^{p-2}u ~ \mbox{in}~
\Omega,~~ u = 0~  \mbox{on}~ \partial \Omega.
\end{equation}
\nd This problem was studied by Ambrosetti, Garcia Azorero \& Peral \cite{ambrosetti-azorero-peral-1} and subsequently by many other researchers.
\vskip.1cm

\nd It is worthwhile mention that conditions $(\phi_{1})- (\phi_{2})$ implies that the function $\Phi$ is an N-function and in addition due to the expression of $\Delta_{\Phi}$ it is natural to work in the framework of Orlicz-Sobolev spaces, for basic results on Orlicz and Orlicz Sobolev spaces we infer the reader to \cite{Gz1,gossez-Czech,Rao1}. It is well known that $W^{1,\Phi}_{0}(\Omega)$ is not equal in general to $W^{1,q}_{0}(\Omega)$ for any $q \in [1, + \infty)$. As example we cite $\Phi(t) = |t|^{p} ln (1 + |t|), p > 1,$ which satisfies $W^{1,\Phi}_{0}(\Omega) \neq W^{1,q}_{0}(\Omega)$ for any $q \in [1, + \infty)$. Hence is not possible to consider the usual Sobolev spaces $W^{1,q}_{0}(\Omega)$ in order to ensure existence and multiplicity of solutions for the problem \eqref{eq1}.

The following additional condition on $\phi$ will also be assumed:

\begin{description}
 \item[($\phi_3$)] $ -1<\ell-2:=\ds\inf_{t>0}\Fr{(t\phi(t))''t}{(t\phi(t))'}\leq \ds\sup_{t>0}\Fr{(t\phi(t))''t}{(t\phi(t))'}=:m-2<N-2.$
\end{description}

The reader is  referred to  \cite{A,Rao1} regarding Orlicz-Sobolev spaces.  The usual norm on $L_{\Phi}(\Omega)$ is ( Luxemburg norm),
$$
\|u\|_\Phi=\inf\left\{\lambda>0~|~\int_\Omega \Phi\left(\frac{u(x)}{\lambda}\right) dx \leq 1\right\}
$$
and  the  Orlicz-Sobolev norm of $ W^{1, \Phi}(\Omega)$ is
\[
  \displaystyle \|u\| = \|u\|_\Phi+\sum_{i=1}^N\left\|\frac{\partial u}{\partial x_i}\right\|_\Phi.
\]
We say that a N-function $\Psi$ grow essentially more slowly than $\Phi_*$, we write $\Psi<<\Phi_*$ whenever
$$
\lim_{t\rightarrow \infty}\frac{\Psi(\lambda t)}{\Phi_*(t)}=0,~~\mbox{for all}~~\lambda >0.
$$
Recall that
$$
\widetilde{\Phi}(t) = \displaystyle \max_{s \geq 0} \{ts - \Phi(s) \},~ t \geq 0.
$$

The imbedding below (cf. \cite{A, DT}) will be used in this paper:
$$
\displaystyle W_{0}^{1,\Phi}(\Omega) \stackrel{\tiny cpt}\hookrightarrow L_\Psi(\Omega),~~\mbox{if}~~\Psi<<\Phi_*,
$$
in particular, as $\Phi<<\Phi_*$ (cf. \cite[Lemma 4.14]{Gz1}),
$$
W_{0}^{1,\Phi}(\Omega) \stackrel{\tiny{cpt}} \hookrightarrow L_\Phi(\Omega).
$$
Furthermore, we have the following embeddings
$$
W_0^{1,\Phi}(\Omega) \stackrel{\mbox{\tiny cont}}{\hookrightarrow} L_{\Phi_*}(\Omega)
$$
and
$$
L_{\Phi}(\Omega) \stackrel{\mbox{\tiny cont}}{\hookrightarrow} L^{\ell}(\Omega),  L_{\Phi_{*}}(\Omega) \stackrel{\mbox{\tiny cont}}{\hookrightarrow} L^{\ell^{*}}(\Omega).
$$

Under assumptions $(\phi_{1}) - (\phi_{3})$ it turns out that  $\Phi$ and $\widetilde{\Phi}$  are  N-functions  satisfying  the $\Delta_2$-condition, (cf. \cite[p 22]{Rao1}).

\begin{rmk}\label{conseqphi3}
Under assumption $(\phi_{3})$ we observe that
\begin{equation}
\ell-2\leq\Fr{\phi'(t)t}{\phi(t)}\leq m-2,\ \
 \ell\leq\Fr{\phi(t)t^2}{\Phi(t)}\leq m, t > 0.
\end{equation}
Moreover, we have that
$$\left\{\begin{array}{rcl}
t^2\phi''(t)&\leq& (m-4)t\phi'(t)+(m-2)\phi(t)\\
t^2\phi''(t)&\geq& (\ell-4)t\phi'(t)+(\ell-2)\phi(t), t \geq 0.
\end{array}\right.$$
\end{rmk}

\nd Under conditions $(\phi_{1})- (\phi_{2})$ and $(\phi_{3})$ the Orlicz-Sobolev space $W^{1,\Phi}_{0}(\Omega)$ is Banach and reflexive with respect to the  standard norm denoted $\Vert . \Vert$.
\vskip.1cm

\nd We also point out that $\phi(t) = 2$ and $\phi(t) = rt^{r -2}$ satisfy $(\phi_{1})- (\phi_{2})$ and $(\phi_{3})$.
\vskip.2cm

\nd Moreover, when $\phi(t) = 2$ then $m = \ell= 2$,
$\Delta_{\Phi} = \Delta$ and $W^{1,\Phi}_{0}(\Omega) = H_0^1(\Omega)$. When $\phi(t) = rt^{r -2}$ then $m = \ell= r$, $\Delta_{\Phi} = \Delta_r$ and $W^{1,\Phi}_{0}(\Omega) = W_0^{1,r}(\Omega)$.
\vskip.1cm

\nd Many other well known operators are examples of $\Delta_{\Phi}$. For instance, if $\phi(t) = p_{1}t^{p_{1} -2} + p_{2}t^{p_{2} -2}$ with $ 1 < p_{1} < p_{2} < \infty$ then $\phi$ satisfies hypotheses $(\phi_{1})-(\phi_{2})$ and the operator in problem \eqref{eq1} reads as $-\Delta_{p_{1}} u - \Delta_{p_{2}} u$ which is known as the
$(p_{1},p_{2})$-Laplacian and was extensively studied in the last years, see \cite{MugnaiPapageorgiou,tanaka}. We mention that in this case $\ell = p_{1}$ and $m = p_{2}$.

\nd Another class of operators is the so called anisotropic elliptic problem included here as example for $\Delta_{\Phi}$ is obtained by setting
$$\phi(t) = \displaystyle\sum_{j=1}^{N} t^{p_{j}-2}, \Phi(t) =  \displaystyle\sum_{j=1}^{N} \dfrac{t^{p_{j}}}{p_{j}} $$
where $1 < p_{1} < p_{2} < \ldots < p_{N} < \infty$ and
\begin{equation}
p^{\star} = \displaystyle \dfrac{N} {\left(\sum_{j =1}^{N}\frac{1}{p_{j}}\right) - 1},  \displaystyle\sum_{j=1}^{N} \dfrac{1}{p_{j}} > 1.
\end{equation}
\nd Here we consider the case $ p_{N} < p^{\star}$ and $\overline{p} = \dfrac{N}{\sum_{j =1}^{N}\frac{1}{p_{j}}}$ is the mean harmonic for the numbers $p_{j}$ with $j = 1, 2, \ldots, N$. This number satisfies $p^{\star} = \frac{N \overline{p}}{N - \overline{p}}$. It is no hard to verify that hypotheses $(\phi_{1})- (\phi_{3})$ are satisfied for the anisotropic elliptic problem. This operator have been considered during the lasts years which has a rich physical motivation, see \cite{Mot1,Mot2}. For further references we refer the reader to \cite{Fra,Gio,Mi1,Mi2,tru} and references therein.
\vskip.4cm

\nd It is important to emphasize that a great interest on problem \eqref{eq1} for the Laplacian operator have been made since the seminal paper of
Ambrosetti and Rabinowitz \cite{AR}. Our main purpose in this work is to guarantee existence and multiplicity of solutions
for quasilinear elliptic equations drive by $\Phi$-Laplacian using indefinite concave-convex nonlinearities. More specifically, we shall
consider problem \eqref{eq1} where the functions $a$ and $b$ changes sign.

\nd The main aim in this work is to consider the critical growth in the problem \eqref{eq1}. Elliptic problems with critical nonlinearities have been widely considered since the celebrated works of Lions \cite{lions1,lions2,lions3,lions4}. For quasilinear elliptic problems we infer the reader to
\cite{G,AGPeral,Fuk_1,Radu,GPeral,M,S,Willem} and references therein. The main difficult here is the loss of compactness for the embedding
$\w$ into $L^{\ell^{\star}}(\Omega)$. In order to overcome this this difficult we apply the concentration compactness principle together variational methods ensuring our main results.

\nd In this paper we shall assume the following set of technical conditions:
\begin{flushleft}
$(H)$ $\hspace*{.7cm}$ $1<q<\Fr{\ell(\el-m)}{\el-\ell}\leq \ell\leq m<\el,$\hspace*{.4cm}$ a,b \in L^\infty(\Omega),~~~ a^+,b^+\not\equiv 0.$
\end{flushleft}
\vskip.2cm
 The main feature in this work is to use the Nehari method in order to achieve our main results. The hypothesis $(H)$ is essential for the
 minimization procedure which shows that the critical value on the Nehari manifold is negative, see Section 3 ahead.

\nd We recall that under $(\phi_{1})-(\phi_{3})$ the functional $J_{\lambda} : W^{1, \Phi}_{0}(\Omega) \rightarrow \mathbb{R}$ given by
$$
J_{\lambda}(u)=\Int\Phi(|\nabla u|)-\Fr{\lambda}{q} \Int a(x)|u|^{q}-\Fr{1}{\el} \Int b(x)|u|^{\el},\ \ u\in W_0^{1,\Phi}(\Omega),
$$
is well-defined and is of class $C^{1}$. Actually, the derivative of $J_{\lambda}$ is given by
\begin{equation}\label{functional}
\left<J'_{\lambda}(u),v\right> = \Int\phi(|\nabla u|)\nabla u\nabla v-\lambda \Int a(x)|u|^{q-2}uv- \Int b(x)|u|^{\el-2}uv\nonumber
\end{equation}
for any $u,v\in W_0^{1,\Phi}(\Omega)$.  Hence finding weak solutions for the problem \eqref{eq1} is equivalent to find critical points for the functional $J_{\lambda}$.
In general, under hypotheses $(\phi_{1})-(\phi_{3})$, the functional $J_{\lambda}$ is not of class $C^{2}$.
\vskip.1cm

\nd A weak solution $u \in W^{1,\Phi}_{0}(\Omega)$ for equation \eqref{eq1} is said to be a ground state solution when $u$ is a minimal energy solution in the set of all nontrivial solutions. In this work we shall prove existence nonnegative ground state solution using the Nehari method. Besides that, we find another nonnegative solution for the problem \eqref{eq1} using a minimization procedure.  An overview on this subject can be found in Szulkin \verb"&"  Weth \cite{ST,SW}.
\vskip.1cm

\nd Quasilinear elliptic problems driven by $\Phi$-Laplacian operator have been extensively discussed during the last years. We refer the reader to the important works \cite{claudianor,JVMLED, JME, charro,chung,hui-2,MugnaiPapageorgiou,fang}.
\vskip.1cm

\nd  In \cite{JVMLED}
the authors considered existence of positive solutions for quasilinear elliptic problems where the nonlinear term is superlinear at infinity. In \cite{chung,fang} the authors studied existence and multiplicity of solutions where the nonlinear term is also superlinear.  In \cite{claudianor} was studied the critical case using the well known concentration-compactness argument.
\vskip.1cm

\nd Regarding  concave-convex nonlinearities we further refer the reader to  \cite{wu2,DJ,DJ2,depaiva,wu,wu3}.
\vskip.1cm

\nd It is worthwhile mentioning that in our main theorems the functions $a, b$ may change sign and no homogeneity conditions either on the operator or on the nonlinear term is required. More specifically, we emphasize that our nonlinear operator $\Delta_{\Phi}$ is not homogeneous which is a serious difficult
in elliptic problems.   To the best of our knowledge, there is no result on elliptic problems with concave-convex functions for the  $\Phi$-Laplacian  operator in the critical case.
\vskip.1cm

Our  main results are stated below.

\begin{theorem}\label{teorema1} Suppose $(\phi_{1}) - (\phi_{3})$ and $(H)$. Then there exists $\Lambda_1> 0$ such that for each $\lambda \in (0,  \Lambda_{1})$,  problem \eqref{eq1} admits at least one nonnegative ground state solution $u_{\lambda}$ satisfying $J_{\lambda}(u_{\lambda}) < 0$ and $\displaystyle \lim_{\lambda \rightarrow 0^{+}} \|u_{\lambda}\| = 0$.
\end{theorem}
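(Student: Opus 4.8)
\medskip
\noindent\emph{Proof strategy.} The plan is to run the Nehari method. First I would introduce the Nehari manifold $\N=\{u\in\w\setminus\{0\}:\langle J_\lambda'(u),u\rangle=0\}$ and, for $u\neq0$, the fibering map $\gamma_u(t)=J_\lambda(tu)$ (which is $C^{2}$ in $t$), splitting $\N$ into the three pieces $\N^{+},\N^{0},\N^{-}$ according to the sign of $\gamma_u''(1)$. Since $\Delta_\Phi$ is not homogeneous and $J_\lambda$ is only $C^{1}$ on $\w$, the analysis of $\gamma_u$ and the verification that $\N$ is a $C^{1}$-manifold must be based on the inequalities of Remark~\ref{conseqphi3}, namely $\ell\Phi(t)\le\phi(t)t^{2}\le m\Phi(t)$ and $\ell\le\phi(t)t^{2}/\Phi(t)\le m$. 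The role of hypothesis $(H)$---in particular $q<\ell(\el-m)/(\el-\ell)$---is precisely to put the concave power $q$ and the critical power $\el$ on opposite sides of the window $[\ell,m]$, so that whenever $\int_{\Omega}a(x)|u|^{q}\,dx>0$ the map $\gamma_u$ is negative for small $t>0$, attains a strict local minimum at some $t^{+}(u)$ with $t^{+}(u)u\in\N^{+}$ and $\gamma_u(t^{+}(u))<0$, and (using $b^{+}\not\equiv0$) is unbounded below at infinity, so that it also has a local maximum in $\N^{-}$. From this I would deduce: (i) $\N\neq\emptyset$ and $J_\lambda$ is bounded below on $\N$; (ii) there exists $\Lambda_{1}>0$ with $\N^{0}=\emptyset$ for all $\lambda\in(0,\Lambda_{1})$, so that $\N$ is a $C^{1}$-manifold and a natural constraint for $J_\lambda$; (iii) $c_\lambda:=\inf_{\N}J_\lambda=\inf_{\N^{+}}J_\lambda<0$, testing with any $u$ such that $\int_{\Omega}a(x)|u|^{q}>0$ (which exists because $a^{+}\not\equiv0$).

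Next I would extract a minimizing sequence. Since $c_\lambda<0$ and $J_\lambda$ is continuous with $J_\lambda(0)=0$, minimizing sequences for $c_\lambda$ stay bounded away from $0$ in $\w$, so Ekeland's variational principle applies on $\N$ and produces $(u_n)\subset\N$ with $J_\lambda(u_n)\to c_\lambda$ and $\|J_\lambda'|_{\N}(u_n)\|\to0$. As $\N^{0}=\emptyset$ for $\lambda<\Lambda_{1}$, the Lagrange multiplier of the constraint stays bounded, which upgrades this to a genuine Palais--Smale sequence: $J_\lambda(u_n)\to c_\lambda$ and $J_\lambda'(u_n)\to0$ in $(\w)'$. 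Boundedness of $(u_n)$ in $\w$ then follows by combining $J_\lambda(u_n)=c_\lambda+\on$ with $\langle J_\lambda'(u_n),u_n\rangle=\on\|u_n\|$, the inequalities of Remark~\ref{conseqphi3}, and the compact embedding $\w\hookrightarrow L^{q}(\Omega)$ (valid since $q<\ell<\el$), together with $q<m<\el$. Hence, along a subsequence, $u_n\rightharpoonup u$ in $\w$, $u_n\to u$ in $L^{q}(\Omega)$, and $u_n\to u$ a.e.\ in $\Omega$.

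The main obstacle is the passage to the limit in the critical term, because $\w\hookrightarrow L^{\el}(\Omega)$ is not compact. Here I would invoke a concentration--compactness principle of Lions type adapted to the Orlicz--Sobolev space: along a subsequence, $\Phi(|\nabla u_n|)\,dx\rightharpoonup\mu\ge\Phi(|\nabla u|)\,dx+\sum_{j}\mu_j\delta_{x_j}$ and $|u_n|^{\el}\,dx\rightharpoonup\nu=|u|^{\el}\,dx+\sum_{j}\nu_j\delta_{x_j}$ in the weak-$\ast$ sense of measures, with a reverse Sobolev-type inequality relating $\mu_j$ and $\nu_j$ which forces $\nu_j=0$ or $\nu_j\ge\kappa_0$ for some $\kappa_0>0$ depending only on the best constant of $\w\hookrightarrow L^{\el}(\Omega)$. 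Testing $J_\lambda'(u_n)\to0$ against $\varphi u_n$ for a cutoff $\varphi$ concentrated near $x_j$ shows that each surviving atom contributes at least a fixed amount $\kappa>0$ to the limit energy, so that $\lim_n J_\lambda(u_n)\ge J_\lambda(u)+\kappa\,\#\{j:\nu_j\neq0\}$ (the concave term passes to the limit by compactness). But $J_\lambda(u)\ge c_\lambda$ always---because $u\in\N$ when $u\neq0$, while $J_\lambda(0)=0>c_\lambda$ when $u=0$---so if any atom survived we would get $c_\lambda\ge c_\lambda+\kappa$, a contradiction. Therefore there is no concentration, $\int_{\Omega}b(x)|u_n|^{\el}\to\int_{\Omega}b(x)|u|^{\el}$, and a standard monotonicity ($S_{+}$-type) argument for $\Delta_\Phi$ yields $u_n\to u$ strongly in $\w$. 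Consequently $u\in\N$, $J_\lambda(u)=c_\lambda<0$ (hence $u\neq0$), and, $\N$ being a natural constraint, $u$ is a nontrivial critical point of $J_\lambda$, i.e.\ a ground state with $J_\lambda(u)<0$. Replacing $u$ by $|u|$---which leaves $\int_{\Omega}\Phi(|\nabla u|)$, $\int_{\Omega}a|u|^{q}$, $\int_{\Omega}b|u|^{\el}$ unchanged, so that $|u|\in\N$ with the same energy and is again a minimizer over $\N$, hence a critical point---gives a nonnegative ground state $u_\lambda$.

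Finally, for the asymptotics as $\lambda\to0^{+}$: from $\langle J_\lambda'(u_\lambda),u_\lambda\rangle=0$ and $J_\lambda(u_\lambda)=c_\lambda<0$, eliminating $\int_{\Omega}b|u_\lambda|^{\el}$ and using $\phi(t)t^{2}\le m\Phi(t)$ together with $\w\hookrightarrow L^{q}(\Omega)$, one obtains
\[
\Big(1-\frac{m}{\el}\Big)\Int\Phi(|\nabla u_\lambda|)\;\le\;\lambda\Big(\frac{1}{q}-\frac{1}{\el}\Big)\Int a(x)|u_\lambda|^{q}\;\le\;C\lambda\,\|u_\lambda\|^{q},
\]
with $1-m/\el>0$ since $m<\el$. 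Combining this with $\Int\Phi(|\nabla u_\lambda|)\ge c_0\min\{\|u_\lambda\|^{\ell},\|u_\lambda\|^{m}\}$, one sees first that $\|u_\lambda\|<1$ for $\lambda$ small (otherwise $\|u_\lambda\|^{m-q}\le C'\lambda$, impossible for $\lambda$ near $0$ since $m>q$), and then $\|u_\lambda\|^{\ell-q}\le C'\lambda$; since $\ell-q>0$ by $(H)$, this forces $\|u_\lambda\|\to0$. The step I expect to be genuinely delicate is the Orlicz version of the concentration--compactness argument---both setting up the measure limits for the non-homogeneous quantity $\Phi(|\nabla u_n|)$ and carrying out the local estimate at the concentration points---while the sign-level inequality $c_\lambda<0$, itself a consequence of $(H)$, is what ultimately rules out the loss of compactness.
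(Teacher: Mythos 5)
Your proposal is correct in outline and follows the same overall architecture as the paper (fibering maps, the splitting $\N=\N^{+}\cup\N^{0}\cup\N^{-}$ with $\N^{0}=\emptyset$ for small $\lambda$, $\alpha_\lambda^{+}<0$, Ekeland on $\N^{+}$ upgraded to a genuine Palais--Smale sequence, concentration--compactness to handle the critical term, $|u|$ for nonnegativity, and the same elimination argument for $\|u_\lambda\|\to0$). The one place where you take a genuinely different route is the conclusion of compactness. You exclude concentration by an energy-quantization argument: each surviving atom satisfies $\nu_j\ge\kappa_0$, hence costs a fixed amount $\kappa>0$ of energy, which is incompatible with $\lim J_\lambda(u_n)=c_\lambda$ and $J_\lambda(u)\ge c_\lambda$. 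The paper never quantifies the energy of the atoms at the level of Theorem \ref{teorema1}; it only uses the measure lemmas to prove that the weak limit $u$ is a weak solution, shows $u\not\equiv0$ directly from the Nehari identity $\lambda\Int a|u_n|^{q}\ge -J_\lambda(u_n)\,q\el/(\el-q)$ (so $u\in\N$ and $\alpha_\lambda\le J_\lambda(u)$ \emph{before} any strong convergence is known), and then squeezes $J_\lambda(u)=\alpha_\lambda$ using the weak lower semicontinuity of $v\mapsto\Int\Phi(|\nabla v|)-\frac{1}{\el}\phi(|\nabla v|)|\nabla v|^{2}$ (convexity of $\Psi(t)=\Phi(t)-\frac{1}{\el}\phi(t)t^{2}$, which needs $m<\el$) together with the compact convergence of the concave term; strong convergence then follows from the Brezis--Lieb lemma for $\Psi$ and $\Psi(t)\ge(1-m/\el)\Phi(t)$. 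Be aware that in the non-homogeneous Orlicz setting your "each atom contributes at least $\kappa$ to the limit energy" step is not free: to turn it into an inequality of the form $\lim J_\lambda(u_n)\ge J_\lambda(u)+\kappa\,\#\{j:\nu_j\ne0\}$ you essentially need the same Brezis--Lieb/convexity splitting of $\Int\Psi(|\nabla u_n|)$ that the paper uses (a crude bound $\Psi\ge(1-m/\el)\Phi$ applied globally loses the comparison with $J_\lambda(u)$ itself), so the two routes converge on the same technical tool; the paper's version is slightly more economical because it bypasses the lower bound $\mu_j\ge\mathrm{const}$ entirely in this theorem. A second, minor, point: deducing a uniform bound on the Lagrange multiplier from $\N^{0}=\emptyset$ alone is not immediate; the paper proves the uniform lower bound $\gamma''_{u_n}(1)\ge d>0$ by a separate contradiction argument (Proposition \ref{xi_n-uni}), and your sketch should be understood as implicitly invoking that step.
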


Now we shall state our second result.

\begin{theorem}\label{teorema2}
	Suppose $(\phi_{1}) - (\phi_{3})$ and $(H)$. Then there exists $\Lambda_2 > 0$ in such way that for each $\lambda \in (0,  \Lambda_{2})$, problem \eqref{eq1} admits at least one nonnegative weak solution $v_{\lambda}$ satisfying $J_{\lambda}(v_{\lambda}) > 0$ for any $0 < \lambda < \Lambda_{2}$.
\end{theorem}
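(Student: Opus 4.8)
The plan is to obtain $v_\lambda$ as a minimizer of $J_\lambda$ over the ``upper'' sheet $\N^-$ of the Nehari manifold, where the energy is strictly positive, and to restore the compactness lost through the critical exponent by placing the corresponding minimax level strictly below the threshold fixed by the best constant of the embedding $\w\hookrightarrow L^{\ell^*}(\Omega)$.

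First I would carry out the fibering analysis. For $u\in\w\setminus\{0\}$ put $\gamma_u(t)=J_\lambda(tu)$, $t>0$; then $tu\in\N=\{v\ne 0:\langle J'_\lambda(v),v\rangle=0\}$ iff $\gamma'_u(t)=0$, and one splits $\N=\N^+\cup\N^0\cup\N^-$ according to the sign of $\gamma''_u$. Using $(H)$, in particular $1<q<\ell\le m<\ell^*$, I would show that for $\lambda$ small $\N^0=\emptyset$, that every $u$ with $\Int b(x)|u|^{\ell^*}>0$ has a unique projection $t^-(u)u$ onto $\N^-$ which is a strict local maximum of $\gamma_u$ (while the ground state of Theorem \ref{teorema1} sits on $\N^+$ with negative energy), and that $\N^-$ is a $C^1$-submanifold, complete after adjoining $0$, on which $J_\lambda$ is bounded below by a positive constant. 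Although $J_\lambda$ is only $C^1$, the Nehari functional $\Psi_\lambda(u)=\langle J'_\lambda(u),u\rangle$ is of class $C^1$ and, by Remark \ref{conseqphi3}, $\langle\Psi'_\lambda(u),u\rangle\ne 0$ on $\N^\pm$, which legitimizes the Lagrange-multiplier formalism on the constraint; moreover $\N^\pm$ and $\gamma_u$ depend on $u$ only through $|\nabla u|$ and $|u|$, so it suffices to minimize over nonnegative functions.

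Next I would estimate $c_\lambda^-:=\ds\inf_{\N^-}J_\lambda$, and the crucial point is the strict inequality $0<c_\lambda^-<c^*$, where $c^*$ is the energy level below which every $(PS)_c$ sequence of $J_\lambda$ is relatively compact, a quantity expressed through the optimal constant $S$ of $\w\hookrightarrow L^{\ell^*}(\Omega)$ and through $\ell,m$. For the upper bound I would localize near a point where $b$ is positive, insert into $\gamma_u$ a family of truncated, rescaled extremal functions for $S$ adapted to the $\Phi$-Laplacian (as already used in the proof of Theorem \ref{teorema1}), maximize in $t$, and exploit that the concave term $\Fr{\lb}{q}\Int a(x)|u|^q$ contributes a strictly negative correction of order lower than the concentration gain. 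Making this comparison and keeping it strict is the technical heart of the argument and the step I expect to be hardest: the lack of homogeneity of $\Phi$ forces one to control the competition between the $t^\ell$- and $t^m$-behaviour of $\Phi$ along the concentrating family, so the Talenti-type estimates are more delicate than in the $p$-Laplacian case, and one must also check, again via $(H)$, that the sign-changing weight $a$ does not spoil the bound.

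Once $0<c_\lambda^-<c^*$ is secured, I would apply Ekeland's variational principle on $\overline{\N^-}$ to produce a minimizing sequence $(u_n)\subset\N^-$, $u_n\ge 0$, which is a $(PS)_{c_\lambda^-}$ sequence for $J_\lambda$ on $\w$; the $C^1$ constraint structure controls the Lagrange multipliers exactly as for Theorem \ref{teorema1}. Boundedness of $(u_n)$ follows from $(H)$ and Remark \ref{conseqphi3}; the concentration-compactness principle together with $c_\lambda^-<c^*$ rules out vanishing and dichotomy, so along a subsequence $u_n\to v_\lambda$ strongly in $\w$. Then $v_\lambda\in\N^-$ is a weak solution of \eqref{eq1}, $v_\lambda\ge 0$, and $J_\lambda(v_\lambda)=c_\lambda^->0$; since $c_\lambda^->0>J_\lambda(u_\lambda)$, this $v_\lambda$ is distinct from the ground state of Theorem \ref{teorema1}. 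Taking $\Lambda_2$ to be the smallest of the thresholds required by the fibering analysis and by the estimate $c_\lambda^-<c^*$ completes the proof, with $J_\lambda(v_\lambda)>0$ for every $0<\lambda<\Lambda_2$.
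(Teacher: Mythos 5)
Your overall skeleton (minimize $J_\lambda$ on $\N^-$ where the energy is bounded below by $\delta_1>0$, produce a Palais--Smale minimizing sequence via Ekeland and the Lagrange-multiplier analysis of the $C^1$ constraint, pass to the limit) matches the paper. But the mechanism you propose for recovering compactness is not the one the paper uses, and as written it contains a genuine gap. You hang the entire argument on the strict inequality $0<c_\lambda^-<c^*$, to be proved by inserting truncated rescaled extremal functions for the embedding $\w\hookrightarrow L^{\ell^*}(\Omega)$ and carrying out Talenti-type asymptotic expansions. You yourself flag this as the hardest step and do not carry it out; for the non-homogeneous $\Phi$-Laplacian there is no explicit extremal, the expansions must interpolate between the $t^\ell$ and $t^m$ behaviour of $\Phi$, and even the existence of a compactness threshold $c^*$ for this functional (with sign-changing $b$) would itself have to be established. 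Moreover, your claim that such concentrating-family estimates were ``already used in the proof of Theorem \ref{teorema1}'' is false: the paper's proof of Theorem \ref{teorema1} contains no such construction. So the crucial step of your proof is an unproven assertion, not a routine verification.

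The paper avoids the threshold entirely. On $\N$ one has the identity \eqref{eq3}, so $J_\lambda$ restricted to the constraint involves only $\int_\Omega\bigl[\Phi(|\nabla u|)-\frac{1}{\ell^*}\phi(|\nabla u|)|\nabla u|^2\bigr]$ plus a subcritical term; by $(\phi_3)$ and $m<\ell^*$ the integrand $t\mapsto\Phi(t)-\frac{1}{\ell^*}\phi(t)t^2$ is convex, hence this functional is weakly lower semicontinuous. Concentration compactness is used only to get a.e.\ convergence of gradients and hence to identify the weak limit $v$ as a weak solution (Lemma \ref{conv_grad_qtp}); strong convergence and $v\in\N^-$ then follow from the Brezis--Lieb lemma for convex functions combined with the uniqueness of the projection $t_0v\in\N^-$: if strong convergence failed, the strict inequality $\int_\Omega\Psi(|\nabla t_0v|)<\liminf\int_\Omega\Psi(|\nabla t_0v_n|)$ together with $J_\lambda(t_0v_n)\le J_\lambda(v_n)$ would force $\alpha_\lambda^-<\alpha_\lambda^-$. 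If you want to salvage your route you would have to actually produce the concentrating family and the expansion giving $c_\lambda^-<c^*$; as the proposal stands, that step is missing.
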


\nd As a consequence of the  results just above we have the following multiplicity result.

\begin{theorem}\label{teorema3}
	Suppose $(\phi_{1}) - (\phi_{3})$ and $(H)$. Set $\Lambda =  \min \{\Lambda_1, \Lambda_2 \}$.  Then  for each $\lambda \in (0,  \Lambda)$, problem \eqref{eq1} admits at least two nonnegative weak solutions $u_{\lambda}, v_{\lambda} \in \w$ satisfying $ J_{\lambda}(u_{\lambda}) < 0 < J_{\lambda}(v_{\lambda})$. Furthermore, the function $u_{\lambda}$ is a ground state solution for each $\lambda \in (0,\Lambda)$.
\end{theorem}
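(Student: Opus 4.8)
The plan is to deduce Theorem~\ref{teorema3} directly from Theorems~\ref{teorema1} and~\ref{teorema2}, so the argument is essentially a bookkeeping of the two previous conclusions together with a short check that the solutions produced are genuinely distinct. First I would set $\Lambda := \min\{\Lambda_1,\Lambda_2\}$, where $\Lambda_1$ and $\Lambda_2$ are the thresholds furnished by Theorems~\ref{teorema1} and~\ref{teorema2} respectively. Then for every $\lambda \in (0,\Lambda)$ we simultaneously have $\lambda \in (0,\Lambda_1)$ and $\lambda \in (0,\Lambda_2)$, so Theorem~\ref{teorema1} yields a nonnegative ground state solution $u_\lambda \in \w$ with $J_\lambda(u_\lambda) < 0$, while Theorem~\ref{teorema2} yields a nonnegative weak solution $v_\lambda \in \w$ with $J_\lambda(v_\lambda) > 0$.

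Next I would observe that both $u_\lambda$ and $v_\lambda$ are nontrivial: since $J_\lambda(0) = 0$ and $J_\lambda(u_\lambda) < 0 < J_\lambda(v_\lambda)$, neither can coincide with the zero function. The same chain of inequalities $J_\lambda(u_\lambda) < 0 < J_\lambda(v_\lambda)$ shows $J_\lambda(u_\lambda) \neq J_\lambda(v_\lambda)$, hence $u_\lambda \neq v_\lambda$; thus \eqref{eq1} possesses at least two distinct nonnegative weak solutions for each $\lambda \in (0,\Lambda)$, and by construction they satisfy $J_\lambda(u_\lambda) < 0 < J_\lambda(v_\lambda)$. Finally, the ground state assertion for $u_\lambda$ is exactly the statement carried over from Theorem~\ref{teorema1}, which is valid on $(0,\Lambda_1) \supseteq (0,\Lambda)$.

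Since all the analytic work --- the Nehari minimization giving the negative energy solution, the concentration--compactness analysis restoring compactness at the critical level, and the minimization scheme giving the positive energy solution --- is already contained in Theorems~\ref{teorema1} and~\ref{teorema2}, there is no genuine obstacle left at this stage; the only point requiring (minimal) attention is the distinctness of $u_\lambda$ and $v_\lambda$, which as noted is immediate from the strict separation of their energy levels by $0$. In particular, no additional smallness restriction on $\lambda$ beyond $\Lambda = \min\{\Lambda_1,\Lambda_2\}$ is needed.
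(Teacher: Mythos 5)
Your proposal is correct and follows essentially the same route as the paper: both deduce the result directly from Theorems \ref{teorema1} and \ref{teorema2} with $\Lambda=\min\{\Lambda_1,\Lambda_2\}$, the only (cosmetic) difference being that you certify distinctness via the energy separation $J_\lambda(u_\lambda)<0<J_\lambda(v_\lambda)$ while the paper invokes $\mathcal{N}_\lambda^{+}\cap\mathcal{N}_\lambda^{-}=\emptyset$; both observations are immediate and equivalent in effect.
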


\nd In order to achieve our results we shall consider the Nehari manifold $\mathcal{N}_{\lambda}$ introduced in \cite{Nehari}. Here we also refer to \cite{Brown2,Brown1,Drabek,ST,SW} where the authors establish a precise description on the fibering maps.
\vskip.1cm

\nd A main point during this work is that due to the concave-convex nonlinearities present in \eqref{eq1}, the
Ambrosetti-Rabinowitz condition is not satisfied in general. Furthermore, when  $a,b$ are  functions that change sign, the well known nonquadraticity condition introduced by Costa-Magalh\~aes \cite{CM} does not work anymore. Those conditions are used to prove that certain Palais-Smale sequences are  bounded. In order to overcome this difficulty  we shall employ the Nehari manifold method.
\vskip.1cm

\nd In this work we employ the fibering maps, (which thanks to $(\phi_{1})- (\phi_{3})$ are of class $C^{2}$),
to split the Nehari manifold into two parts say $\mathcal{N}_{\lambda}= \mathcal{N}_{\lambda}^{+} \cup \mathcal{N}_{\lambda}^{-}$.
More specifically, in order to achieve our results we shall consider the Nehari manifold $\mathcal{N}_{\lambda}$ introduced in \cite{Nehari}. Here we also refer \cite{Brown2,Brown1,Drabek,ST,SW} where the authors establish a precisely description on the fibering maps. In the present work the main difficult is that $a$ and $b$ does not have defined sign, i.e, the functions $a,b$ can be change signs. Furthermore, the nonlinear operator $\Delta_{\Phi}$ is not homogeneous.  In order to overcome these difficulties we split the Nehari manifold into two parts $\mathcal{N}_{\lambda}= \mathcal{N}_{\lambda}^{+} \cup \mathcal{N}_{\lambda}^{-}$. Moreover, taking into account hypothesis $(\phi_{3})$, is possible to ensure that there exists an unique projection in each part $\mathcal{N}_{\lambda}^{-}, \mathcal{N}_{\lambda}^{+}$, see Section 2 ahead. In this way, we obtain that
problem \eqref{eq1} admits at least two positive solutions. These solutions are finding by standard minimization procedure in each part $\mathcal{N_{\lambda}}^{\pm}$. Thanks to hypothesis $(\phi_{3})$ is possible to guarantee that the fibering maps are in $C^{2}$ class which is essential in the Nehari method.

\nd We also have to deal with to the lack of compactness in $\w  \hookrightarrow   L^{\ell^{*}}(\Omega)$. In order to overcome the difficulty with compactness  we apply the concentration compactness principle, \cite{lions1,lions2,lions3,lions4}, together with variational methods as in \cite{BN-1}. In addition, the Brezis-Lieb Lemma for convex functions plays a crucial role.
\vskip.1cm

\nd It is worthwhile to mention that problem \eqref{eq1} admits at least two positive solutions thanks to the fact that the fibering maps give us an only projection in each of $\mathcal{N_{\lambda}}^{\pm}$, see Section 2 in the sequel. Those solutions are found by standard minimization procedure in each of $\mathcal{N_{\lambda}}^{\pm}$. The main tool here is to use hypothesis $(\phi_{3})$ showing that the fibering maps admits an unique critical point.

\nd The reader is also refered  to  \cite{G,AR,AGPeral,Fuk_1,GPeral,M,S,Willem} and references therein.

\nd The paper is organized as follows:  Section 2 is devoted to  proprieties of Nehari manifolds in our setting. In Section 3 we discuss on the fibering maps. Section 4 contains the proof of our main results. We use   $C, C_{1}, \ldots$ to denote positive constants.

\section{The Nehari manifold}

\nd The main goal in this section is to gather information on the critical points for the fibering maps associated to the energy functional $J_{\lambda}$. For an overview on the Nehari method we refer the reader to Willem \cite{Willem} and Brown et al \cite{Brown2,Brown1}.
\vskip.1cm

\nd The Nehari manifold associated to the functional $J_\lambda$ is
given by
\begin{equation}\begin{array}{rcl}\label{nehari}\mathcal{N}_\lambda&=&\{u\in W_0^{1,\Phi}(\Omega)\setminus \{0\}: \left<J'_{\lambda}(u),u\right>=0\}\\[2ex]
&=&\left\{u\in W_0^{1,\Phi}(\Omega)\setminus \{0\}:\Int\phi(|\nabla u|)|\nabla u|^2=\Int\lambda a(x)|u|^{q}+b(x)|u|^{\el}\right\}.
\end{array}\end{equation}
Later on, we shall prove that $ u \mapsto \langle J'_\lambda(u), u\rangle$ is  $C^{1}$  so that $\mathcal{N}_\lambda$ is a $\mathcal{C}^1$-submanifold of $W^{1,\Phi}_{0}(\Omega)$. Let $u\in \mathcal{N_\lambda}$. Using \eqref{nehari}, we infer that

\begin{equation}\label{eq2}
J_{\lambda}(u)=\Int \Phi(|\nabla u|) - \Fr{1}{q} \phi(|\nabla u|)|\nabla u|^2+\left(\Fr{1}{q}-\Fr{1}{\el}\right)b(x)|u|^{\el},
\end{equation}
or equivalently
\begin{equation}\label{eq3}
J_{\lambda}(u)=\Int \Phi(|\nabla u|) - \Fr{1}{\el} \phi(|\nabla u|)|\nabla u|^2-\lambda\left(\Fr{1}{q}-\Fr{1}{\el}\right)a(x)|u|^{q}.
\end{equation}

\nd As a first step we shall prove that $J_{\lambda}$ is coercive and bounded from below on $\mathcal{N}_\lambda$ which allows us to find a ground state that which gives us a critical point of $J_{\lambda}$. We have

\begin{prop}\label{coercive}
The functional $J_\lambda$ is coercive and bounded from below on $\mathcal{N}_\lambda$.
\end{prop}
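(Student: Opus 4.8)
The plan is to work with formula \eqref{eq3} rather than with the raw expression of $J_\lambda$, since on $\mathcal{N}_\lambda$ the top-order term $\frac{1}{\el}\Int b(x)|u|^\el$ has been eliminated in favour of a term involving only the subcritical quantity $\Int a(x)|u|^q$. Fix $u\in\mathcal{N}_\lambda$. The first step is to bound below the ``principal part'' $\Int\bigl(\Phi(|\nabla u|)-\frac{1}{\el}\phi(|\nabla u|)|\nabla u|^2\bigr)$. By Remark \ref{conseqphi3} we have $\phi(t)t^2\leq m\,\Phi(t)$ for $t>0$, hence $\Phi(t)-\frac1\el\phi(t)t^2\geq\bigl(1-\frac m\el\bigr)\Phi(t)$; since $m<\el$ by $(H)$, the constant $1-\frac m\el$ is strictly positive. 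Therefore
\[
J_\lambda(u)\ \geq\ \Bigl(1-\frac{m}{\el}\Bigr)\Int\Phi(|\nabla u|)\ -\ \lambda\Bigl(\frac1q-\frac1\el\Bigr)\Int a(x)|u|^q.
\]

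The second step is to control the concave term $\Int a(x)|u|^q$ by the norm, with an exponent strictly smaller than the growth of $\Int\Phi(|\nabla u|)$. Since $a\in L^\infty(\Omega)$, $\int_\Omega a(x)|u|^q\leq\|a\|_\infty\|u\|_{L^q}^q$; using the continuous embeddings $\w\hookrightarrow L_\Phi(\Omega)\hookrightarrow L^\ell(\Omega)$ and then $L^\ell\hookrightarrow L^q$ on the bounded domain $\Omega$ (valid because $q<\ell$ by $(H)$), we get $\|u\|_{L^q}^q\leq C\|u\|^q$. On the other side, the standard Orlicz estimate deduced from $\ell\leq\phi(t)t^2/\Phi(t)$ (Remark \ref{conseqphi3}) gives, for $\|u\|$ large, a lower bound of the form $\Int\Phi(|\nabla u|)\geq c\|u\|^{\ell}$ (and for $\|u\|$ bounded one may instead keep $\Int\Phi(|\nabla u|)\geq 0$, which already suffices to get boundedness below on that range). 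Combining,
\[
J_\lambda(u)\ \geq\ c\Bigl(1-\frac m\el\Bigr)\|u\|^{\ell}\ -\ \lambda\Bigl(\frac1q-\frac1\el\Bigr)\|a\|_\infty C\,\|u\|^{q}\qquad\text{for }\|u\|\text{ large.}
\]
Because $q<\ell$, the right-hand side is a function of $\|u\|$ that tends to $+\infty$ as $\|u\|\to\infty$; hence $J_\lambda$ is coercive on $\mathcal{N}_\lambda$, and being continuous on the whole space it is in particular bounded below on the complement of a large ball, so it is bounded below on all of $\mathcal{N}_\lambda$.

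The step I expect to require the most care is the passage from the modular $\Int\Phi(|\nabla u|)$ to a genuine power of the norm $\|u\|$, since $\Phi$ is only an $N$-function and not homogeneous; this is exactly where $(\phi_3)$, via the two-sided estimate $\ell\le\phi(t)t^2/\Phi(t)\le m$ in Remark \ref{conseqphi3}, is used to produce the comparison $\min\{\|u\|^\ell,\|u\|^m\}\lesssim\Int\Phi(|\nabla u|)\lesssim\max\{\|u\|^\ell,\|u\|^m\}$ (the precise form of these Orlicz inequalities is standard, cf. Fukagai et al.). Once that dictionary between modular and norm is in place, the rest is the elementary observation that a function behaving like $t^\ell$ dominates one behaving like $t^q$ at infinity when $q<\ell$, which is guaranteed by hypothesis $(H)$.
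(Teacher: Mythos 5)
Your proposal is correct and follows essentially the same route as the paper: start from identity \eqref{eq3} on $\mathcal{N}_\lambda$, use the two-sided bound $\ell\le\phi(t)t^2/\Phi(t)\le m$ of Remark \ref{conseqphi3} together with $m<\el$ to make the principal part a positive multiple of the modular, convert the modular to $\min\{\|u\|^\ell,\|u\|^m\}$, and dominate the concave term by $C\|u\|^q$ via the embedding into $L^q$. The only cosmetic difference is that you keep $\bigl(1-\tfrac{m}{\el}\bigr)\Int\Phi(|\nabla u|)$ where the paper keeps $\bigl(\tfrac1m-\tfrac1{\el}\bigr)\Int\phi(|\nabla u|)|\nabla u|^2$ before passing to the modular; these are interchangeable.
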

\begin{proof}
In view of \eqref{conseqphi3} we get
\begin{equation}\label{eq4}
J_{\lambda}(u)\geq\left(\Fr{1}{m}-\Fr{1}{\el} \right)\Int\phi(|\nabla u|)|\nabla u|^2+\lambda\left(\Fr{1}{\el}-\Fr{1}{q}\right)\Int a(x)|u|^{q}.\nonumber
\end{equation}

\nd Now due the fact that
$$\min\{||u||^\ell,||u||^m\}\leq\Int \Phi(|\nabla u|)\leq \Fr{1}{\ell}\Int\phi(|\nabla u|)|\nabla u|^2,$$
 we  conclude that
\begin{equation}\label{functional1}\begin{array}{rcl}
J_{\lambda}(u)&\geq&\ell\left(\Fr{1}{m}-\Fr{1}{\el} \right)\min\{||u||^\ell,||u||^m\}+\lambda\left(\Fr{1}{\el}-\Fr{1}{q}\right)\Int a(x)|u|^{q}\\[3ex]
&\geq& \ell\left(\Fr{1}{m}-\Fr{1}{\el} \right)\min\{||u||^\ell,||u||^m\}+\lambda\left(\Fr{1}{\el}-\Fr{1}{q}\right)||a^+||_{\infty}\Int |u|^{q}.
\end{array}\end{equation}

\nd Since,  $W_0^{1,\Phi}(\Omega)\hookrightarrow L_{\Phi}(\Omega)\hookrightarrow L^\ell(\Omega)\hookrightarrow L^{q}(\Omega),$ there is  $C=C(q,\Phi)>0$ such that
$$J_{\lambda}(u)\geq \ell\left(\Fr{1}{m}-\Fr{1}{\el} \right)\min\{||u||^\ell,||u||^m\}+\lambda\left(\Fr{1}{\el}-\Fr{1}{q}\right)||a^+||_{\infty}C||u||^{q}.$$
Thus  $J_\lambda$ is coercive and bounded from from below on $\mathcal{N}_\lambda$. This ends the proof.
\end{proof}
\vskip.2cm

\nd At this moment we shall define the fibering map $\gamma_u: [0,+\infty)\to \mathbb{R}$  by
$$\gamma_u(t):=J_{\lambda}(tu)=\Int\Phi(t|\nabla u|)-\Fr{\lambda t^{q}}{q}a(x)|u|^{q}-\Fr{t^{\el}}{\el}b(x)|u|^{\el}.$$
Fibering maps have been considered together the Nehari manifold in order to ensure the existence of critical points for $J_{\lambda}$. In particular, for concave-convex nonlinearities it is important to know   the geometry for $\gamma_{u}$. Here we refer the reader to \cite{Brown2,Brown1,wu,wu3}.
\vskip.1cm

\nd Now we point out that $\gamma_{u}$ is of class $C^{1}$  thanks to  $(\phi_{1}) - (\phi_{2})$. More specifically, we obtain
\begin{equation*}\label{priderivada}
\gamma'_u(t)= \Int t\phi(t|\nabla u|)|\nabla u|^2-\lambda t^{q-1}a(x)|u|^{q}-t^{\el-1} b(x)|u|^{\el}.
\end{equation*}
It is easy to see that $tu\in\mathcal{N}_\lambda$ if and only if $\gamma'_u(t)=0.$ Therefore, $u\in\mathcal{N}_\lambda$ if and  only if $\gamma'_u(1)=0.$ In other words, it is sufficient to find stationary points of fibering maps in order to get critical points for $J_{\lambda}$ on $\mathcal{N}_\lambda$.
Notice also that, using  $(\phi_{3})$, we deduce that $\gamma_{u}$ is of class $C^{2}$  with second derivative given by
\begin{eqnarray}\label{segderivada}
	\gamma''_u(t)&= &\Int t\phi'(t|\nabla u|)|\nabla u|^3+\phi(t|\nabla u|)|\nabla u|^2-\lambda (q-1)t^{q-2}a(x)|u|^{q}dx\nonumber\\
	&-& (\el-1)t^{\el-2}\int_\Omega b(x)|u|^{\el}.\nonumber
\end{eqnarray}

As was pointed by Brown et al \cite{Brown2,Brown1} it is natural to divide $\N$ into three sets
$$\N^+:=\{u\in \N:\gamma''_u(1)>0\};$$
$$\N^-:=\{u\in \N:\gamma''_u(1)<0\};$$
$$\N^0:=\{u\in \N:\gamma''_u(1)=0\}.$$
Here we mention that $\N^+,~\N^-,~\N^0$ corresponds to critical points of minimum, maximum and inflection points, respectively. Here we refer the reader also to Tarantello \cite{tarantello}.

\begin{rmk}\label{gamma''}
It is not hard to verify that
\begin{equation}\label{eq5}
\begin{array}{rcl}
\gamma''_u(1)&=& \Int \phi'(|\nabla u|)|\nabla u|^3+(2-q)\phi(|\nabla u|)|\nabla u|^2-(\el-q)b(x)|u|^{\el}\\[2ex]
&=&
\Int \phi'(|\nabla u|)|\nabla u|^3+(2-\el)\phi(|\nabla u|)|\nabla u|^2-\lambda(q-\el)a(x)|u|^{q}.
\end{array}
\end{equation}
holds true for any $u\in \N$. Here was used identities \eqref{eq2} and \eqref{eq3}.
\end{rmk}

Now we shall prove that $\N$ is a $C^1$-manifold. This step is crucial in our argument in order to get the main result in this work.

\begin{lem}\label{c1} Suppose $(\phi_{1}) - (\phi_{3})$.  Then there exists  $\lambda_{1} > 0$  such that
\begin{enumerate}
\item $\N^0=\emptyset$.
\item $\N=\N^+\dot{\cup}\N^-$ is a $C^1$-manifold.
\end{enumerate}
\nd  for each $\lambda \in (0, \Lambda_{1})$.
\end{lem}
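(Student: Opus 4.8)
The strategy is the standard Nehari dichotomy argument adapted to the non-homogeneous $\Phi$-Laplacian setting, where hypothesis $(H)$ plays the role of the exponent inequalities that guarantee the sign structure of $\gamma_u''(1)$. First I would assume, for contradiction, that $\N^0 \neq \emptyset$ for every small $\lambda$, i.e. there is $u \in \N^0$. Then simultaneously $\langle J'_\lambda(u), u\rangle = 0$ and $\gamma_u''(1) = 0$. Using the two expressions for $\gamma_u''(1)$ in Remark~\ref{gamma''} together with Remark~\ref{conseqphi3}, I would bound the terms $\Int \phi'(|\nabla u|)|\nabla u|^3$ between $(\ell-2)\Int\phi(|\nabla u|)|\nabla u|^2$ and $(m-2)\Int\phi(|\nabla u|)|\nabla u|^2$. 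This converts $\gamma_u''(1) = 0$ into two inequalities: one giving an upper bound on $\Int b(x)|u|^\ell$ in terms of $\|u\|$ (via $\min\{\|u\|^\ell, \|u\|^m\} \le \Int \Phi(|\nabla u|)$ and the relation $\Int\phi(|\nabla u|)|\nabla u|^2 \le m \Int\Phi(|\nabla u|)$), and another giving a lower bound on $\lambda \Int a(x)|u|^q$ bounded below in terms of $\|u\|^q$ up to Sobolev embedding constants.

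Next, combining these two with the Nehari identity itself, one obtains a chain of inequalities of the form $C_1 \min\{\|u\|^\ell,\|u\|^m\} \le \lambda C_2 \|u\|^q \le \lambda C_3 \|a^+\|_\infty \|u\|^q$ on one hand, and a reversed estimate $\|u\|^{\ell^*} \ge C_4 \|u\|^\ell$ (or $\|u\|^m$) using the continuous embedding $\w \hookrightarrow L_{\Phi_*}(\Omega) \hookrightarrow L^{\ell^*}(\Omega)$ and $\|b^+\|_\infty$ on the other. The exponent arithmetic in $(H)$, namely $1 < q < \frac{\ell(\el-m)}{\el-\ell} \le \ell \le m < \el$, is precisely what makes these two estimates incompatible once $\lambda$ is small: from the first $\|u\|$ is bounded above by a power of $\lambda$ (positive power, since $q < \ell \le m$), and from the second $\|u\|$ is bounded below by a positive constant independent of $\lambda$. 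Choosing $\Lambda_1 > 0$ small enough that the $\lambda$-dependent upper bound falls below the fixed lower bound yields the contradiction, proving $\N^0 = \emptyset$ for $\lambda \in (0,\Lambda_1)$. This gives part (1), and the decomposition $\N = \N^+ \dot\cup \N^-$ is then immediate from the definitions since the three sets partition $\N$ and the middle one is empty.

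For part (2), that $\N = \N^+ \dot\cup \N^-$ is a $C^1$-manifold: define $\psi_\lambda(u) := \langle J'_\lambda(u), u\rangle$, which is $C^1$ on $\w \setminus \{0\}$ by the regularity already noted (the fibering maps are $C^1$, indeed $C^2$, thanks to $(\phi_1)$–$(\phi_3)$), so $\N = \psi_\lambda^{-1}(0) \setminus \{0\}$. It suffices to check $0$ is a regular value, i.e. $\langle \psi_\lambda'(u), u\rangle \neq 0$ for $u \in \N$. A direct computation gives $\langle \psi_\lambda'(u), u\rangle = \gamma_u''(1)$ (since $\gamma_u'(t) = t^{-1}\langle J'_\lambda(tu), tu\rangle$ so $\gamma_u''(1) = \langle \psi_\lambda'(u), u\rangle$ whenever $\gamma_u'(1) = 0$). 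Since $\N^0 = \emptyset$, we have $\gamma_u''(1) \neq 0$ for all $u \in \N$, hence $0$ is a regular value and $\N$ is a $C^1$-submanifold of codimension one; the decomposition into $\N^+$ and $\N^-$ according to the sign of $\gamma_u''(1)$ is a decomposition into relatively open (hence themselves $C^1$) pieces, and it is disjoint by definition.

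The main obstacle is the first step: carefully extracting, from the single scalar equation $\gamma_u''(1) = 0$ combined with the Nehari constraint, the two opposing bounds on $\|u\|$ while keeping track of which constants depend on $\lambda$ and which on the fixed data ($\|a^+\|_\infty$, $\|b^+\|_\infty$, the embedding constants, $\ell$, $m$, $q$). The non-homogeneity of $\Delta_\Phi$ means one cannot simply scale; instead every estimate must be bracketed between the $\ell$- and $m$-behaviours via Remark~\ref{conseqphi3}, and the case distinction $\|u\| \le 1$ versus $\|u\| > 1$ in $\min\{\|u\|^\ell, \|u\|^m\}$ has to be handled so that the exponent inequality in $(H)$ still closes the argument uniformly.
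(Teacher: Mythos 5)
Your proposal is correct and follows essentially the same route as the paper: for part (1) you use the two expressions for $\gamma''_u(1)$ from Remark \ref{gamma''}, bracket $\int_\Omega\phi'(|\nabla u|)|\nabla u|^3$ via $(\phi_3)$, and derive a $\lambda$-independent lower bound on $\|u\|$ (from the $b$-term and the Sobolev embedding into $L^{\el}$) together with an upper bound of order $\lambda^{1/(\alpha-q)}$ (from the $a$-term), which clash once $\lambda<\Lambda_1$; part (2) is the same regular-value argument for $G(u)=\left<J'_\lambda(u),u\right>$ with $\left<G'(u),u\right>=\gamma''_u(1)\neq 0$ on $\N$. The only slip is cosmetic: in your first parenthetical the inequality actually needed there is $\ell\Phi(t)\leq\phi(t)t^2$ rather than $\phi(t)t^2\leq m\Phi(t)$, but your final chain of estimates uses the correct direction.
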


\proof First of all, we shall consider the proof for item (1). Arguing by contradiction we assume that $\N^0\neq\emptyset.$ Let $u\in\N^0$ be a fixed function. Clearly, we have $\gamma'_u(1)=\gamma''_u(1)=0.$ By \eqref{nehari} and \eqref{eq5}, we obtain, $$0=\gamma''_u(1)=\Int (2-q)\phi(|\nabla u| )|\nabla u|^2+\phi'(|\nabla u|)||\nabla u|^3+(q-\el)b|u|^{\el}.$$ Now taking into account \eqref{conseqphi3} we have that
$$(\ell-q)\Int \phi(|\nabla u| )|\nabla u|^2\leq(\el-q)||b^+||_{\infty}\|u\|_{\el}^{\el}\leq (\el-q)S_{\el}||b^+||_{\infty}||u||^{\el},$$ where $S_{\ell^*}$ is a best constant in the embedding $W_0^{1,\Phi}(\Omega)\hookrightarrow L^{\el}(\Omega).$ On the other hand,
$$(\ell-q)\Int \phi(|\nabla u| )|\nabla u|^2dx\geq\ell(\ell-q)\Int \Phi(|\nabla u|)dx\geq \ell(\ell-q)\min\{||u||^\ell,||u||^m\}. $$

Taking into account the estimates just above we observe that

$$\ell(\ell-q)\min\{||u||^\ell,||u||^m\}\leq(\el-q)S_{\el}||b^+||_{\infty}||u||^{\el}.$$
Therefore, we obtain
$$||u||^{\el}\geq \Fr{\ell(\ell-q)}{(\el-q)S_{\el}||b^+||_{\infty}}\min\{||u||^\ell,||u||^m\}=\left[\Fr{\ell(\ell-q)}{(\el-q)S_{\el}||b^+||_{\infty}}\right]||u||^{\alpha},$$
where we put $\alpha=\ell$ for any $\|u\|\geq 1$ and $\alpha=m$ for any $\|u\|\leq 1$. These facts imply that

\begin{equation}\label{des1.0}
	||u||\geq \left[\Fr{\ell(\ell-q)}{(\el-q)S_{\el}||b^+||_{\infty}}\right]^{\frac{1}{\el-\alpha}}.
\end{equation}

On the other hand, using \eqref{conseqphi3}, \eqref{eq5}, %$(\phi_3)$
and the Holder inequality (for Sobolev space), we obtain
$$(\el-m)\Int \phi(|\nabla u| )|\nabla u|^2\leq\lambda(\el-q)||a^+||_{(\frac{\ell}{q})'}\|u\|_{\ell}^{q}\leq\lambda(\el-q)S_{\ell}||a^+||_{(\frac{\ell}{q})'}||u||^{q},$$
where $S_{\ell}$ is a best constant in the embedding $W_0^{1,\Phi}(\Omega)\hookrightarrow L^{\ell}(\Omega).$ Using the same ideas discussed in the previous case we infer that
$$\ell(\el-m)\min\{||u||^\ell,||u||^m\}\leq \lambda(\el-q)S_{\ell}||a^+||_{(\frac{\ell}{q})'}||u||^{q}.$$
Hence, the last assertion says that
$$\Fr{\ell(\el-m)}{(\el-q)S_{\ell}||a^+||_{(\frac{\ell}{q})'}}||u||^\alpha=\Fr{\ell(\el-m)}{(\el-q)S_{\ell}||a^+||_{(\frac{\ell}{q})'}}\min\{||u||^\ell,||u||^m\}\leq \lambda||u||^{q}.$$
In this way, we mention that
\begin{equation}\label{des2}
\left[\Fr{\ell(\el-m)}{(\el-q)S_{\ell}||a^+||_{(\frac{\ell}{q})'}}\right]||u||^{\alpha-q}\leq \lambda.
\end{equation}
Under these conditions, using \eqref{des1.0} and \eqref{des2}, we get a contradiction for any
\begin{equation}\label{Lambda1}
\lambda> \left[\Fr{\ell(\ell-q)}{(\el-q)S_{\el}||b^+||_{\infty}}\right]^{\frac{\alpha-q}{\el-\alpha}}\left[\Fr{\ell(\el-m)}{(\el-q)S_{\ell}||a^+||_{(\frac{\ell}{q})'}}\right]=:
 \lambda_1.
\end{equation}
This finishes the proof of item $(1)$.
\vskip.1cm

Now we shall prove the item $(2)$. Without any loss of generality that we take $u\in \N^+$. Define $G(u):=\left<J'_\lambda(u),u\right>.$ It is no hard to see that
 $$G'(u)=J''_\lambda(u)\cdot(u,u)+\left<J'_\lambda(u),u\right>=\gamma''_u(1)>0,\,\,\ \forall u\in\N^+.$$
Hence, $0$  is a regular value for the functional $G$. Consequently, we see that $\N^+$ is a $C^1$-manifold. Similarly, we should be show that $\N^-$ is a $C^1$-manifold. As a consequence the proof of item $(2)$ follows due the fact that $\N^{0}=\emptyset$ for any $\lambda > 0$ small enough. This completes the proof. \hfill\cqd

\nd Now we are in a position  to prove that any critical point for $J_{\lambda}$ on $\mathcal{N}_\lambda$ is a free critical point, i.e, is a critical point in the whole space $W^{1,\Phi}_{0}(\Omega)$. More precisely, we shall consider the following result

\begin{lem}\label{criticalpoint}
Suppose $(\phi_{1}) - (\phi_{3})$. Let $u_0$  be a local minimum (or local maximum) for $J_\lambda$ on $\mathcal{N}_{\lambda}$. Then
$u_0$ is a critical point of $J_\lambda$ on $\w$ for each $\lambda<\lambda_1$.
\end{lem}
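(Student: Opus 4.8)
The plan is to apply the Lagrange multiplier rule and then exploit that $\N^0=\emptyset$ when $\lambda<\lambda_1$. Recall from Lemma \ref{c1} that for such $\lambda$ the set $\mathcal{N}_\lambda$ is a $C^1$-manifold, realized as the zero set of $G(u):=\left<J'_\lambda(u),u\right>$, with $0$ a regular value of $G$; in particular $G'(u)\neq 0$ for every $u\in\mathcal{N}_\lambda$, and $G\in C^1(\w,\mathbb{R})$ thanks to $(\phi_3)$. Since $J_\lambda\in C^1(\w,\mathbb{R})$ and $u_0$ is a local minimum (or local maximum) of $J_\lambda$ constrained to the $C^1$-manifold $\mathcal{N}_\lambda$, the Lagrange multiplier theorem provides $\theta\in\mathbb{R}$ with $J'_\lambda(u_0)=\theta\,G'(u_0)$ in $\w^*$.

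Next I would test this identity on $u_0$ itself. On one hand $\left<J'_\lambda(u_0),u_0\right>=0$ because $u_0\in\mathcal{N}_\lambda$. On the other hand, the computation carried out in the proof of item (2) of Lemma \ref{c1} gives $\left<G'(u_0),u_0\right>=\gamma''_{u_0}(1)$. Combining the two, $0=\theta\,\gamma''_{u_0}(1)$.

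To conclude, since $\lambda<\lambda_1$, item (1) of Lemma \ref{c1} says $\N^0=\emptyset$, hence $\gamma''_{u_0}(1)\neq 0$; therefore $\theta=0$, which forces $J'_\lambda(u_0)=0$ in $\w^*$. Thus $u_0$ is a critical point of $J_\lambda$ on the whole space $\w$, as claimed.

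The only point requiring care is the applicability of the Lagrange multiplier rule in this infinite-dimensional and merely $C^1$ (not $C^2$) setting: it rests on $G$ being of class $C^1$ with $G'(u_0)\neq 0$, both of which are supplied by Lemma \ref{c1} (the former being precisely where $(\phi_3)$ is used). Everything else reduces to the routine evaluation above, and no further use of the structure of $\Delta_\Phi$ or of the nonlinearity is needed.
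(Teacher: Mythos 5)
Your proposal is correct and follows essentially the same route as the paper: the paper also defines $\theta(u)=\left<J_\lambda'(u),u\right>$ (your $G$), invokes the Lagrange multiplier rule to get $J_\lambda'(u_0)=\mu\,\theta'(u_0)$, evaluates at $u_0$ to obtain $0=\mu\left<\theta'(u_0),u_0\right>=\mu\,\gamma''_{u_0}(1)$, and concludes $\mu=0$ from $\N^0=\emptyset$. No substantive difference.
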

\proof Let $u_{0} \in \N$ be a local maximum or mininum for the functional $J_{\lambda}$ on $\N$. Without any loss of generality we assume that $u_0$ is a local minimum. Define the function
$$\theta(u)=\left<J_\lambda'(u),u\right>=\Int\phi(|\nabla u|)|\nabla u|^2-\lambda a(x) |u|^{q}-b(x)|u|^{\el}.$$
It is easy to see that $u_0$ is a solution for the minimization problem
\begin{equation}\label{ll}
\left\{\begin{array}{rcl}
\min J_\lambda(u),\\
\theta(u)=0
\end{array}\right.
\end{equation}
Arguing as in Carvalho et al \cite{JME}, we infer that
$$\left<\theta'(u),v\right>=\Int \phi'(|\nabla u|)|\nabla u|^2\nabla v+2\phi(|\nabla u|)\nabla u\nabla v-\lambda qa(x)|u|^{q-1}v-\el b(x)|u|^{\el-1}v$$ holds true for any $u, v  \in W^{1,\Phi}_{0}(\Omega)$. As a consequence, taking $u = v = u_{0}$, we observe that $$
\begin{array}{rcl}\left<\theta'(u_0),u_0\right>&=&\Int \phi'(|\nabla u_0|)|\nabla u_0|^3+2\phi(|\nabla u_0|)|\nabla u_0|^2\\[2ex]
&-&\Int\lambda qa(x){|u_0|}^{q}-\el b(x){|u_0|}^{\el}.\end{array}$$
Moreover, using the fact that $u_0\in \N^{+}$, \eqref{nehari} and \eqref{eq5}, we deduce that
$$\begin{array}{rcl}
\left<\theta'(u_0),u_0\right>&=&\Int \phi'(|\nabla u_0|)|\nabla u_0|^3+(2-q)\phi(|\nabla u_0|)|\nabla u_0|^2\\[2ex]
&-&(\el-q)\Int b(x){|u_0|}^{\el}=\gamma''_u(1)>0.\end{array}$$

In view of Lemma \ref{c1} we mention that problem \eqref{ll} admits at least one solution in the following form
$$J_\lambda'(u_0)=\mu\theta'(u_0)$$
where $\mu\in\R$ is given by the Lagrange Multipliers Theorem. As a consequence
\begin{equation*}
\mu\left<\theta'(u_0),u_0\right> \,=\, \left<J_\lambda'(u_0),u_0\right> \,=\,0.
\end{equation*}
 Furthermore, we know that  $\left<\theta'(u_0),u_0\right>\neq 0.$ This assertion implies that $\mu=0$, i.e, $u_{0}$ is a critical point for $J_{\lambda}$ on $W^{1,\Phi}_{0}(\Omega)$. The proof for this lemma is now complete.\hfill\cqd

\section{Analysis of the Fibering Maps}

In this section we give a complete description on the geometry for the fibering maps associated
to the problem \eqref{eq1}. Let $u \in \w\backslash\{0\}$ be a fixed function.  To the best our knowledge the essential nature of fibering maps is
determined by the signs of $\Int a(x)|u|^q$ and $\Int b(x)|u|^{\el}$.
Throughout this section is useful to consider the auxiliary function of $C^{1}$ class given by
$$m_u(t)=t^{2-q}\Int \phi(t|\nabla u|)|\nabla u|^2-t^{\el-q}\Int b(x)|u|^{\el}, t \geq 0, u \in \w.$$

Now we shall consider a result comparing points $t u \in \mathcal{N}_\lambda$ with the the function $m_{u}$. More precisely, we have

\begin{lem}\label{m_uegamma_u}
Let $t>0$ be fixed. Then $tu\in \N$ if and only if $t$ is a solution of
\begin{equation*}
m_u(t)=\lambda\Int a(x) |u|^{q}.
\end{equation*}

\proof Fix $t>0$ in such may that $tu\in\N$. Then
$$t^2\Int\phi(\nabla(tu))|\nabla u|^2=t^{q}\lambda\Int a(x)|u|^{q}+t^{\el}\Int b(x)|u|^{\el}.$$
The identity just above is equivalent to
$$t^2\Int\phi(|\nabla t u |)|\nabla u|^2- t^{\el}\Int b(x)|u|^{\el}= t^{q}\lambda\Int a(x)|u|^{q}.$$
Multiplying the above expression by $t^{-q},$ we get
$$t^{2-q}\Int\phi(|\nabla t u|)|\nabla u|^2 -t^{\el-q}\Int b(x)|u|^{\el}= \lambda\Int a(x)|u|^{q}.$$
In view the definition of $m_u$ we obtain the desired result. This ends the proof. \hfill\cqd
\end{lem}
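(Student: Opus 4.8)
The plan is to prove the equivalence $tu \in \mathcal{N}_\lambda \iff m_u(t) = \lambda\Int a(x)|u|^q$ by a direct chain of algebraic manipulations starting from the definition of the Nehari manifold. This is a purely computational lemma with no genuine obstacle; the only point requiring minor care is correctly tracking how the powers of $t$ factor out of each term of $\langle J'_\lambda(tu), tu\rangle$, since the operator $\Delta_\Phi$ is not homogeneous and so the gradient term does not scale as a pure power of $t$ — it retains the form $t^2\phi(t|\nabla u|)|\nabla u|^2$ rather than collapsing to $t^p$ times something.

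First I would recall from \eqref{nehari} that $tu \in \mathcal{N}_\lambda$ means precisely $\langle J'_\lambda(tu), tu\rangle = 0$, i.e.
$$
\Int \phi(|\nabla(tu)|)|\nabla(tu)|^2 = \lambda \Int a(x)|tu|^q + \Int b(x)|tu|^{\el}.
$$
Writing $|\nabla(tu)| = t|\nabla u|$ and $|tu| = t|u|$ (for $t>0$), the left-hand side equals $t^2\Int \phi(t|\nabla u|)|\nabla u|^2$, the first term on the right equals $\lambda t^q \Int a(x)|u|^q$, and the second equals $t^{\el}\Int b(x)|u|^{\el}$. So the Nehari condition reads
$$
t^2\Int \phi(t|\nabla u|)|\nabla u|^2 = \lambda t^q \Int a(x)|u|^q + t^{\el}\Int b(x)|u|^{\el}.
$$

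Next I would isolate the $b$-term on the left, obtaining $t^2\Int \phi(t|\nabla u|)|\nabla u|^2 - t^{\el}\Int b(x)|u|^{\el} = \lambda t^q \Int a(x)|u|^q$, and then multiply both sides by $t^{-q}$ (legitimate since $t>0$). This yields exactly
$$
t^{2-q}\Int \phi(t|\nabla u|)|\nabla u|^2 - t^{\el-q}\Int b(x)|u|^{\el} = \lambda \Int a(x)|u|^q,
$$
whose left-hand side is the definition of $m_u(t)$. Since every step is an equivalence (multiplication by the nonzero quantity $t^{-q}$ and rearrangement of terms), the chain is reversible, which gives the converse implication for free. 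I would close by noting that this reformulation is what makes the subsequent analysis of the fibering maps tractable: the parameter $\lambda$ has been separated onto one side, so locating points of $\mathcal{N}_\lambda$ on the ray $\{tu : t>0\}$ reduces to studying the level sets of the single scalar function $m_u$.
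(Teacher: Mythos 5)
Your proposal is correct and follows essentially the same route as the paper: expand $\langle J'_\lambda(tu),tu\rangle=0$, use $|\nabla(tu)|=t|\nabla u|$, isolate the $b$-term, and multiply by $t^{-q}$. Your explicit remark that every step is reversible (so the converse comes for free) is a small improvement in presentation over the paper, which only writes out the forward direction.
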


The next lemma is a powerful tool in order to get a precise information around the function $m_{u}$ and the fibering maps. More precisely, we shall consider the following result

\begin{lem}\label{m_u-comp}
\begin{enumerate} \item Suppose that $\Int b(x)|u|^{\el}\leq 0$ holds. Then we obtain $\displaystyle m_{u}(0) := \lim_{t \rightarrow 0} m_{u}(t)=0 , m_{u}(\infty) :=  \lim_{t \rightarrow \infty} m_{u}(t) = \infty$ and $m'_u(t)>0$ for any $t>0$.
\item Suppose $\Int b(x)|u|^{\el}>0$ and $(H)$. Then
 there exists an only critical point for $m_{u}$, i.e, there is an only point $\tilde{t}>0$ in such way that $m'_u(\tilde{t})=0$.
 Furthermore, we know that $\tilde{t} > 0$ is a global maximum point for $m_{u}$ and $m_{u}(\infty) = - \infty$.
\end{enumerate}
\proof
 Initially we observe that
 \begin{eqnarray}
	 m'_u(t)& = & (2-q)t^{1-q}\Int \phi(t|\nabla u|)|\nabla u|^2+t^{2-q}\Int \phi'(|\nabla (tu)|)|\nabla u|^3\nonumber\\
	  & - &(\el-q)t^{\el-q-1}\Int b(x)|u|^{\el}.\nonumber
 \end{eqnarray}

\nd Now we shall prove the item (1). Additionally, taking into account Remark \ref{conseqphi3} it is easy to verity that
 \begin{equation}\label{ee2}
 \ell-2\leq\Fr{\phi'(t)t}{\phi(t)}\leq m-2, \,\,\mbox{for any} \,\,\,t \geq 0. \end{equation}
 As a consequence we see that
$$\begin{array}{rcl}
m'_u(t)&\geq& (2-q)t^{1-q}\Int \phi(t|\nabla u|)|\nabla u|^2\\[2ex]&+&\Int (\ell-2)t^{1-q}\phi(|\nabla tu|)|\nabla u|^2-(\el-q)t^{\el-q-1}b(x)|u|^{\el}\\[2ex]
&=&\Int (\ell-q)t^{1-q} \phi(t|\nabla u|)|\nabla u|^2-(\el-q)t^{\el-q-1}b(x)|u|^{\el}>0.
\end{array}$$ Hence the function $m_u$ is increasing for any $t>0,$ i.e, we have $m^{'}_{u}(t) > 0$ for any $t > 0$.
Moreover, we shall prove that $m_{u}(0) = 0$. In fact, using \cite[Lemma 2.1]{Fuk_1}, we deduce that
\begin{equation}\label{ae}
\Int t^{m-q} \phi(|\nabla u|)|\nabla u|^2-t^{\el-q}b(x)|u|^{\el}\leq m_u(t),
\end{equation}
and
\begin{equation}\label{aee}
m_u(t)\leq \Int t^{\ell-q} \phi(|\nabla u|)|\nabla u|^2-t^{\el-q}b(x)|u|^{\el}, t \in [0, 1].
\end{equation}
Taking the limits in estimates \eqref{ae} and \eqref{aee} we get $\displaystyle \lim_{t \rightarrow 0^+} m_{u}(t) = 0$.
Furthermore, arguing as in the proof \eqref{ae}, we obtain
\begin{equation}
m_{u}(t) \geq t^{\ell-q}  \int_{\Omega} \phi(|\nabla u|)|\nabla u|^2 - t^{\el-q} \Int b(x)|u|^{\el}, t \geq 1.\nonumber
\end{equation}
Due the fact that $\ell > q $ the last assertion implies that $m_{u}(\infty) = \displaystyle \lim_{t \rightarrow \infty} m_{u}(t) = \infty$.
This finishes the proof of item (1).

Now we shall prove the item $(2)$.  As first step we mention that $m_u$ is increasing for $ t \in (0,1)$ and $\ds\lim_{t\to\infty}m_u(t)=-\infty$. More specifically, using one more time \eqref{ee2} we get

$$\begin{array}{rcl}
m'_u(t)&\geq&
\Int (\ell-q)t^{1-q}\phi(t|\nabla u|)|\nabla u|^2-(\el-q)t^{\el-q-1}b(x)|u|^{\el}\\[2ex]
&\geq & \Int(\ell-q)t^{1-q}t^{m-2} \phi(|\nabla u|)|\nabla u|^2-(\el-q)t^{\el-q-1}b(x)|u|^{\el}\\[2ex]
&=& \dfrac{1}{t}\Int(\ell-q)t^{m-q} \phi(|\nabla u|)|\nabla u|^2-(\el-q)t^{\el-q}b(x)|u|^{\el}.
\end{array}$$
Since $m<\el$ we mention that $m'_u(t)>0$ for any $t \in (0, 1)$. Furthermore, arguing as above we see also that
$$m_u(t)\leq \Int t^{m-q}\phi(|\nabla u|)|\nabla u|^2-t^{\el-q}b(x)|u|^{\el}, t \geq 1.$$
Therefore, we deduce that $\ds\lim_{t\to\infty}m_u(t)=-\infty$ where was used the fact that $m<\el$.

Now the main goal in this proof is to show that $m_u$ has an unique critical point $\tilde{t}>0.$
Note that, we have $m^{\prime}_u(t)=0$ if and only if $$(2-q)t^{2-\el}\Int \phi(t|\nabla u|)|\nabla u|^2+t^{3-\el}\Int \phi'(|\nabla (tu)|)|\nabla u|^3=(\el-q)\Int b(x)|u|^{\el}.$$
Define the auxiliary function $\eta_{u}: (0,\infty) \rightarrow \mathbb{R}$ given by
$$\eta_u(t)=(2-q)t^{2-\el}\Int \phi(t|\nabla u|)|\nabla u|^2+t^{3-\el}\Int \phi'(|\nabla (tu)|)|\nabla u|^3.$$
 Here we emphasize that
\begin{equation}\label{ee3}
\ds\lim_{t\to 0^+}\eta_u(t)=+\infty.
\end{equation}
Indeed, arguing as in previous cases and putting $0<t<1$, we easily see that
 $$\begin{array}{rcl}\eta_u(t)&\geq& (2-q)t^{2-\el}\Int \phi(t|\nabla u|)|\nabla u|^2+t^{2-\el}(\ell-2)\Int \phi(|\nabla (tu)|)|\nabla u|^2\\[2ex]
 &=&(\ell-q)t^{2-\el} \Int \phi(|\nabla (tu)|)|\nabla u|^2\\[2ex]
 &\geq& (\ell-q)t^{2-\el}t^{m-2} \Int \phi(|\nabla u|)|\nabla u|^2\\[2ex]
 &=& (\ell-q)t^{m-\el} \Int \phi(|\nabla u|)|\nabla u|^2.
 \end{array}$$
 Using one more time that $m < \el$ and $\ell > q $ it follows that \eqref{ee3} holds true.

 On the other hand, we mention that $\eta_{u}$ is a decreasing function which satisfies
\begin{equation}\label{ee33}
\lim_{t\to \infty}\eta_u(t)=0.
\end{equation}
In fact, taking into account \cite[Lemma 2.1]{Fuk_1}, for any $t>1$, we observe that

\begin{equation}\label{ee4}
\eta_u(t)\leq (m-q)t^{m-\el}\Int \phi(|\nabla u|)|\nabla u|^2
\end{equation}
and
\begin{equation}\label{ee5}
\eta_u(t)\geq (\ell-q)t^{\ell-\el}\Int \phi(|\nabla (u)|)|\nabla u|^2.
\end{equation}
Hence \eqref{ee4} and \eqref{ee5} say that \eqref{ee33} holds true. Moreover, we have also that
$$\begin{array}{rcl}
\eta_u'(t)&=&\Int[(2-\el)(2-q)t^{1-\el}\phi(t|\nabla u|)|\nabla u|^2 \\[2ex]
&+&(5-(\el+q))t^{2-\el}\phi'(t|\nabla u|)|\nabla u|^3]+t^{3-\el}\Int \phi''(t|\nabla u|)|\nabla u|^4.
\end{array}$$
Using hypothesis $(\phi_3)$ and  Remark \ref{conseqphi3} we mention that
 $$\left\{\begin{array}{rcl}
 t^2\phi''(t)&\leq& (m-4)t\phi'(t)+(m-2)\phi(t),\\
 t^2\phi''(t)&\geq& (\ell-4)t\phi'(t)+(\ell-2)\phi(t).
 \end{array}\right.$$
 As a consequence the estimates just above imply that
 $$\begin{array}{rcl}\eta_u'(t)&\leq&\Int(2-\el)(2-q)t^{1-\el}\phi(t|\nabla u|)|\nabla u|^2\\[2ex]
&+&\Int(5-(\el-q))t^{2-\el}\phi'(t|\nabla u|)|\nabla u|^3 \\ [2ex]
&+&\Int [(m-4)t^{2-\el}\phi'(t|\nabla u|)|\nabla u|^3+(m-2)t^{1-\el}\phi(t|\nabla u|)|\nabla u|^2]\\[2ex]
&=& \Int[((2-\el)(2-q)+m-2)t^{1-\el}\phi(t|\nabla u|)|\nabla u|^2]+\\[2ex]
&+&\Int [(m+1)-(\el+q))t^{2-\el}\phi'(t|\nabla u|)|\nabla u|^3]\end{array}$$
%&\leq&
Note that the first part of hypothesis $(H)$ implies that
\begin{equation*}
(\el-1)(m-\ell)<(\el-\ell)(m-q).
\end{equation*}
Moreover, we mention that $((2-\el)(2-q)+m-2)+((m+1)-(\el+q))(\ell-2) = (\el-1)(m-\ell)-(\el-\ell)(m-q)$.
Under these conditions it is no hard to verify that
$$
\begin{array}{rcl}
\eta_u'(t)&\leq& \Int[((2-\el)(2-q)+m-2)t^{1-\el}\phi(t|\nabla u|)|\nabla u|^2+\\[2ex]
&+&\Int ((m+1)-(\el+q))(\ell-2)t^{1-\el}\phi(t|\nabla u|)|\nabla u|^2]\\
&=&((2-\el)(2-q)+m-2)t^{1-\el}\Int\phi(t|\nabla u|)|\nabla u|^2\\[2ex]
&+&((m+1)-(\el+q))(\ell-2)]t^{1-\el}\Int\phi(t|\nabla u|)|\nabla u|^2\\[2ex]
&=&[(\el-1)(m-\ell) - (\el-\ell)(m-q)]t^{1-\el}\Int\phi(t|\nabla u|)|\nabla u|^2
<0.
\end{array}$$
Thus we conclude that $\eta_u$ is decreasing function proving that $m_u $ has an unique critical point which is a maximum critical point for $m_{u}$. The proof for this lemma is now complete.
\hfill\cqd
\end{lem}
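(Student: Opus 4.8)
The plan is to compute $m_u'$ once and for all, then split according to the sign of $\Int b(x)|u|^\el$. The derivative is
$$
m_u'(t) = (2-q)t^{1-q}\Int \phi(t|\nabla u|)|\nabla u|^2 + t^{2-q}\Int \phi'(t|\nabla u|)|\nabla u|^3\nabla u - (\el-q)t^{\el-q-1}\Int b(x)|u|^\el,
$$
and the key algebraic observation, coming from $(\phi_3)$ in the form \eqref{ee2}, is that the first two terms combine to dominate $(\ell-q)t^{1-q}\Int\phi(t|\nabla u|)|\nabla u|^2$ from below (and are bounded above by the analogous expression with $m$ in place of $\ell$). For item (1), when $\Int b(x)|u|^\el \le 0$ the last term is $\ge 0$, so this lower bound immediately gives $m_u'(t) > 0$ for all $t > 0$. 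The boundary behaviour $m_u(0^+) = 0$ and $m_u(\infty) = \infty$ then follows by sandwiching $m_u$ between $t^{m-q}$- and $t^{\ell-q}$-multiples of $\Int\phi(|\nabla u|)|\nabla u|^2$ minus $t^{\el-q}\Int b(x)|u|^\el$ via \cite[Lemma 2.1]{Fuk_1}: near $0$ both exponents $m-q, \ell-q$ are positive so the bracket vanishes, and near $\infty$ one uses $\ell > q$ together with $\Int b(x)|u|^\el \le 0$ so that $-t^{\el-q}\Int b(x)|u|^\el \ge 0$.

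For item (2), with $\Int b(x)|u|^\el > 0$, I would first isolate the easy endpoint facts: the same $(\phi_3)$-lower bound shows $m_u'(t) \ge \tfrac1t\big[(\ell-q)t^{m-q}\Int\phi(|\nabla u|)|\nabla u|^2 - (\el-q)t^{\el-q}\Int b(x)|u|^\el\big]$, which is positive for $t$ small since $m < \el$; and sandwiching $m_u$ from above by $t^{m-q}\Int\phi(|\nabla u|)|\nabla u|^2 - t^{\el-q}\Int b(x)|u|^\el$ for $t \ge 1$ gives $m_u(\infty) = -\infty$, again because $m < \el$. The heart of the argument is uniqueness of the critical point. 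I would rewrite $m_u'(t) = 0$, after multiplying by $t^{\el-2}$, as $\eta_u(t) = (\el-q)\Int b(x)|u|^\el$, where
$$
\eta_u(t) = (2-q)t^{2-\el}\Int \phi(t|\nabla u|)|\nabla u|^2 + t^{3-\el}\Int \phi'(t|\nabla u|)|\nabla u|^3.
$$
Then it suffices to prove $\eta_u$ is strictly decreasing on $(0,\infty)$ with $\eta_u(0^+) = +\infty$ and $\eta_u(\infty) = 0$; since $\Int b(x)|u|^\el > 0$ puts the target level strictly between these limits, there is exactly one $\tilde t$, and because $\eta_u$ crosses the level from above, $m_u'$ changes sign from $+$ to $-$ there, making $\tilde t$ the global maximum.

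The endpoint limits for $\eta_u$ follow from $(\phi_3)$ exactly as above: for $0 < t < 1$ one bounds $\eta_u(t) \ge (\ell-q)t^{m-\el}\Int\phi(|\nabla u|)|\nabla u|^2 \to +\infty$ since $m < \el$ and $\ell > q$; for $t > 1$ one sandwiches $\eta_u$ between $(\ell-q)t^{\ell-\el}$ and $(m-q)t^{m-\el}$ times $\Int\phi(|\nabla u|)|\nabla u|^2$, both tending to $0$. The main obstacle — and the only place where the sharp form of hypothesis $(H)$ enters — is showing $\eta_u' < 0$. Here I would differentiate, feed in the two-sided bound on $t^2\phi''(t)$ from Remark~\ref{conseqphi3} to eliminate the $\phi''$ term, and then use \eqref{ee2} once more to convert the remaining $\phi'$ term into a multiple of $\phi$; the coefficients collapse, via the identity $((2-\el)(2-q)+m-2) + ((m+1)-(\el+q))(\ell-2) = (\el-1)(m-\ell) - (\el-\ell)(m-q)$, to $[(\el-1)(m-\ell) - (\el-\ell)(m-q)]\,t^{1-\el}\Int\phi(t|\nabla u|)|\nabla u|^2$, which is negative precisely because $(H)$ guarantees $(\el-1)(m-\ell) < (\el-\ell)(m-q)$. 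One subtlety to watch: applying \eqref{ee2} to replace $\phi'$ by $\phi$ is only legitimate when the coefficient $(m+1)-(\el+q)$ of the $\phi'$-term has the right sign, so I would check that $(H)$ (which forces $q$ small and $m < \el$) indeed makes this coefficient negative, so that the inequality goes the correct way. With $\eta_u$ strictly decreasing the uniqueness of $\tilde t$, its being a global maximum, and $m_u(\infty) = -\infty$ are all in hand, completing the proof. \hfill\cqd
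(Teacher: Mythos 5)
Your proposal is correct and follows essentially the same route as the paper: the same two-sided bounds from $(\phi_3)$ for the monotonicity and endpoint limits of $m_u$, and the same auxiliary function $\eta_u$ whose strict decrease (via the identity $((2-\el)(2-q)+m-2)+((m+1)-(\el+q))(\ell-2)=(\el-1)(m-\ell)-(\el-\ell)(m-q)$ and hypothesis $(H)$) yields uniqueness of $\tilde t$. Your explicit check that $(m+1)-(\el+q)<0$ (which follows from $q>1$ and $m<\el$) so that replacing $\phi'$ by $(\ell-2)\phi$ preserves the direction of the inequality is a point the paper leaves implicit, and it is worth keeping.
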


Now we shall prove that $m_{u}$ has a behavior at infinity and at the origin given by the sings of $\int_{\Omega} a(x)|u|^{q}$ and
$\int_{\Omega} b(x)|u|^{\el}$. This is crucial in to prove a complete description on the geometry for the fibering maps.

\begin{lem}\label{fib}
	Let $u\in W^{1,\Phi}_0(\Omega)/\{0\}$ be a fixed function. Then we shall consider the following assertions:
\begin{enumerate}
		\item Assume that $\Int b(x)|u|^{\el} \leq 0$. Then $\gamma'_u(t)\neq 0$ for any $t>0$ and $\lambda > 0$ whenever $\Int a(x)|u|^{q} \leq 0$. Furthermore, there exist an unique $ t_1= t_1(u,\lambda)$ in such way that $\gamma'_u(t_1)=0$  and $ t_1 u\in \N^+$ whenever $\Int a(x)|u|^{q} > 0.$
	
\item Assume that $\Int b(x)|u|^{\el} > 0$ holds. Then there exists an unique $t_1=t_1(u,\lambda)> \tilde t$ such that
$\gamma'_u(t_1)= 0$ and $t_1 u\in \N^-$ whenever $\Int a(x)|u|^{q} \leq 0$.
\item Assume that $(H)$ holds. For each $\lambda>0$ small enough there exists unique $0<t_1=t_1(u,\lambda)<\tilde t<t_2=t_2(u,\lambda)$ such that 	$\gamma'_u(t_1)=\gamma'_u(t_2)=0$, $ t_1 u\in \N^+$ and $ t_2 u\in \N^-$ whenever $\Int a(x)|u|^{q}> 0$, $\Int b(x)|u|^{\el}> 0$ holds.
\end{enumerate}	
\end{lem}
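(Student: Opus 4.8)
The plan is to reduce everything to the behavior of the auxiliary function $m_u$ already analyzed in Lemma \ref{m_u-comp}, exploiting the equivalence $tu\in\N \iff m_u(t)=\lambda\Int a(x)|u|^q$ from Lemma \ref{m_uegamma_u}, together with the sign correspondence between $\gamma''_u(t)$ and $m'_u(t)$. The crucial observation I would establish first is the pointwise identity
$$
\gamma''_u(t) = t^{q-1}\, m'_u(t) + (q-2)\,t^{q-2}\Big(m_u(t) - \lambda\Int a(x)|u|^q\Big)\cdot(\text{something}),
$$
or more cleanly: whenever $tu\in\N$ (so $m_u(t)=\lambda\Int a(x)|u|^q$), one has $\gamma''_u(t) = t^{q-1} m_u'(t)$ up to a positive factor. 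Indeed, from the definition $\gamma'_u(t)= t^{q-1}\big(m_u(t) - \lambda\Int a(x)|u|^q\big)$, differentiating gives $\gamma''_u(t) = (q-1)t^{q-2}\big(m_u(t)-\lambda\Int a|u|^q\big) + t^{q-1}m_u'(t)$, and at a point where $tu\in\N$ the first term vanishes, leaving $\gamma''_u(t)=t^{q-1}m_u'(t)$. Hence $t_1 u\in\N^+$ iff $m_u'(t_1)>0$ and $t_1 u\in\N^-$ iff $m_u'(t_1)<0$. This single identity converts all three cases into counting intersections of the graph of $m_u$ with the horizontal line at height $\lambda\Int a(x)|u|^q$ and reading off the sign of $m_u'$ at each intersection.

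For item (1), when $\Int b(x)|u|^\el\le 0$: by Lemma \ref{m_u-comp}(1), $m_u$ is strictly increasing from $m_u(0)=0$ to $m_u(\infty)=\infty$. If $\Int a(x)|u|^q\le 0$ the line lies at height $\le 0$, which $m_u$ (being positive and increasing off $0$) never meets for $t>0$, so $\gamma'_u(t)\ne 0$. If $\Int a(x)|u|^q>0$, strict monotonicity and the limits give exactly one $t_1>0$ with $m_u(t_1)=\lambda\Int a|u|^q$, and since $m_u$ is increasing everywhere, $m_u'(t_1)>0$, hence $t_1u\in\N^+$. For item (2), when $\Int b(x)|u|^\el>0$ and $\Int a(x)|u|^q\le 0$: by Lemma \ref{m_u-comp}(2), $m_u$ increases on $(0,\tilde t)$, attains a global max at $\tilde t$, then decreases to $-\infty$; also $m_u(0)=0$. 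The line at height $\le 0$ thus meets the graph at exactly one point, which must lie on the decreasing branch $(\tilde t,\infty)$ (the increasing branch stays $>0$ for $t>0$ small, and strictly so up to $\tilde t$; one checks $m_u>0$ on $(0,\tilde t]$ using $m_u(0)=0$ and monotone increase). There $m_u'(t_1)<0$, so $t_1u\in\N^-$, and $t_1>\tilde t$.

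Item (3) is the main obstacle and where hypothesis $(H)$ enters decisively. With $(H)$, $\Int a(x)|u|^q>0$ and $\Int b(x)|u|^\el>0$, the line sits at a strictly positive height $\lambda\Int a|u|^q$. I would argue: on $(0,\tilde t)$, $m_u$ increases from $0$ to the global maximum value $m_u(\tilde t)>0$; on $(\tilde t,\infty)$ it decreases from $m_u(\tilde t)$ to $-\infty$. Provided $0<\lambda\Int a|u|^q < m_u(\tilde t)$, the line meets the graph in exactly two points $0<t_1<\tilde t<t_2$, with $m_u'(t_1)>0$ (so $t_1u\in\N^+$) and $m_u'(t_2)<0$ (so $t_2u\in\N^-$). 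The delicate point is the smallness requirement on $\lambda$: I must show there is $\Lambda>0$, independent of $u$ on suitable normalized sets (or after homogeneity-type scaling reducing to $\|u\|=1$), such that $\lambda<\Lambda$ forces $\lambda\Int a(x)|u|^q<\max_{t>0}m_u(t)$. This is exactly where one uses the quantitative inequality derived in the proof of Lemma \ref{m_u-comp}, namely $(\el-1)(m-\ell)<(\el-\ell)(m-q)$ from the first part of $(H)$, which guaranteed $\eta_u'<0$ and hence the uniqueness of $\tilde t$ and a usable lower bound on $m_u(\tilde t)$ in terms of $\|u\|$ and the Sobolev constants $S_\ell, S_{\el}$ and $\|a^+\|,\|b^+\|$. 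Combining that lower bound with the embedding estimate $\Int a(x)|u|^q\le \|a^+\|_{(\ell/q)'}S_\ell\|u\|^q$ and the growth gap between exponents $q$ and the effective exponent at $\tilde t$ yields the threshold $\Lambda$; for $\lambda<\Lambda$ the two-point intersection persists for every $u$, completing the proof. I would also remark that the resulting $t_1,t_2$ depend continuously on $(u,\lambda)$ by the implicit function theorem applied to $m_u'$ (or $\gamma'_u$), using $\gamma''\ne 0$ off $\N^0$, which is $\emptyset$ by Lemma \ref{c1}.
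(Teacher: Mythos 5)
Your proposal is correct and follows essentially the same route as the paper: the identity $\gamma_u'(t)=t^{q-1}\bigl(m_u(t)-\lambda\Int a(x)|u|^q\bigr)$, its differentiated form $\gamma_u''(t_i)=t_i^{q-1}m_u'(t_i)$ at Nehari points, and the shape of $m_u$ from Lemma \ref{m_u-comp} are exactly the ingredients the paper uses to count intersections with the horizontal line and sort them into $\N^+$ and $\N^-$. Your extra concern in item (3) about making the threshold $\bar{\lambda}_1$ uniform in $u$ goes beyond what the lemma (stated for a fixed $u$) requires; the paper itself only asserts the per-$u$ threshold $m_u(\tilde t)>\lambda\Int a(x)|u|^q$, which is immediate since $m_u(\tilde t)>0$.
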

\proof
First of all, we shall consider the proof for the case $\Int b(x)|u|^{\el}\leq 0$ and $\Int a(x)|u|^{q}\leq 0$. Using Lemma \ref{m_u-comp} (1) it is easy to verify that
\begin{equation}\label{m_u}
	m_u(0)=0,~\lim_{t\rightarrow\infty}m_u(t) = \infty \,\,\mbox{and} \,\, m'_u(t)>0, t \geq 0.\nonumber
\end{equation}
Under these conditions we deduce that
$$
m_u(t)\neq \lambda\Int a(x)|u|^{q} \,\, \mbox{for any} \,\,t>0, \lambda>0.
$$
According to Lemma \ref{m_uegamma_u} we deduce that $tu\not\in \N$ for any  $t>0$. In particular, we see also that $\gamma'(t)\neq 0$ for each $t>0$.

Now we shall consider the proof for the case $\Int a(x)|u|^{q} > 0$ and $\Int b(x)|u|^{\el} \leq 0$. Using one more time Lemma \ref{m_u-comp} (1) we observe that $m_{u}(0)= 0 , m_{u}(\infty) = \infty$ and $m_{u}$ is a increasing function. In particular, the equation
$$m_u(t)= \lambda \Int a(x)|u|^{q}$$
admits exactly one solution $t_1= t_1(u,\lambda)>0$. Hence, using Lemma \ref{m_uegamma_u}, we know that $t_1 u\in \N$ proving that $\gamma'_u( t_1) =0$. Additionally, using the identity
\begin{equation}\label{rel-m-u-gamma-u}
	m_u(t)=t^{1-q}\gamma_u'(t)+\lambda \Int a(x)|u|^{q},\nonumber
\end{equation}
we easily see that
$$0<m'_u(t_1) = t_1^{1-q}\gamma''_u(t_1).$$
In particular, we have been proven that $t_1u\in\N^+$.

Now we shall consider the proof for the case $\Int a(x) |u|^{q } \leq 0$ and $\Int b(x) |u|^{\el }  > 0$. Here the function $m_{u}$
admits an unique turning point $\tilde{t} > 0$, i.e, we have that  $m^{\prime}_{u}(t) = 0, t > 0$ if only if $t = \tilde{t}$, see Lemma \ref{m_u-comp} (2).
Moreover, $\tilde{t}$ is a global maximum point for $m_{u}$ in such way that $m_{u}(\tilde{t}) > 0, m_{u}(\infty) = - \infty$.
As a product there exits an unique $t_1>\tilde t$ such that
$$m_u(t_1)= \lambda\Int a(x)|u|^{q}.$$
Here we emphasize that $m_u'(t_1)<0$ where we have used the fact that $m_{u}$ is a decreasing function in $(\tilde{t}, \infty)$. As a consequence we obtain $0>m'_u(t_1)=t_1^{1-q}\gamma_u''(t_1)$ proving that $t_1u\in\N^-$.

At this moment we shall consider the proof for the case $\Int a(x)|u|^{q} > 0$ and $\Int b(x)|u|^{\el} > 0$.
Due the fact that $\Int a(x)|u|^{q}>0$ we obtain  $\bar{\lambda}_1>0$  such that
\begin{equation}\label{lambda1barra}
m_u(\tilde t) > \lambda\Int a(x)|u|^{q}, \,\,\mbox{for any} \,\, \lambda\in (0,\bar{\lambda}_1).
\end{equation}
It is worthwhile to mention that $m_{u}$ is increasing in $(0, \tilde{t})$ and decreasing in $(\tilde{t}, \infty)$.
It is not hard to verify that there exist exactly two points $0<t_1=t_1(u,\lambda)  <\tilde t < t_2=t_2(u,\lambda)$ such that
$$m_u(t_i)=\lambda\Int a(x)|u|^{q},~i=1,2.$$
Additionally, we have that $m_u'(t_1)>0$ and $m_u'(t_2)<0$. Arguing as in the previous step we ensure that $t_1u\in\N^+$ and $t_2u\in\N^-$. This completes the proof.  \hfill\cqd
\vskip.2cm

The next lemma shows that for any $\lambda>0$ small enough  the function $\gamma_u$ assumes  positive values. This is crucial for the proof of our
main theorems proving that $\gamma_u$ admits one or two critical points. At the same time, we shall show also that $J_\lambda$ is away form zero on the Nehari manifold $\N^{-}$. In particular, any critical point for $J_{\lambda}$ on
$\N^{-}$ provide us a nontrivial critical point.

\vskip.2cm

\begin{lem}\label{nehari-} There exist $\delta_1, \tilde{\lambda}_1>0$
 in such way that $J_\lambda(u)\geq\delta_1$ for any $u\in \N^{-}$
where $0<\lambda < \tilde{\lambda}_1$.
\end{lem}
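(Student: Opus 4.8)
The plan is to show that on $\N^-$ the energy $J_\lambda$ is bounded below by a positive constant uniformly in $\lambda$ small, by combining a lower bound on the norm of elements of $\N^-$ with the coercivity-type estimate \eqref{eq3}. First I would extract from $u\in\N^-$ the inequality $\gamma''_u(1)<0$. Using \eqref{eq5} in the form
$$\gamma''_u(1)=\Int \phi'(|\nabla u|)|\nabla u|^3+(2-q)\phi(|\nabla u|)|\nabla u|^2-(\el-q)b(x)|u|^{\el}<0,$$
together with $(\phi_3)$ (so that $\phi'(t)t\geq(\ell-2)\phi(t)$) and Remark \ref{conseqphi3}, I get
$$\ell(\ell-q)\min\{\|u\|^\ell,\|u\|^m\}\leq(\ell-q)\Int\phi(|\nabla u|)|\nabla u|^2\leq(\el-q)\|b^+\|_\infty S_{\el}\|u\|^{\el},$$
exactly as in the proof of Lemma \ref{c1}, which yields a lower bound $\|u\|\geq c_0>0$ for all $u\in\N^-$, with $c_0$ depending only on $q,\ell,m,S_{\el},\|b^+\|_\infty$ (in particular independent of $\lambda$).

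Next I would estimate $J_\lambda(u)$ from below on $\N^-$ using \eqref{eq3}: since $\Int\Phi(|\nabla u|)-\tfrac1\el\phi(|\nabla u|)|\nabla u|^2\geq\big(\tfrac1m-\tfrac1\el\big)\Int\phi(|\nabla u|)|\nabla u|^2\geq \ell\big(\tfrac1m-\tfrac1\el\big)\min\{\|u\|^\ell,\|u\|^m\}$ by Remark \ref{conseqphi3}, and the concave term is controlled by $\lambda\big(\tfrac1q-\tfrac1\el\big)\|a^+\|_\infty\Int|u|^q\leq \lambda\big(\tfrac1q-\tfrac1\el\big)\|a^+\|_\infty C\|u\|^q$ via the embedding $\w\hookrightarrow L^q(\Omega)$, I obtain
$$J_\lambda(u)\geq \ell\Big(\tfrac1m-\tfrac1\el\Big)\min\{\|u\|^\ell,\|u\|^m\}-\lambda\Big(\tfrac1q-\tfrac1\el\Big)\|a^+\|_\infty C\,\|u\|^q.$$
Writing $\min\{\|u\|^\ell,\|u\|^m\}=\|u\|^\alpha$ with $\alpha\in\{\ell,m\}$ and factoring out $\|u\|^q$ (legitimate since $q<\ell\le\alpha$), the right side equals $\|u\|^q\big(\ell(\tfrac1m-\tfrac1\el)\|u\|^{\alpha-q}-\lambda(\tfrac1q-\tfrac1\el)\|a^+\|_\infty C\big)$; using $\|u\|\geq c_0$ I can replace $\|u\|^{\alpha-q}$ by $\min\{c_0^{\ell-q},c_0^{m-q}\}=:c_1>0$, giving $J_\lambda(u)\geq c_0^q\big(\ell(\tfrac1m-\tfrac1\el)c_1-\lambda(\tfrac1q-\tfrac1\el)\|a^+\|_\infty C\big)$.

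Finally I would choose $\tilde\lambda_1>0$ so small that for $0<\lambda<\tilde\lambda_1$ the bracket is at least half its first term, i.e. $\tilde\lambda_1:=\dfrac{\ell\big(\tfrac1m-\tfrac1\el\big)c_1}{2\big(\tfrac1q-\tfrac1\el\big)\|a^+\|_\infty C}$ (and $\tilde\lambda_1\le\Lambda_1$ so that $\N^-$ is a well-defined $C^1$-manifold by Lemma \ref{c1}), and set $\delta_1:=\tfrac12\ell\big(\tfrac1m-\tfrac1\el\big)c_1\,c_0^q>0$. Then $J_\lambda(u)\geq\delta_1$ for every $u\in\N^-$ and every $\lambda\in(0,\tilde\lambda_1)$, which is the claim. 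The only delicate point is bookkeeping: one must be careful that the uniform lower bound $c_0$ on $\|u\|$ genuinely comes from the $b$-term alone (hence is $\lambda$-independent), so that the $\lambda$-dependent concave term can then be absorbed for $\lambda$ small; the rest is the same pattern of $N$-function estimates already used in Proposition \ref{coercive} and Lemma \ref{c1}.
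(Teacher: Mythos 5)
Your argument is correct and follows essentially the same route as the paper's proof: the $\lambda$-independent lower bound $\|u\|\geq c_0$ on $\N^-$ extracted from $\gamma''_u(1)<0$ exactly as in Lemma \ref{c1}, followed by the coercivity estimate \eqref{functional1} factored as $\|u\|^q[\,\cdot\,]$ and a choice of $\tilde\lambda_1$ making the bracket positive. The only cosmetic difference is your use of $\|a^+\|_\infty$ with the embedding into $L^q(\Omega)$ where the paper uses H\"older with $\|a^+\|_{(\ell/q)'}$, which does not affect the conclusion.
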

 \proof Since $u\in\N^-(\Omega),$ we have that
$\gamma''_{u}(1)<0$. Arguing as in the proof of Lemma \ref{c1}, we obtain $$||u||>\left[\Fr{\ell(\ell-q)}{(\el-q)S_{\el}\|b^+\|^{\infty}}\right]^{\frac{1}{\el-\alpha}}.$$
Moreover, in view of \eqref{functional1} and the Sobolev imbedding, we have that
$$\begin{array}{rcl}J_{\lambda}(u)&\geq&\ell\left(\Fr{1}{m}-\Fr{1}{\el} \right)\min\{||u||^\ell,||u||^m\}+\lambda\left(\Fr{1}{\el}-\Fr{1}{q}\right)\Int a(x)|u|^{q}\\[2ex]
&=&\ell\left(\Fr{1}{m}-\Fr{1}{\el} \right)||u||^\alpha+\lambda\left(\Fr{1}{\el}-\Fr{1}{q}\right)\Int a(x)|u|^{q}\\[2ex]
&\geq&
\ell\left(\Fr{1}{m}-\Fr{1}{\el} \right)||u||^\alpha+\lambda\left(\Fr{1}{\el}-\Fr{1}{q}\right)\|a^+\|_{\left(\frac{\ell}{q}\right)'}S_{\ell}||u||^q\\[2ex]
&=&||u||^q\left[\ell\left(\Fr{1}{m}-\Fr{1}{\el} \right)||u||^{\alpha-q}+\lambda\left(\Fr{1}{\el}-\Fr{1}{q}\right)\|a^+\|_{\left(\frac{\ell}{q}\right)'}S_{\ell}\right].
\end{array}$$
Using the inequalities just above we get
$$J_{\lambda}(u)>\left[\Fr{\ell(\ell-q)}{(\el-q)S_{\el}\|b^+\|_{\infty}}\right]^{\frac{q}{\el-\alpha}}\left[A +\lambda B \right]$$
where
$$A = \ell\left(\Fr{1}{m}-\Fr{1}{\el} \right)\left(\Fr{\ell(\ell-q)}{(\el-q)S_{\el}\|b^+\|_{\infty}}\right)^{\frac{\alpha-q}{\el-\alpha}}$$
and
$$B = \left(\Fr{1}{\el}\\-\Fr{1}{q}\right)\|a^+\|_{\left(\frac{\ell}{q}\right)'}S_{\ell}.$$
Therefore, for each $0<\lambda<\tilde{\lambda}_1:=\Fr{q}{m}\lambda_1$%\Lambda_1$
 where we take $\lambda_{1}>0 $ given by \eqref{Lambda1}. Here we put $\tilde{\lambda}_1:=\Fr{q}{m}\lambda_1$ obtaining the desired result. This finishes the proof. \hfill\cqd

Now we shall prove that any minimizer on $\N^{+}$ has negative energy. More specifically, defining
$\alpha_\lambda:=\displaystyle\inf_{u\in \N}J_\lambda(u) ,  \alpha_\lambda^+=\displaystyle\inf_{u\in \N^+} J_\lambda(u)$
we can be shown the following result

\begin{lem}\label{nehari+} Suppose $(H)$. Then there exist $u \in \N^{+}$ and $\lambda_{1} > 0$ in such way that $\alpha_{\lambda}^{+} \leq J_\lambda(u) < 0$ for each $0<\lambda < \lambda_1$. In particular, we obtain $\alpha_{\lambda} = \alpha_{\lambda}^{+}$ for each $0<\lambda < \lambda_1$.
\end{lem}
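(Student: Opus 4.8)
The goal is to exhibit a single function $u \in \N^+$ with $J_\lambda(u) < 0$ for all small $\lambda$, and then upgrade this to the identity $\alpha_\lambda = \alpha_\lambda^+$. The natural candidate comes from the hypothesis $(H)$, which guarantees $a^+ \not\equiv 0$: I would pick $u_0 \in \w\setminus\{0\}$ with $\Int a(x)|u_0|^q > 0$ (take $u_0$ supported near a point where $a$ is positive, or simply any function for which the positive part of $a|u_0|^q$ dominates). For such $u_0$, Lemma \ref{fib} (cases (1) or (3), depending on the sign of $\Int b(x)|u_0|^\el$) provides a unique $t_1 = t_1(u_0,\lambda) > 0$ with $t_1 u_0 \in \N^+$. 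Set $u := t_1 u_0$.

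The heart of the argument is to show $\gamma_{u_0}(t_1) = J_\lambda(t_1 u_0) < 0$. I would examine the fibering map $\gamma_{u_0}$ directly near $t = 0$: since $1 < q < \ell \le m < \el$, the concave term $-\lambda t^q \Int a(x)|u_0|^q / q$ dominates both $\Int \Phi(t|\nabla u_0|) \sim t^\ell$ (for $t$ small, using Remark \ref{conseqphi3}, $\Int\Phi(t|\nabla u_0|) \le t^\ell \Int\Phi(|\nabla u_0|)$ for $t\le 1$) and $-t^\el \Int b(x)|u_0|^\el / \el$ as $t \to 0^+$, because $q < \ell$ and $q < \el$. Hence $\gamma_{u_0}(t) < 0$ for all sufficiently small $t > 0$. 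Since $t_1 u_0 \in \N^+$ is, by the analysis of Lemma \ref{fib}, the \emph{first} critical point of $\gamma_{u_0}$ and a local minimum, and $\gamma_{u_0}$ starts at $0$ and is negative just after the origin, the point $t_1$ must lie in the region where $\gamma_{u_0}$ is negative; more precisely $\gamma_{u_0}$ decreases on $(0,t_1)$ from $\gamma_{u_0}(0^+) = 0$, so $\gamma_{u_0}(t_1) < 0$. This forces $\alpha_\lambda^+ \le J_\lambda(u) < 0$. I would need to take $\lambda < \lambda_1$ so that $\N^0 = \emptyset$ (Lemma \ref{c1}) and so that the relevant case of Lemma \ref{fib} applies; shrinking $\lambda_1$ if necessary is harmless.

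For the final claim $\alpha_\lambda = \alpha_\lambda^+$: by Proposition \ref{coercive}, $J_\lambda$ is bounded below on $\N$, so $\alpha_\lambda > -\infty$, and trivially $\alpha_\lambda \le \alpha_\lambda^+$ since $\N^+ \subset \N$. For the reverse inequality, observe that $\N = \N^+ \dot\cup \N^-$ for $\lambda < \lambda_1$ (Lemma \ref{c1}), and by Lemma \ref{nehari-} there is $\delta_1 > 0$ with $J_\lambda(v) \ge \delta_1 > 0$ for every $v \in \N^-$ (after further shrinking $\lambda$ below $\tilde\lambda_1$). Since we have just produced an element of $\N^+$ with negative energy, $\alpha_\lambda \le J_\lambda(u) < 0 < \delta_1$, so the infimum over $\N$ cannot be attained or approached along $\N^-$; hence $\alpha_\lambda = \inf_{\N^+} J_\lambda = \alpha_\lambda^+$. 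The $\lambda_1$ in the statement is then the minimum of the thresholds coming from Lemmas \ref{c1}, \ref{fib}, and \ref{nehari-}.

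The main obstacle I anticipate is the sign analysis of $\gamma_{u_0}(t_1)$ when $\Int b(x)|u_0|^\el > 0$: one must be sure that the $C^2$ fibering-map picture from Lemma \ref{fib}(3) genuinely places $t_1$ to the left of the local maximum and in the negative-value region, which requires combining $\gamma_{u_0}(0^+)=0$, the monotonicity of $\gamma_{u_0}$ on $(0,t_1)$ coming from the bijection with $m_{u_0}$, and the dominance of the concave term near $0$. Once the behavior near the origin is pinned down via the power comparisons $q < \ell$, $q < \el$, the rest is bookkeeping on the $\lambda$-thresholds.
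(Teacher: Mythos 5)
Your proposal is correct, but it follows a genuinely different route from the paper. The paper's proof is a purely algebraic estimate valid for \emph{every} $u\in\N^+$: from $\gamma_u''(1)>0$ together with $\phi'(t)t\le (m-2)\phi(t)$ one obtains
$\int_\Omega b(x)|u|^{\el}<\frac{m-q}{\el-q}\int_\Omega\phi(|\nabla u|)|\nabla u|^2$, and substituting this into the representation \eqref{eq2} of $J_\lambda$ on $\N$ (together with $\Phi(t)\le\frac{1}{\ell}\phi(t)t^2$) gives
$J_\lambda(u)<\frac{1}{q}\left[\frac{q-\ell}{\ell}+\frac{m-q}{\el}\right]\int_\Omega\phi(|\nabla u|)|\nabla u|^2$, whose bracket is negative precisely because $(H)$ imposes $q<\ell(\el-m)/(\el-\ell)$. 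That computation is where this structural part of $(H)$ actually enters the paper, it needs no smallness of $\lambda$ for the sign, and it yields the stronger conclusion $J_\lambda<0$ on all of $\N^+$ (nonemptiness of $\N^+$ still coming from Lemma \ref{fib}, exactly as in your construction). Your argument instead manufactures one explicit element $t_1u_0\in\N^+$ and uses the identity $\gamma_{u_0}'(t)=t^{q-1}\bigl(m_{u_0}(t)-\lambda\int_\Omega a(x)|u_0|^q\bigr)$ to see that $\gamma_{u_0}$ decreases on $(0,t_1)$ from $\gamma_{u_0}(0)=0$; this is sound, uses only $q<\ell$ and $a^+\not\equiv0$ rather than the sharper inequality in $(H)$, and suffices for the statement as written (the inequality $\alpha_\lambda^+\le J_\lambda(u)$ being automatic from the definition of the infimum), but it produces only a single negative-energy element and, when $\int_\Omega b(x)|u_0|^{\el}>0$, it genuinely requires the $u_0$-dependent threshold $\bar\lambda_1$ of \eqref{lambda1barra}, which the paper's sign argument does not. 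The identification $\alpha_\lambda=\alpha_\lambda^+$ is handled identically in both proofs, via $\N=\N^+\dot{\cup}\N^-$ and $\alpha_\lambda^-\ge\delta_1>0$ from Lemma \ref{nehari-}.
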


\proof Fix $u\in \N^+$. Here we observe that $\gamma_u''(1)>0$. As a consequence
$$
\begin{array}{rcl}
\displaystyle(\el-q)\int_\Omega b(x)|u|^{\el}&<&\Int\phi'(|\nabla u|)|\nabla u|^3+(2-q)\phi(|\nabla u|)|\nabla u|^2
\\[2ex]
&\leq&\Int(m-2)\phi(|\nabla u|)|\nabla u|^2+(2-q)\phi(|\nabla u|)|\nabla u|^2\\
&=&(m-q)\Int\phi(|\nabla u|)|\nabla u|^2. \end{array}$$
The last inequalities imply that
\begin{equation*}
\int_\Omega b(x)|u|^{\el}<\Fr{m-q}{\el-q}\Int\phi(|\nabla u|)|\nabla u|^2.
\end{equation*}
On the other hand, using the inequality just above and \eqref{conseqphi3} we see that
 we easily see that
$$\begin{array}{rcl}
J_\lambda(u)&\leq& \left(\Fr{1}{\ell}-\Fr{1}{q}\right)\Int\phi(|\nabla u|)|\nabla u|^2+\left(\Fr{1}{q}-\Fr{1}{\el}\right)b(x)|u|^{\el}\\[3ex]
&<&\left[\left(\Fr{1}{\ell}-\Fr{1}{q}\right)+\left(\Fr{1}{q}-\Fr{1}{\el}\right)\left(\Fr{m-q}{\el-q}\right)\right]\Int\phi(|\nabla u|)|\nabla u|^2\\[3ex]
&=&\Fr{1}{q}\left[\Fr{q-\ell}{\ell }+\Fr{m-q}{\el }\right]\Int\phi(|\nabla u|)|\nabla u|^2.
\end{array}$$
In view of hypothesis $(H)$ it follows that $\alpha^+_\lambda \leq J_{\lambda}(u) <0$. Additionally, we stress that $\N=\N^-\cup\N^+$ and $\alpha_\lambda^->0$. Hence we deduce that $\alpha_\lambda^+=\alpha_\lambda$. This completes the proof. \hfill\cqd

\section{The Palais-Smale condition}

In this section we shall prove some auxiliary results in order to get the Palais-Smale condition for the functional $J_{\lambda}$ on the Nehari manifold.
In general, given any Banach space $X$ space endowed with the norm $\|\|$ and taking $I : X \rightarrow \mathbb{R}$ a functional of $C^{1}$ class we recall that a sequence $(u_{n}) \in X$ is said to be a Palais-Smale sequence at level $c \in \mathbb{R}$, in short $(PS)_{c}$, when $I(u_{n}) \rightarrow c$ and $I^{\prime}(u_{n}) \rightarrow 0$ as $n \rightarrow \infty$. Recall that $I$ satisfies the Palais-Smale condition at the level $c$, in short $(PS)_{c}$ condition, when any $(PS_{c})$ sequence admits a convergent subsequence. We say simply that $I$ verifies the Palais-Smale condition when $(PS)_{c}$ condition holds true for any $c \in \mathbb{R}$.

Here we follow same ideas discussed in Tarantello \cite{tarantello}.

\begin{lem}\label{lem1ps} Suppose $(\phi_{1})- (\phi_{3})$ and $(H)$. Let $u \in \mathcal{N}^{+}$ be fixed. Then there exist $\epsilon>0$ and a differentiable function $$\xi:B(0,\epsilon)\subset W_{0}^{1,\Phi}(\Omega)\to (0, \infty),\ \ \ \xi(0)=1,\,\ \xi(v)(u-v)\in \mathcal{N}^{+}, v  \in B(0,\epsilon).$$
Furthermore, we have that
\begin{eqnarray}\label{ps}
\left<\xi'(0),v\right> &=& \dfrac{1}{\gamma''_u(1)} \Int\left\{[\phi'(|\nabla u|)|\nabla u|+2\phi(|\nabla u|)]\nabla u\nabla v-\el b(x)|u|^{\el-2}uv\right.\nonumber\\
&-&\left.q\lambda a(x)|u|^{q-2}uv\right\}.
\end{eqnarray}
\end{lem}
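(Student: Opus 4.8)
The plan is to apply the implicit function theorem to the scalar-valued map that detects membership in the Nehari manifold. Define $F:\R\times\w\to\R$ by
$$F(t,v)=t\Int\phi(t|\nabla(u-v)|)|\nabla(u-v)|^2-\lambda t^{q-1}\Int a(x)|u-v|^{q}-t^{\el-1}\Int b(x)|u-v|^{\el},$$
so that $F(t,v)=\gamma'_{u-v}(t)$ and, by the discussion preceding Lemma \ref{m_uegamma_u}, one has $t(u-v)\in\N$ if and only if $F(t,v)=0$. Since $u\in\N^{+}\subset\N$ we get $F(1,0)=\gamma'_u(1)=0$, and since $u\in\N^{+}$ we get $\partial_t F(1,0)=\gamma''_u(1)>0$. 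The first step is to verify that $F$ is of class $C^{1}$ on a neighbourhood of $(1,0)$; this is exactly the point where $(\phi_{1})-(\phi_{3})$ are needed, in the same way they were used to show the fibering maps are $C^{2}$ and that $u\mapsto\langle J'_\lambda(u),u\rangle$ is $C^{1}$ (cf. Lemma \ref{c1} and the computation of $\langle\theta'(u),v\rangle$ in Lemma \ref{criticalpoint}).

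Granting this, the implicit function theorem provides $\epsilon>0$ and a $C^{1}$ function $\xi:B(0,\epsilon)\subset\w\to\R$ with $\xi(0)=1$ and $F(\xi(v),v)=0$ for all $v\in B(0,\epsilon)$, i.e. $\xi(v)(u-v)\in\N$. I would then shrink $\epsilon$ twice by continuity: first so that $\xi(v)>0$ on $B(0,\epsilon)$ (possible since $\xi(0)=1$), which gives $\xi:B(0,\epsilon)\to(0,\infty)$; and then so that $\xi(v)(u-v)\in\N^{+}$. For the latter, note that $\xi(v)(u-v)\in\N^{+}$ is equivalent to $\gamma''_{u-v}(\xi(v))>0$ (since $\gamma''_{sw}(1)=s^{2}\gamma''_w(s)$ and $\xi(v)>0$), and $(v,t)\mapsto\gamma''_{u-v}(t)$ is continuous and positive at $(0,1)$; alternatively, one uses $\N^{0}=\emptyset$ from Lemma \ref{c1} together with connectedness of $B(0,\epsilon)$ to conclude that the image of the continuous map $v\mapsto\xi(v)(u-v)$ stays in the relatively open set $\N^{+}$ containing $u$.

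Finally, to obtain \eqref{ps} I would differentiate the identity $F(\xi(v),v)\equiv0$ at $v=0$. The chain rule gives, for every $v\in\w$,
$$\gamma''_u(1)\,\langle\xi'(0),v\rangle+\langle\partial_v F(1,0),v\rangle=0.$$
Computing $\partial_v F(1,0)$ directly — differentiating $w\mapsto\int_\Omega\phi(|\nabla(u-w)|)|\nabla(u-w)|^2$ and using that the gradient of $\xi\mapsto\phi(|\xi|)|\xi|^2$ is $(\phi'(|\xi|)|\xi|+2\phi(|\xi|))\xi$, and differentiating the concave and critical terms in the obvious way — yields
$$\langle\partial_v F(1,0),v\rangle=-\Int[\phi'(|\nabla u|)|\nabla u|+2\phi(|\nabla u|)]\nabla u\nabla v+\lambda q\Int a(x)|u|^{q-2}uv+\el\Int b(x)|u|^{\el-2}uv.$$
Substituting this into the previous relation and dividing by $\gamma''_u(1)\neq0$ produces precisely formula \eqref{ps}.

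I expect the only genuine difficulty to be the joint $C^{1}$ regularity of $F$ near $(1,0)$: one must check that the Nemytskii-type operators appearing in $F$, in particular $v\mapsto\int_\Omega\phi(|\nabla(u-v)|)|\nabla(u-v)|^2$ whose linearization involves $\phi'$, are continuously differentiable on the Orlicz--Sobolev space $\w$. This is handled by the power-type bounds on $\phi,\phi',\phi''$ in Remark \ref{conseqphi3} together with the $\Delta_2$-condition for $\Phi$ and $\widetilde{\Phi}$, which make all the relevant integrals convergent and continuously differentiable in $v$, while the concave and critical terms are standard since $q$ and $\el$ are admissible for the corresponding embeddings. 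Once this regularity is established, the remainder is a routine application of the implicit function theorem and the chain rule.
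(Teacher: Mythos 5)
Your proof is correct and follows essentially the same route as the paper: both apply the implicit function theorem to the Nehari constraint map (the paper uses $F_u(\xi,w)=\langle J'_\lambda(\xi(u-w)),\xi(u-w)\rangle=\xi\,\gamma'_{u-w}(\xi)$, which differs from your $\gamma'_{u-v}(t)$ only by the harmless factor $t$, and has the same $t$-derivative $\gamma''_u(1)$ at $(1,0)$ since $\gamma'_u(1)=0$), and both obtain \eqref{ps} from $\langle\xi'(0),v\rangle=-\langle\partial_2F(1,0),v\rangle/\partial_1F(1,0)$. In fact you are somewhat more careful than the paper on two points it glosses over: the continuity argument ensuring the projection lands in $\N^{+}$ rather than merely in $\N$, and the identification of the joint $C^{1}$ regularity of $F$ as the step where $(\phi_1)$--$(\phi_3)$ and the $\Delta_2$-condition are actually used.
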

\proof Initially, we define $\psi: \w \backslash \{ 0\} \rightarrow \mathbb{R}$ given by $\psi(u)=\left<J'_\lambda(u),u\right>$ for $u \in \w \backslash \{ 0\}$. It is easy to verity that
$$\left<\psi'(u),u\right>=\Int \phi'(|\nabla u|)|\nabla u|^3+2\phi(|\nabla u|)|\nabla u|^2-\el b(x)|u|^{\el}-q\lambda a(x)|u|^{q}.$$
Recall that $\left<\psi'(u),u\right>=\gamma''_u(1)$ holds for any $u\in\N$ where $\gamma''_u(1)$ is given by Remark \ref{gamma''}.

Now we define $F_u : \mathbb{R} \times \w\backslash \{0\} \rightarrow \mathbb{R}$ given by $$F_{u}(\xi,w)= \left<J'_\lambda(\xi(u-w)),\xi(u-w)\right>.$$ Here we observe that $F_u(1,0)=\psi(u)$. As a consequence
\begin{eqnarray}
 \Fr{d}{d\xi}F_u(\xi,w)&=& 2\xi \Int \phi(\xi|\nabla(u-w)|)|\nabla(u-w)|^2+\xi^2\phi'(\xi|\nabla(u-w)|)|\nabla(u-w)|^3 \nonumber \\
 &-& \el\xi^{\el-1}\Int b(x)|u-w|^{\el}-q\xi^{q-1}\lambda a(x)|u-w|^q.\nonumber
\end{eqnarray}
In particular, for each $u\in\N$, we mention that
 \begin{eqnarray}
 \Fr{d}{d\xi}F_u(1,0)&=&\Int 2\phi(|\nabla u|)|\nabla u|^2+\phi'(|\nabla u|)|\nabla u|^3 \nonumber \\
 &-& \el\Int b(x)|u|^{\el}-q\lambda a(x)|u|^q=\gamma_u''(1)\neq 0. \nonumber
 \end{eqnarray}
 As a product, using the Inverse Function Theorem, there exist $\epsilon>0$ and a differentiable function $\xi:B(0,\epsilon)\subset W^{1,\Phi}(\Omega)\to (0, \infty)$ satisfying $\xi(0)=1$ and $F_u(\xi(w),w)= \left<J'_\lambda(\xi(u-v),\xi(u-v))\right> = 0,$ i.e. $\xi(w)(u-w)\in\N,\ \ \forall w\in B(0,\epsilon).$ Furthermore, we also obtain
 \begin{equation*}
 \left<\xi'(w),v\right>=-\Fr{\left<\partial_2F_u(\xi(w),w),v\right>}{\partial_1F_u(\xi(w),w)}, \left<\xi'(0),v\right>=-\Fr{\left<\partial_2F_u(\xi(0),0),v)\right>}{\partial_1F_u(\xi(0),0)}.
 \end{equation*}
 Here $\partial_1F_u$ and $\partial_2F_u$ denote the partial derivatives on the first and second variable, respectively.

 On the other hand, after some manipulations we see that
 $$\begin{array}{rcl}\left<\partial_2F_u(\xi(w),w),v\right>&=&\xi^2\Int\phi'(\xi|\nabla (u-w)|\Fr{\left<\nabla(u-w),-\nabla v\right>}{|\nabla(u-w)|}|\nabla(u-w)|^2)\\[2ex]
 &+&
 2\xi^2\Int\phi(\xi|\nabla (u-w))\left<\nabla(u-w),-\nabla v\right>\\[2ex]&-&\el\xi^{\el}\Int b(x)|u-w|^{\el-2}(u-w)(-v)\\[2ex]
 &-&\lambda q \xi^{q}\Int a(x)|u-w|^{q-2}(u-w)(-v).
 \end{array}$$
 Hence, putting $w=0$ and $\xi=\xi(0)=1$, the last identity just above shows that
 $$\begin{array}{rcl}-\left<\partial_2F_u(1,0),v\right>&=&\Int\phi'(|\nabla u|){\left<\nabla u,\nabla v\right>}{|\nabla u|}
 +
 2\Int\phi(|\nabla u|)\left<\nabla u,\nabla v\right>\\[2ex]&-&\el\xi^{\el}\Int b(x)|u|^{\el-2} uv
 -\lambda q a(x)|u|^{q-2}uv
 \end{array}$$
 Here was used the fact that $\partial_1 F_u(1,0)=\gamma_{u}''(1)$ holds for any $u\in\N$. The proof is now finished. \hfill\cqd

Analogously, using the same ideas discussed in the proof previous result, we get the following result

\begin{lem}\label{lem2ps}
Suppose $(\phi_{1})- (\phi_{3})$ and $(H)$. Let $u \in \N^{-}$ be fixed. Then there are $\epsilon>0$ and a differentiable function $$\xi^-:B(0,\epsilon)\subset W^{1,\Phi}(\Omega)\to (0, \infty),\ \ \ \xi^-(0)=1,\,\ \xi^-(v)(u-v)\in\N^{-}, \, v \in B(0,\epsilon).$$
Furthermore, we obtain
\begin{eqnarray}\label{ps-}
\left< (\xi^{-})^{\prime}(0),v\right>&=&\dfrac{1}{\gamma''_u(1)} \Int\left\{[\phi'(|\nabla u|)|\nabla u|+2\phi(|\nabla u|)]\nabla u\nabla v-\el b(x)|u|^{\el-2}uv\right.\nonumber\\
&-&\left.q\lambda a(x)|u|^{q-2}uv\right\}.
\end{eqnarray}
\end{lem}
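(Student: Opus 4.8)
The plan is to prove Lemma \ref{lem2ps} by repeating verbatim the argument of Lemma \ref{lem1ps}, replacing the manifold piece $\N^+$ with $\N^-$ throughout. The only structural input that differs is the sign of $\gamma''_u(1)$: on $\N^-$ we have $\gamma''_u(1)<0$ rather than $\gamma''_u(1)>0$. What matters for the argument, however, is only that $\gamma''_u(1)\neq 0$, so the proof carries over without change.

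First I would introduce $\psi:\w\setminus\{0\}\to\R$ by $\psi(u)=\left<J'_\lambda(u),u\right>$ and record, exactly as before, that $\left<\psi'(u),u\right>=\gamma''_u(1)$ for $u\in\N$, which is computed from Remark \ref{gamma''}. Next I would define $F_u:\R\times(\w\setminus\{0\})\to\R$ by $F_u(\xi,w)=\left<J'_\lambda(\xi(u-w)),\xi(u-w)\right>$, observe $F_u(1,0)=\psi(u)=0$ since $u\in\N^-\subset\N$, and differentiate in $\xi$ to get
\begin{equation*}
\tfrac{d}{d\xi}F_u(1,0)=\Int 2\phi(|\nabla u|)|\nabla u|^2+\phi'(|\nabla u|)|\nabla u|^3-\el\Int b(x)|u|^{\el}-q\lambda a(x)|u|^q=\gamma''_u(1).
\end{equation*}
Since $u\in\N^-$ gives $\gamma''_u(1)<0$, in particular $\partial_1 F_u(1,0)\neq 0$, so the Implicit Function Theorem applies: there exist $\epsilon>0$ and a differentiable $\xi^-:B(0,\epsilon)\subset W^{1,\Phi}(\Omega)\to(0,\infty)$ with $\xi^-(0)=1$ and $F_u(\xi^-(w),w)=0$, i.e. $\xi^-(w)(u-w)\in\N$ for all $w\in B(0,\epsilon)$. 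The formula for $\left<(\xi^-)'(0),v\right>$ follows from the implicit differentiation identity $\left<(\xi^-)'(0),v\right>=-\left<\partial_2 F_u(1,0),v\right>/\partial_1 F_u(1,0)$, and the computation of $\partial_2 F_u(1,0)$ is identical to that in Lemma \ref{lem1ps}, yielding \eqref{ps-}.

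The one point needing a separate (but easy) remark is that $\xi^-(w)(u-w)$ lands in $\N^-$ and not merely in $\N$: shrinking $\epsilon$ if necessary, continuity of $w\mapsto\gamma''_{\xi^-(w)(u-w)}(1)$ together with the open condition $\gamma''_{\cdot}(1)<0$ and the fact that $\N^0=\emptyset$ for $\lambda<\Lambda_1$ (Lemma \ref{c1}) forces membership in $\N^-$ for small $w$. I do not expect any real obstacle here; the main thing to be careful about is only bookkeeping — that the sign change $\gamma''_u(1)<0$ never enters except through its nonvanishing — so the proof is genuinely a transcription of the previous one. I would close with ``Arguing exactly as in the proof of Lemma \ref{lem1ps}, with $\N^+$ replaced by $\N^-$ and using $\gamma''_u(1)<0$, the conclusion follows. \hfill\cqd''
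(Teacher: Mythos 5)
Your proposal is correct and matches the paper exactly: the paper itself gives no separate argument for this lemma, stating only that it follows ``analogously, using the same ideas discussed in the proof of the previous result,'' which is precisely your transcription of Lemma \ref{lem1ps} with $\gamma''_u(1)<0$ replacing $\gamma''_u(1)>0$ and only its nonvanishing being used in the Implicit Function Theorem. Your closing remark on why $\xi^-(w)(u-w)$ lands in $\N^-$ rather than merely in $\N$ (continuity of $\gamma''_{\cdot}(1)$ plus $\N^0=\emptyset$) is a detail the paper leaves implicit even in Lemma \ref{lem1ps}, and it is handled correctly.
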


 In the next result we shall prove that any minimizer sequence for the functional $J$ in $\mathcal{N}_{\lambda}^{-}$ or $\mathcal{N}_{\lambda}^{+}$ is bounded from below and above for some positive constants. This is crucial in order to get a minimizer on the Nehari manifold.

\begin{prop} Suppose $(\phi_{1})- (\phi_{3})$ and $(H)$.
	Let $(u_n)$ be a minimizer sequence for the functional $J$ on the Nehari manifold $\mathcal{N}_{\lambda}^{+}$. Then
	\begin{equation}\label{12b}
	\liminf_{n \rightarrow \infty} ||u_n|| \geq - \alpha_\lambda^{\frac{1}{q}} \left[\Fr{\el q}{(\el-q)\lambda\|a\|_\infty S^{q}_{q}}\right]^{\frac{1}{q}} > 0
	\end{equation}
	and
	\begin{equation}\label{u_n-lim}
	 ||u_n||< \left[\Fr{\lambda}{q}\left(\Fr{\el-q}{\el-m}\right)\|a\|_\infty S^{q}_{q}\right]^{\frac{1}{\alpha-q}},
	\end{equation}
	where $\alpha\in\{\ell,m\}$. The same property can be ensured for the Nehari manifold $\mathcal{N}_{\lambda}^{-}$, i.e, we have that $(u_n) \in \mathcal{N}_{\lambda}^{-}$ is bounded form above and below by positive constants.
\end{prop}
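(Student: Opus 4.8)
The plan is to derive the two bounds for minimizing sequences in $\mathcal{N}_\lambda^{+}$ directly from the identities already available on the Nehari manifold, namely \eqref{eq3}, \eqref{functional1}, the structural inequalities in Remark~\ref{conseqphi3}, and the sign information encoded in $\gamma''_u(1)>0$; the case of $\mathcal{N}_\lambda^{-}$ will be handled by exactly the same manipulations since the only place the sign of $\gamma''_u(1)$ enters is in one inequality that we shall use in both directions. Throughout I write $\alpha=\ell$ when $\|u_n\|\ge 1$ and $\alpha=m$ when $\|u_n\|\le 1$, so that $\min\{\|u_n\|^\ell,\|u_n\|^m\}=\|u_n\|^\alpha$, as in Lemma~\ref{c1}.

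First I would prove the lower bound \eqref{12b}. Since $(u_n)\subset \mathcal{N}_\lambda^{+}$ is a minimizing sequence, $J_\lambda(u_n)\to\alpha_\lambda<0$ (recall $\alpha_\lambda=\alpha_\lambda^{+}$ by Lemma~\ref{nehari+}). Starting from \eqref{eq3} and discarding the nonnegative term $\int_\Omega\Phi(|\nabla u_n|)-\frac1{\el}\phi(|\nabla u_n|)|\nabla u_n|^2\ge 0$ (which is $\ge 0$ because $\phi(t)t^2\le m\Phi(t)\le \el\Phi(t)$ fails in the wrong direction — so instead I use $\Phi(t)\ge \frac1m\phi(t)t^2$, giving $\int\Phi-\frac1\el\phi|\nabla u_n|^2\ge(\frac1m-\frac1\el)\int\phi|\nabla u_n|^2\ge 0$), we get
\[
J_\lambda(u_n)\;\ge\;-\lambda\Big(\Fr1q-\Fr1\el\Big)\Int a(x)|u_n|^{q}\;\ge\;-\lambda\Big(\Fr1q-\Fr1\el\Big)\|a\|_\infty S_q^{q}\|u_n\|^{q},
\]
using the Sobolev embedding $\w\hookrightarrow L^q(\Omega)$ with constant $S_q$. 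Rearranging and taking $\liminf_{n}$, together with $J_\lambda(u_n)\to\alpha_\lambda<0$, yields $\|u_n\|^{q}\ge -\alpha_\lambda\big[\frac{\el q}{(\el-q)\lambda\|a\|_\infty S_q^{q}}\big]$, which is \eqref{12b} after taking $q$-th roots; strict positivity is immediate since $\alpha_\lambda<0$.

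Next I would prove the upper bound \eqref{u_n-lim}. Here the point is to use that $u_n\in\mathcal{N}_\lambda^{+}$ forces $\gamma''_{u_n}(1)>0$, i.e. by the second line of \eqref{eq5},
\[
\Int \phi'(|\nabla u_n|)|\nabla u_n|^3+(2-\el)\phi(|\nabla u_n|)|\nabla u_n|^2\;>\;\lambda(q-\el)\Int a(x)|u_n|^{q}.
\]
Bounding $\phi'(t)t\le (m-2)\phi(t)$ on the left and $\int a(x)|u_n|^q\le \|a\|_\infty S_q^q\|u_n\|^q$ after moving the negative factor $(q-\el)$ across, one obtains
\[
(\el-m)\Int\phi(|\nabla u_n|)|\nabla u_n|^2\;<\;\lambda(\el-q)\|a\|_\infty S_q^q\|u_n\|^{q}.
\]
Combining with $\int\phi(|\nabla u_n|)|\nabla u_n|^2\ge \ell\int\Phi(|\nabla u_n|)\ge \ell\|u_n\|^\alpha$ gives $\ell(\el-m)\|u_n\|^\alpha< \lambda(\el-q)\|a\|_\infty S_q^q\|u_n\|^{q}$, hence $\|u_n\|^{\alpha-q}< \frac{\lambda(\el-q)}{\ell(\el-m)}\|a\|_\infty S_q^q$, which is \eqref{u_n-lim} after reorganizing constants and taking the $(\alpha-q)$-th root (note $\alpha-q>0$ since $\alpha\ge\ell>q$ by $(H)$). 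Finally, for $\mathcal{N}_\lambda^{-}$: the lower bound follows from Lemma~\ref{nehari-}, which already gives $\|u_n\|\ge[\ell(\ell-q)/((\el-q)S_{\el}\|b^+\|_\infty)]^{1/(\el-\alpha)}>0$, and the upper bound follows because on $\mathcal{N}_\lambda^{-}$ the energy $J_\lambda$ is bounded on a minimizing sequence (it is bounded below by $\delta_1>0$ and above since it converges), so coercivity of $J_\lambda$ on $\mathcal{N}_\lambda$ (Proposition~\ref{coercive}) forces $\sup_n\|u_n\|<\infty$.

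The main obstacle is getting the constants to match the precise form stated in \eqref{12b} and \eqref{u_n-lim}: one must be careful about which Sobolev constant ($S_q$ versus $S_\ell$ versus $S_{\el}$) and which N\"obfunction inequality from Remark~\ref{conseqphi3} is invoked at each step, and about the case split $\alpha\in\{\ell,m\}$ tied to whether $\|u_n\|\gtrless 1$ — in particular \eqref{u_n-lim} is only informative (gives a genuine bound) once one checks that the displayed right-hand side is consistent with the dichotomy, which is where hypothesis $(H)$, specifically $m<\el$ and $q<\ell$, is used. The analytic content is otherwise routine, being entirely algebraic manipulation of the Nehari identities.
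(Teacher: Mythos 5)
Your argument is correct, and it diverges from the paper's in one of the two bounds. For the lower bound \eqref{12b} you do exactly what the paper does: combine the identity \eqref{eq3} with $\Phi(t)\ge\frac1m\phi(t)t^2$ to get $J_\lambda(u_n)\ge -\lambda(\frac1q-\frac1{\el})\int_\Omega a|u_n|^q$, then use $J_\lambda(u_n)\to\alpha_\lambda<0$ and the embedding into $L^q$. For the upper bound \eqref{u_n-lim}, however, the paper never invokes $\gamma''_{u_n}(1)>0$: it extracts the bound from the \emph{same} chain of inequalities, namely $(1-\frac{m}{\el})\int_\Omega\Phi(|\nabla u_n|)<\lambda\frac{\el-q}{q\el}\|a\|_\infty S_q^q\|u_n\|^q$, which uses only that the energy is eventually negative; you instead exploit the defining inequality of $\N^+$ via \eqref{eq5} and $\phi'(t)t\le(m-2)\phi(t)$, arriving at $\ell(\el-m)\|u_n\|^\alpha<\lambda(\el-q)\|a\|_\infty S_q^q\|u_n\|^q$. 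Your constant has $\ell$ where \eqref{u_n-lim} has $q$; since $q<\ell$ your bound is the sharper one and implies the stated inequality, so this is not a defect, but you should say explicitly that the displayed constant follows by enlarging yours rather than ``reorganizing.'' The trade-off between the two routes is real: the paper's works for any sequence in $\N$ with eventually negative energy, yours works for every element of $\N^+$ regardless of energy level; neither works on $\N^-$ (where the energy is positive and $\gamma''_{u_n}(1)<0$), and you are right to treat that case separately via the Lemma \ref{c1}/\ref{nehari-} lower bound plus coercivity --- the paper asserts the $\N^-$ statement without proof, so your explicit argument there is a genuine improvement in completeness.
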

\nd \proof Remembering that $(u_n)\subset\N$, $m\Phi(t)\leq \phi(t)t^2$  and using the inequalities just above, we obtain that
\begin{equation}
\begin{array}{rcl}\label{12}
0> \alpha_\lambda^{+} + o_{n}(1) &>&J_\lambda(u_n)=\Int \Phi(|\nabla u_n|)\\[2ex]
&-&\displaystyle\frac{1}{\el}\phi(|\nabla u_n|) |\nabla u_n|^{2}-\lambda\left(\frac{1}{q}-\frac{1}{\el}\right)a(x)|u_n|^q\\[2ex]
&\geq& \Int\left(1-\Fr{m}{\el}\right) \Phi(|\nabla u_n|)-\lambda\left(\frac{1}{q}-\frac{1}{\el}\right)a(x)|u_n|^q
\end{array}
\end{equation}
holds for any $n\in\mathbb{N}$ large enough.  Under these conditions, using the above inequality and the continuous embedding $W_0^{1,\Phi}(\Omega)\hookrightarrow L^q(\Omega)$, we easily see that
$$0<-\left(\alpha_\lambda^++\Fr{1}{n}\right)\left[\Fr{\el q}{(\el-q)\lambda}\right]<\Int a(x)|u|^q\leq ||a||_\infty S^{q}_{q}||u_n||^q.$$
As a product the last estimate says that
\begin{equation*}\label{12a}
||u_n||> \left[-\left(\alpha_\lambda^++\Fr{1}{n}\right)\Fr{\el q}{(\el-q)\lambda\|a\|_\infty S^{q}_{q}}\right]^{\frac{1}{q}}.
\end{equation*}
As a consequence using the last estimate and Lemma \ref{nehari+} we see also that \eqref{12b} holds.

Furthermore, using $\eqref{12}$ and arguing as in the previous inequalities, we can also shown that
\begin{eqnarray}
\displaystyle \min\{||u_n||^\ell,||u_n||^m\}\leq \Int \Phi(|\nabla u_n|)&<& \lambda\left(\Fr{\el}{\el-m}\right)\left(\Fr{\el -q}{\el q}\right) \|a\|_\infty S^{q}_{q} ||u_n||^q \nonumber \\
&=& \Fr{\lambda}{q}\left(\Fr{\el-q}{\el-m}\right)\|a\|_\infty S^{q}_{q}||u_n||^q. \nonumber \\
\end{eqnarray}
Hence the last assertions give us
 $$\min\{||u_n||^{\ell-q},||u_n||^{m-q}\}<\lambda\left(\Fr{\el}{\el-m}\right)\left(\Fr{\el -q}{\el q}\right)\|a\|_\infty S^{q}_{q}= \Fr{\lambda}{q}\left(\Fr{\el-q}{\el-m}\right)\|a\|_\infty S^{q}_{q}.$$
 As a consequence we obtain \eqref{u_n-lim}.\hfill\cqd

Now we consider two technical results in order to prove that any minimizer sequence for $J$ on the Nehari manifold is a Palais-Smale sequence.

\begin{prop}\label{est-J'} Suppose $(\phi_{1})- (\phi_{3})$ and $(H)$.
	Then any minimizer sequence $(u_n)$ on the Nehari manifold $\mathcal{N}_{\lambda}^{-}$ or $\mathcal{N}_{\lambda}^{+}$  satisfies
	\begin{equation}\label{17d}
	\left <J'_\lambda(u_n), \Fr{u}{||u||}\right>\leq \Fr{C}{n}[||\xi'_n(0)||+1],~u\in W^{1,\Phi}(\Omega)/\{0\},
	\end{equation}
	where $\xi_n:=\xi:B_{\frac{1}{n}}(0)\rightarrow (0,\infty)$  was obtained by Lemma \ref{lem1ps}.
\end{prop}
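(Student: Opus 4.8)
The plan is to follow the classical Nehari/Tarantello trick of using the differentiable path produced in Lemma~\ref{lem1ps} (respectively Lemma~\ref{lem2ps}) to turn the Ekeland-type property of a minimizing sequence into a quantitative bound on $\langle J_\lambda'(u_n),u/\|u\|\rangle$. First I would recall that since $(u_n)$ is a minimizing sequence for $J_\lambda$ on $\N^+$ (the argument for $\N^-$ is identical, using Lemma~\ref{lem2ps} and the fact that $J_\lambda$ is bounded below there by Lemma~\ref{nehari-}), we may invoke Ekeland's variational principle on the $C^1$-manifold $\N^+$ (which is a genuine manifold by Lemma~\ref{c1}) to assume, after passing to a subsequence, that
\begin{equation*}
J_\lambda(u_n)\le \alpha_\lambda^+ + \tfrac1n,\qquad J_\lambda(w)\ge J_\lambda(u_n)-\tfrac1n\|w-u_n\|\quad\text{for all }w\in\N^+.
\end{equation*}

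Next, fix $u\in W^{1,\Phi}(\Omega)\setminus\{0\}$ and for small $\rho>0$ set $v=\rho\, u/\|u\|\in B(0,\epsilon)$ (for $\rho<\epsilon$), and let $w_\rho:=\xi_n(v)(u_n-v)\in\N^+$ be the point furnished by Lemma~\ref{lem1ps} applied at $u_n$. Plugging $w_\rho$ into the Ekeland inequality gives
\begin{equation*}
\tfrac1n\|w_\rho-u_n\|\ge J_\lambda(u_n)-J_\lambda(w_\rho)\ge -\big\langle J_\lambda'(w_\rho),\,w_\rho-u_n\big\rangle + o(\|w_\rho-u_n\|),
\end{equation*}
using the $C^1$ regularity of $J_\lambda$. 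I would then expand $w_\rho-u_n=(\xi_n(v)-1)(u_n-v)-\xi_n(v)v$, divide by $\rho$, and let $\rho\to0^+$. Since $\xi_n(0)=1$ and $\xi_n$ is differentiable with derivative controlled by \eqref{ps}, the difference quotient $(\xi_n(v)-1)/\rho$ converges to $\langle\xi_n'(0),u/\|u\|\rangle$, and $w_\rho\to u_n$, $J_\lambda'(w_\rho)\to J_\lambda'(u_n)$. Because $u_n\in\N^+$ we have $\langle J_\lambda'(u_n),u_n\rangle=0$, so the term carrying $\langle\xi_n'(0),\cdot\rangle(u_n-v)$ contributes (in the limit) a multiple of $\langle J_\lambda'(u_n),u_n\rangle=0$; what survives is exactly $\langle J_\lambda'(u_n),u/\|u\|\rangle$ on the right, while the left side is bounded by $\tfrac1n$ times $\|w_\rho-u_n\|/\rho\to \|\langle\xi_n'(0),u/\|u\|\rangle(u_n) - u/\|u\|\|$, and since $\|u_n\|$ is bounded above by \eqref{u_n-lim} this is $\le C(\|\xi_n'(0)\|+1)$ for a constant $C$ independent of $n$. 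Collecting terms yields precisely \eqref{17d}.

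The main obstacle I expect is the careful bookkeeping of the limit $\rho\to0^+$: one must justify interchanging the limit with $J_\lambda'$ (continuity of the derivative, plus the uniform bound $\|u_n\|\le C$ from \eqref{u_n-lim} so that everything stays in a fixed ball), and one must check that the ``bad'' term involving $\langle\xi_n'(0),u/\|u\|\rangle\,u_n$ really is annihilated by $\langle J_\lambda'(u_n),u_n\rangle=0$ rather than merely being small. A secondary point requiring care is that the constant $C$ in \eqref{17d} must be uniform in $n$; this is where \eqref{u_n-lim} (uniform upper bound on $\|u_n\|$) and the non-degeneracy $\gamma_{u_n}''(1)$ bounded away from $0$ on $\N^\pm$ — a consequence of $\N^0=\emptyset$ together with the estimates in Lemma~\ref{c1} — are essential, since $\xi_n'(0)$ is divided by $\gamma_{u_n}''(1)$ in \eqref{ps}.
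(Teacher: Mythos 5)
Your proposal is correct and follows essentially the same route as the paper: Ekeland's variational principle on $\N^{\pm}$, the differentiable projection $\xi_n$ from Lemma \ref{lem1ps}, a first-order expansion of $J_\lambda(\mu_\rho)-J_\lambda(u_n)$ with $\mu_\rho=\xi_n(w_\rho)(u_n-w_\rho)$, the Nehari constraint to annihilate the term carrying $\xi_n'(0)$ against $u_n$, and the uniform bound \eqref{u_n-lim} on $\|u_n\|$ to control $\|\mu_\rho-u_n\|/\rho$ by $C(\|\xi_n'(0)\|+1)$. The only cosmetic difference is that the paper eliminates the cross term by subtracting $\langle J_\lambda'(\mu_\rho),\mu_\rho\rangle=0$ and invoking $J_\lambda'(\mu_\rho)\to J_\lambda'(u_n)$, whereas you use $\langle J_\lambda'(u_n),u_n\rangle=0$ directly; both work.
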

\proof According to Lemma \ref{lem1ps}, we obtain
$$\xi_n:B(0,\epsilon_n) \to \mathbb{R}^+,\ \ \ \xi(0)=1,\,\ \xi(w)(u_n-w)\in\N^+.$$
Now, we put $\rho \in (0, \epsilon_n)$ and $u\in W^{1,\Phi}(\Omega)\backslash \{0\}$. Define the auxiliary function
$$w_\rho=\Fr{\rho u}{||u||}\in B(0,\epsilon_n).$$
Using one more time Lemma \ref{lem1ps} and $(ii)$ we mention that
\begin{equation}\label{177e}
\mu_\rho=\xi(w_\rho)(u_n-w_\rho)\in\N^+ \,\,\mbox{ and } \,\, J_\lambda(\mu_\rho)-J_\lambda(u_n)\geq- \frac{1}{n}||\mu_\rho-u_n||.
\end{equation}
Notice also that
 \begin{equation} \label{17c}
 w_\rho\to 0,\ \xi_n(w_\rho)\to 1, \ \mu_\rho\to u_n \mbox{ and } J'_\lambda(\mu_\rho)\to J'_\lambda(u_n)
 \end{equation}
as $\rho\to 0$ holds true for any $n \in \mathbb{N}$.

At this moment, applying Mean Value Theorem, there exists $t\in(0,1)$ in such way that
$$\begin{array}{rcl}J_\lambda(\mu_\rho)-J_\lambda(u_n)&=&\left<J'_\lambda((1-t)\mu_\rho+tu_n),\mu_\rho-u_n\right>\\[2ex]
&=&\left<J'_\lambda(\mu_\rho+t(u_n-\mu_\rho)) -J'_\lambda(u_n),\mu_\rho-u_n \right>\\[2ex]
&+&\left<J'_\lambda(u_n),\mu_\rho-u_n\right>.
\end{array}$$
It is worthwhile to mention that $||u_n-\mu_\rho||\to 0$ as $\rho \to 0$. Hence, using \eqref{177e} and \eqref{17c}, we easily see that
\begin{equation*}
-\frac{1}{n}||\mu_\rho-u_n||\leq \left<J'_\lambda(u_n),\mu_\rho-u_n \right> + o_{\rho}(||\mu_\rho-u_n||)
\end{equation*}
where $o_{\rho}(.)$ denotes a quantity that goes to zero as $\rho$ goes to zero. Taking into account that $\mu_\rho\in\N^+$ it follows that
\begin{equation*}\label{16}
-\frac{1}{n}||\mu_\rho-u_n|| +o_{\rho}(||\mu_\rho-u_n||)\leq \left<J'_\lambda(u_n),-w_\rho\right>+(\xi_n(w_\rho)-1)\left<J'_\lambda(u_n),u_n-w_\rho\right>.
\end{equation*}
Furthermore, using the fact that $\left<J_\lambda^{\prime}(\mu_\rho),\mu_\rho\right>=0$, we mention that
\begin{eqnarray*}
-\frac{1}{n}||\mu_\rho-u_n|| &\leq& o_{\rho}(||\mu_\rho-u_n||) - \rho\left<J'_\lambda(u_n),\frac{u}{||u||}\right> \nonumber \\
&+&(\xi_n(w_\rho)-1)\left<J'_\lambda(u_n)-J'_\lambda(\mu_\rho),u_n-w_\rho\right>. \nonumber \\
\end{eqnarray*}
As a consequence the last estimates and \eqref{17c} say that
 \begin{eqnarray}
 \left<J'_\lambda(u_n),\frac{u}{||u||}\right> &\leq& \Fr{||\mu_\rho-u_n||}{n\rho}+\Fr{o_{\rho}(||\mu_\rho-u_n||)}{\rho}
\nonumber \\
&+& \Fr{(\xi_n(w_\rho)-1)}{\rho}\left<J'_\lambda(u_n)-J'_\lambda(\mu_\rho),u_n-w_\rho\right>.  \nonumber
\end{eqnarray}
It is no hard to see that
\begin{equation}\label{17a}
||\mu_\rho-u_n||\leq \rho|\xi_n(w_\rho)|+|\xi_n(w_\rho)-1| \,||u_n||\mbox { and } \ds\lim_{\rho\to 0}\Fr{|\xi_n(w_\rho)-1|}{\rho}\leq||\xi'_n(0)||.
\end{equation}
The last inequality is justified due the fact that
$$\ds\lim_{\rho\to 0}\Fr{|\xi_n(w_\rho)-1|}{\rho}=\left<\xi'_n(0),\frac{u}{||u||}\right>\leq ||\xi'_n(0)||.$$
Therefore, using the fact that $(u_n)$ is bounded and \eqref{17a}, we infer that
$$\begin{array}{rcl}
\ds\lim_{\rho\to 0} \Fr{||\mu_\rho-u_n||}{\rho n}&\leq&\ds\lim_{\rho\to 0}\Fr{1}{n}\left[||\xi_n(w_\rho)||+\Fr{|\xi_n(w_\rho)-1|}{\rho}||u_n||\right]\\[2ex]
&\leq& \Fr{1}{n}\left[1+||\xi'_n(0)||\,\,||u_n||\right]\leq \Fr{C}{n}\left[1+||\xi'_n(0)||\right].
\end{array}$$

On the other hand, using the fact that $\Fr{\xi_n(w_\rho)-1}{\rho}$ and $\xi_n(w_\rho)$ are bounded for $\rho>0$ small enough, we easily see that
 $$\begin{array}{rcl}
\|\mu_\rho-u_n \|&=&|\rho|\left|\left|\Fr{\xi_n(w_\rho)-1}{\rho}u_n-\xi_n(w_\rho)\Fr{u}{||u||}\right|\right|\\[2ex]
&\leq& |\rho|\left[\left|\Fr{\xi_n(w_\rho)-1}{\rho}\right|||u_n||+|\xi_n(w_\rho)|\right].%\Fr{u_n}{||u_n||
\end{array}$$
Since $(u_n)$ is bounded there exists a constant $C>0$ in such way that
\begin{equation*}\label{17b}
\Fr{||\mu_\rho-u_n||}{\rho}\leq C[||\xi'_n(0)||+1].
\end{equation*}
Putting all these estimates together we employ that there exists a constant $C > 0$ which is independent in $\rho>0$ in such way that \eqref{17d} holds. This ends the proof. \hfill\cqd

Now we shall consider a technical result in order to get Palais-Smale sequences on the Nehari manifold $\mathcal{N}_{\lambda}^{+}$ or $\mathcal{N}_{\lambda}^{-}$.

\begin{prop}\label{xi_n-uni} Suppose $(\phi_{1})- (\phi_{3})$ and $(H)$. Then given any minimizer sequence $(u_n)$ on the Nehari manifold $\mathcal{N}_{\lambda}^{-}$ or $\mathcal{N}_{\lambda}^{+}$ we obtain
\begin{equation}
||\xi'_n(0)|| \leq C \, \, \mbox{for each} \,\, n \in \mathbb{N}
\end{equation}
where $C > 0$ is independent on $n$. Here we recall that $\xi_n:=\xi:B_{\frac{1}{n}}(0)\rightarrow (0,\infty)$ was obtained by Lemma \ref{lem1ps}.
\end{prop}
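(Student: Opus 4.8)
The plan is to estimate, term by term, the formula for $\xi_n'(0)$ furnished by Lemma \ref{lem1ps} (with $u$ replaced by $u_n$; by Lemma \ref{lem2ps} when $(u_n)\subset\mathcal{N}_\lambda^-$). From \eqref{ps}, for every $v\in\w$,
$$
|\left<\xi_n'(0),v\right>|\le\dfrac{1}{|\gamma''_{u_n}(1)|}\,\Big|\Int\big\{[\phi'(|\nabla u_n|)|\nabla u_n|+2\phi(|\nabla u_n|)]\nabla u_n\nabla v-\el b(x)|u_n|^{\el-2}u_nv-q\lambda a(x)|u_n|^{q-2}u_nv\big\}\Big|.
$$
Thus the proposition reduces to two facts: that the integral on the right is bounded by $C\|v\|$ with $C$ independent of $n$, and that $|\gamma''_{u_n}(1)|$ is bounded below by a positive constant independent of $n$; dividing one by the other then gives the claim.

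For the first fact I would use $(\phi_3)$ (Remark \ref{conseqphi3}) in the form $\phi'(t)t\le(m-2)\phi(t)$ to dominate the gradient terms by $m\Int\phi(|\nabla u_n|)|\nabla u_n|\,|\nabla v|$, and then apply the Hölder inequality in Orlicz spaces together with the bound $\widetilde{\Phi}(\phi(t)t)\le(m-1)\Phi(t)$ (a consequence of Remark \ref{conseqphi3} and Young's equality) and the boundedness of the modular $\Int\Phi(|\nabla u_n|)$, which follows from the boundedness of $(u_n)$ established above, cf. \eqref{u_n-lim}. The terms carrying $a$ and $b$ are handled by the Hölder inequality in Lebesgue spaces, the continuous embeddings $\w\hookrightarrow L^{\el}(\Omega)$, $\w\hookrightarrow L^{q}(\Omega)$, and again the boundedness of $(u_n)$. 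This yields $\|\xi_n'(0)\|\le C\,|\gamma''_{u_n}(1)|^{-1}$ with $C$ independent of $n$.

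The second fact is the expected main obstacle, since $\mathcal{N}^0=\emptyset$ (Lemma \ref{c1}) alone only gives $\gamma''_{u_n}(1)\ne 0$ pointwise; I would prove the uniform bound by contradiction, reproducing the dichotomy in the proof of Lemma \ref{c1}. Assume a subsequence satisfies $\gamma''_{u_{n_k}}(1)\to0$ and relabel so that $\gamma''_{u_n}(1)\to0$. Combining the first identity in \eqref{eq5} with $\phi'(t)t\ge(\ell-2)\phi(t)$ and $\min\{\|u\|^\ell,\|u\|^m\}\le\Int\Phi(|\nabla u|)\le\frac1\ell\Int\phi(|\nabla u|)|\nabla u|^2$ gives
$$
\ell(\ell-q)\|u_n\|^{\alpha}\le\gamma''_{u_n}(1)+(\el-q)S_{\el}\|b^+\|_\infty\|u_n\|^{\el},\qquad\alpha\in\{\ell,m\};
$$
since $(u_n)$ is bounded away from $0$ (cf. \eqref{12b}), dividing by $\|u_n\|^{\alpha}$ and letting $n\to\infty$ yields $\liminf_n\|u_n\|^{\el-\alpha}\ge\ell(\ell-q)/[(\el-q)S_{\el}\|b^+\|_\infty]$. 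Analogously, the second identity in \eqref{eq5}, $\phi'(t)t\le(m-2)\phi(t)$ and the Hölder estimate $\Int a(x)|u_n|^{q}\le\|a^+\|_{(\ell/q)'}S_{\ell}\|u_n\|^{q}$ give
$$
\ell(\el-m)\|u_n\|^{\alpha}\le\lambda(\el-q)S_{\ell}\|a^+\|_{(\ell/q)'}\|u_n\|^{q}-\gamma''_{u_n}(1),
$$
so $\limsup_n\|u_n\|^{\alpha-q}\le\lambda(\el-q)S_{\ell}\|a^+\|_{(\ell/q)'}/[\ell(\el-m)]$. Raising the first bound to the power $(\alpha-q)/(\el-\alpha)>0$ (positive by $(H)$) and comparing with the second forces $\lambda\ge\lambda_1$, with $\lambda_1$ exactly as in \eqref{Lambda1}, contradicting $\lambda<\lambda_1$. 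Hence $\liminf_n|\gamma''_{u_n}(1)|>0$, and since $\gamma''_{u_n}(1)\ne0$ for each $n$ (because $\mathcal{N}^0=\emptyset$), we conclude $\inf_n|\gamma''_{u_n}(1)|>0$.

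It should be stressed that only $\gamma''_{u_n}(1)\to0$, and not its sign, is used in the contradiction step, so the argument applies verbatim to minimizing sequences in $\mathcal{N}_\lambda^+$ and in $\mathcal{N}_\lambda^-$. Combining the two facts above produces a constant $C>0$, independent of $n$, with $\|\xi_n'(0)\|\le C$, which is the assertion.
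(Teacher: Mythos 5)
Your proposal is correct and follows essentially the same route as the paper: bound the numerator $\chi_n(v)$ of \eqref{ps} by $C\|v\|$ via $(\phi_3)$, Orlicz--H\"older and the uniform bound \eqref{u_n-lim}, then obtain a uniform positive lower bound for $|\gamma''_{u_n}(1)|$ by contradiction, rerunning the two-sided estimates of Lemma \ref{c1} with $o_n(1)$ corrections and using \eqref{12b} to keep $\|u_n\|$ away from zero. The only cosmetic difference is your use of $\widetilde{\Phi}(t\phi(t))\le(m-1)\Phi(t)$ in place of the paper's $\widetilde{\Phi}(t\phi(t))\le\Phi(2t)\le 2^m\Phi(t)$, which changes nothing essential.
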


\proof Notice that the numerator in \eqref{ps} is bounded from below away zero by $ b ||v||$ where $b> 0$ is a constant. In order to prove the last assertion we shall consider some estimates. Initially, we define the auxiliary function $\chi_n : \w \rightarrow \mathbb{R}$ given by
$$\begin{array}{rcl}\chi_n(v)&=&\Int[\phi'(|\nabla u_n|)|\nabla u_n|+2\phi(|\nabla u_n|)]\nabla u_n\nabla v\\[2ex]&-&\el b(x)|u_n|^{\el-2}u_nv-q\lambda a(x)|u_n|^{q-2}u_nv.\end{array}$$
It is easy to verity that
\begin{equation*}\label{1a}
\begin{array}{rcl}|\chi_n(v)|&\leq&
\Int[ |\phi'(|\nabla u_n|)| |\nabla u_n|^2+2\phi(|\nabla u_n|)|\nabla u_n|]|\nabla v|\\[3ex]
&+&||b^+||_\infty\el\Int |u_n|^{\el-1}|v|+\lambda q||a^+||_\infty\Int |u_n|^{q-1}|v|.
\end{array}
\end{equation*}
Now using Remark \ref{conseqphi3} we see that $\Fr{|\phi'(t)t|}{\phi(t)}\leq \max\{|\ell-2|,|m-2|\}:=C_1$. Thus, using Holder's inequality, we also see that
$$\begin{array}{rcl}|\chi_n(v)|&\leq& C_1\Int\phi(|\nabla u_n|)|\nabla u_n||\nabla v|\\[2ex]
&+&||b^+||_\infty\el\Int |u_n|^{\el-1}|v|+\lambda q||a^+||_\infty\Int |u_n|^{q-1}|v|\\[2ex]
&\leq& 2C_1||\phi(|\nabla u_n|)|\nabla u_n|||_{\tilde{\Phi}}||v||\\[2ex]
&+&||b^+||_\infty\el\Int |u_n|^{\el-1}|v|+\lambda q||a^+||_\infty\Int |u_n|^{q-1}|v|\\[2ex]
&\leq& C_2 \max\left\{\left(\Int \tilde{\Phi}(\phi(|\nabla u_n|))|\nabla u_n|\right)^{\frac{\ell-1}{\ell}},\left(\Int \tilde{\Phi}(\phi(|\nabla u_n|))|\nabla u_n|\right)^{\frac{m-1}{m}}\right\} ||v|| \\[2ex]
&+&||b^+||_\infty\el\Int |u_n|^{\el-1}|v|+\lambda q||a^+||_\infty\Int |u_n|^{q-1}|v|.
\end{array}
$$
In view of inequality $\widetilde\Phi(t\phi(t))\leq \Phi(2t)\leq 2^m\Phi(t), t \geq 0$ and \eqref{u_n-lim} there exists a constant $C_3>0$ such that
$$\begin{array}{rcl}|\chi_n(v)|
&\leq& C_3\max\left\{\left(\Int \Phi(|\nabla u_n|)\right)^{\frac{\ell-1}{\ell}},\left(\Int \Phi(|\nabla u_n|)\right)^{\frac{m-1}{m}}\right\}||v||\\[2ex]
&+&||b^+||_\infty\el\Int |u_n|^{\el-1}|v|+\lambda q||a^+||_\infty\Int |u_n|^{q-1}|v|\\[2ex]
&\leq&
C_3 || u_n||^\beta||v||+||b^+||_\infty\el\Int |u_n|^{\el-1}|v|+\lambda q||a^+||_\infty\Int |u_n|^{q-1}|v|\\[2ex]
&\leq& C_4||v||+||b^+||_\infty\el\Int |u_n|^{\el-1}|v|+\lambda q||a^+||_\infty\Int |u_n|^{q-1}|v|.
\end{array}
$$ where $\beta\in\{\ell-1,\frac{\ell}{m}(\ell-1),m-1,\frac{m}{\ell}(m-1)\}$.

At this stage, we shall estimate the terms $\Int |u_n|^{\el-1}|v| $ and $\Int |u_n|^{q-1}|v|$.
In order to do that we employ Holder's inequality and Sobolev imbedding proving that
$$\Int |u_n|^{\el-1}|v|\leq \left(\Int|u_n|^{\el}\right)^{\frac{\el-1}{\el}}\left(\Int|v|^{\el}\right)^{\frac{1}{\el}}\leq C_5 \|u_n\|^{\ell^{*}-1} \|v\| \leq C_6 \|v\|.$$
In view of the estimates above there exists a constant $c>0$ in such that $|\chi_n(v)|\leq c||v||$. Here we emphasize that estimate \eqref{u_n-lim} says that $c$ is independent on $n \in \mathbb{N}$.

It remains to show that there exists a constant $d>0$, independent in $n$, in such way that $\gamma''_{u_n}(1)\geq d$. The proof follows arguing by contradiction assuming that $\gamma''_{u_n}(1) = o_n(1)$. It follows from \eqref{12b} that there exists $a_\lambda>0$ satisfying
 \begin{equation}\label{nozero}
 \ds\liminf_{n \rightarrow \infty}||u_n||\geq a_\lambda>0
\end{equation}

%%%%%%%%%%%%%%%%%%%%%%%%%%%%%%%%%%%%%%%%%%%%%%%%%%%%%%%%%%%%%%%%%%%%%%%%%%%%%%%%%%%%%%%%%%%%%%%%%%%%%%%%%%%%%%%%%%%%%%
%%%%%%%%%%%%%%%%%%%%%%%%%%%%%%%%%%%%%%%%%%%%%%%%%%%%%%%%%%%%%%%%%%%%%%%%%%%%%%%%%%%%%%%%%%%%%%%%%%%%%%%%%%%%%%%%%%%%%%%%%%%%%%%%%%%%%%%%%%%%%%%%%%%%%
At this moment we emphasize that $\gamma''_{u_n}(1) = o_n(1)$. Using \eqref{nehari} and \eqref{eq5} we deduce that
$$o_n(1)=\gamma''_{u_n}(1)=\Int (2-q)\phi(|\nabla u_n| )|\nabla u_n|^2+\phi'(|\nabla u_n|)||\nabla u|^3+(q-\el)b|u_n|^{\el}.$$
%Under hypothesis
Using \eqref{conseqphi3} and Sobolev embeddings we also mention that
\begin{eqnarray}
	(\ell-q)\Int \phi(|\nabla u| )|\nabla u_n|^2&\leq&(\el-q)||b^+||_{\infty}||u_n||_{\el}^{\el}+o_n(1)\nonumber\\
	&\leq& (\el-q)S_{\el}||b^+||_{\infty}||u_n||^{\el}+o_n(1).\nonumber
\end{eqnarray}

On the other hand, we observe that
$$(\ell-q)\Int \phi(|\nabla u_n| )|\nabla u_n|^2dx\geq\ell(\ell-q)\Int \Phi(|\nabla u_n|)dx\geq \ell(\ell-q)\min\{||u_n||^\ell,||u_n||^m\}. $$
Using the estimates just above we get
$$\ell(\ell-q)\min\{||u_n||^\ell,||u_n||^m\}\leq(\el-q)S_{\el}||b^+||_{\infty}||u_n||^{\el}+o_n(1).$$
Hence, we have that
$$\ell(\ell-q)\leq {(\el-q)S_{\el}||b^+||_{\infty}}||u_n||^{\el-\alpha}+\displaystyle\frac{o_n(1)}{||u_n||^\alpha}$$
where $\alpha=\ell$ whenever $||u_n||\geq 1$ and $\alpha=m$ whenever $||u_n||\leq 1$. Furthermore, using \eqref{nozero}, we obtain
\begin{equation}\label{des1aa}
||u_n||\geq \left[\Fr{\ell(\ell-q)}{(\el-q)S_{\el}||b^+||_{\infty}}\right]^{\frac{1}{\el-\alpha}}+o_n(1).
\end{equation}
Using one more time \eqref{conseqphi3} \eqref{eq5} and Holder inequality, we deduce that
\begin{eqnarray}
	(\el-m)\Int \phi(|\nabla u| )|\nabla u_n|^2&\leq&\lambda(\el-q)||a^+||_{(\frac{\ell}{q})'}||u_n||_{\ell}^{q}+o_n(1)\nonumber\\
	&\leq&\lambda(\el-q)S_{q}||a^+||_{(\frac{\ell}{q})'}||u_n||^{q}+o_n(1).\nonumber
\end{eqnarray}
Using the same ideas discussed here we also mention that
$$\ell(\el-m)\min\{||u_n||^\ell,||u_n||^m\}\leq \lambda(\el-q)S_{\ell}||a^+||_{(\frac{\ell}{q})'}||u_n||^{q}+o_n(1).$$
As a consequence we get
$$\Fr{\ell(\el-m)}{(\el-q)S_{\ell}||a^+||_{(\frac{\ell}{q})'}}||u_n||^\alpha=\Fr{\ell(\el-m)}{(\el-q)S_{\ell}||a^+||_{(\frac{\ell}{q})'}}\min\{||u_n||^\ell,||u_n||^m\}\leq \lambda||u_n||^{q}+o_n(1).$$
To sum up, using the estimate \eqref{nozero}, we can be shown that
\begin{eqnarray}
	||u_n||&\leq& \left[\lambda \Fr{(\el-q)S_{\ell}\|a^+\|_{\left(\frac{\ell}{q}\right)'}}{\ell(\el-m)}\right]^{\frac{1}{\alpha-q}}+\Fr{o_n(1)}{||u_n||^{\frac{q}{\alpha-q}}}\nonumber\\
	&=&\left[\lambda \Fr{(\el-q)S_{\ell}\|a^+\|_{\left(\frac{\ell}{q}\right)'}}{\ell(\el-m)}\right]^{\frac{1}{\alpha-q}}+o_n(1).\nonumber
\end{eqnarray}

%%%%%%%%%%%%%%%%%%%%%%%%%%%%%%%%%%%%%%%%%%%%%%%%%%%%%%%%%%%%%%%%%%%%%%%%%%%%%%%%%%%%%%%%%%%%%%%%%%%%%%%%%%%%%%%%%%%%%%%%%%%%%
%%%%%%%%%%%%%%%%%%%%%%%%%%%%%%%%%%%%%%%%%%%%%%%%%%%%%%%%%%%%%%%%%%%%%%%%%%%%%%%%%%%%%%%%%%%%%%%%%%%%%%%%%%%%%%%%%%%%%%%%%%%%%%
%Arguing as in the proof of Lemma \ref{c1} we infer that
%\begin{eqnarray}
%	||u_n|| &\geq& \left(\Fr{(\ell-q)\ell}{(\el-q)\|b\|_\infty S_{\el}}\right)^\frac{1}{\el-\alpha}+\Fr{o_n(1)}{||u_n||^{\frac{\alpha}{\el-\alpha}}}\nonumber\\
	%&=&\left(\Fr{( 		\ell-q)\ell}{(\el-q)\|b\|_\infty S_{\el}}\right)^\frac{1}{\el-\alpha}+o_n(1)\nonumber
%\end{eqnarray}
%holds true for any $\lambda\in (0,\lambda_1)$.
Arguing as in the proof of Lemma \ref{c1}, using the above inequality and \eqref{des1aa}, we have a contradiction for each $\lambda < \lambda_1$ where $\lambda _1$ was given by \eqref{Lambda1}. This finishes the proof. \hfill\cqd
\vskip.2cm

At this stage we shall prove that any minimizer sequences on the Nehari manifold in $\mathcal{N}^{+}_{\lambda}$ or $\mathcal{N}^{+}_{\lambda}$  provides us a Palais-Smale sequence. More specifically, we can prove the following result

\begin{prop}\label{3.1} Suppose $(\phi_{1})- (\phi_{3})$ and $(H)$.
Then we have the following assertions
\begin{enumerate}
\item there exists a sequence $(u_n)\subset \N$ such that
$J_\lambda(u_n)= \alpha^+_\lambda+o_n(1)
 \ \mbox{\and } J'_\lambda(u_n)=o_n(1)\ \mbox{ in}~  W^{-1,\widetilde{\Phi}}(\Omega).$
\item there exists a sequence $(u_n)\subset \N^-$ such that
$J_\lambda(u_n)=\alpha^-_\lambda+o_n(1)
 \ \mbox{\and } J'_\lambda(u_n)=o_n(1)\ \mbox{ in } W^{-1,\widetilde{\Phi}}(\Omega).$
\end{enumerate}
\end{prop}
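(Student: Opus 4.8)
The strategy is the classical Ekeland variational principle argument adapted to constrained minimization on the Nehari manifold, exactly as in Tarantello \cite{tarantello} and Brown et al \cite{Brown2,Brown1}. I will prove item (1); item (2) follows verbatim by replacing $\N^{+}$, $\alpha_\lambda^{+}$, Lemma \ref{lem1ps} and Proposition \ref{xi_n-uni} by their $\N^{-}$-counterparts ($\N^{-}$, $\alpha_\lambda^{-}$, Lemma \ref{lem2ps}, and the analogous uniform bound on $\|(\xi^-_n)'(0)\|$). The key structural facts already at our disposal are: $J_\lambda$ is coercive and bounded below on $\N$ (Proposition \ref{coercive}); $\N=\N^{+}\dot\cup\N^{-}$ is a $C^1$-manifold with $\N^0=\emptyset$ for $\lambda<\lambda_1$ (Lemma \ref{c1}); $\alpha_\lambda=\alpha_\lambda^{+}<0$ while $\alpha_\lambda^{-}=\delta_1>0$ (Lemmas \ref{nehari-} and \ref{nehari+}), so that a minimizing sequence for $J_\lambda$ over $\N$ in fact stays in $\N^{+}$ and minimizing sequences over $\N^{+}$ are genuinely bounded away from $0$ and below $1$ in norm by the Proposition just proved.

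\textbf{Key steps.} First I would apply Ekeland's variational principle to the functional $J_\lambda$ restricted to the complete metric space $\overline{\N^{+}}$ (closure in $\w$), or more safely directly to $\N$ using $\alpha_\lambda=\alpha_\lambda^{+}$: for each $n$ there is $u_n\in\N$ (which one checks lies in $\N^{+}$ since otherwise $J_\lambda(u_n)\geq\delta_1>0>\alpha_\lambda^{+}$, contradicting $J_\lambda(u_n)<\alpha_\lambda^{+}+\tfrac1n$) with
$$
J_\lambda(u_n)\leq\alpha_\lambda^{+}+\tfrac1n,\qquad J_\lambda(w)\geq J_\lambda(u_n)-\tfrac1n\|w-u_n\|\ \ \text{for all }w\in\N.
$$
So $J_\lambda(u_n)\to\alpha_\lambda^{+}$, which is the first half of the claim. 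Second, I would show $J'_\lambda(u_n)\to0$ in $W^{-1,\widetilde\Phi}(\Omega)$. Fix $u\in\w\setminus\{0\}$; take the differentiable curve $\rho\mapsto \mu_\rho=\xi_n(w_\rho)(u_n-w_\rho)$ with $w_\rho=\rho u/\|u\|$ from Lemma \ref{lem1ps}, which stays on $\N^{+}$. Plugging $w=\mu_\rho$ into the Ekeland inequality and passing to the limit $\rho\to0$ is precisely Proposition \ref{est-J'}, which gives
$$
\left\langle J'_\lambda(u_n),\tfrac{u}{\|u\|}\right\rangle\leq\frac{C}{n}\bigl(\|\xi'_n(0)\|+1\bigr).
$$
Third, I would invoke Proposition \ref{xi_n-uni}, which bounds $\|\xi'_n(0)\|\leq C$ uniformly in $n$ for $\lambda<\lambda_1$; combining, $\langle J'_\lambda(u_n),u/\|u\|\rangle\leq C/n$ for all $u\neq0$. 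Replacing $u$ by $-u$ gives the reverse inequality, so $\|J'_\lambda(u_n)\|_{W^{-1,\widetilde\Phi}}\leq C/n\to0$, as required. Since the same $u_n$ realizes both conclusions, the proof of (1) is complete, and (2) is identical with the decorations changed.

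\textbf{Main obstacle.} The genuinely delicate point is not the Ekeland argument itself but making sure the constrained derivative actually controls the free derivative with constants independent of $n$ — i.e. that the $C^1$-submanifold structure of $\N^{+}$ does not degenerate along the minimizing sequence. This is exactly what requires $\gamma''_{u_n}(1)$ to stay bounded away from $0$ and the implicit-function data $\xi_n$ to have uniformly bounded derivative; both are handled by Proposition \ref{xi_n-uni}, whose proof in turn leans on the two-sided norm bounds \eqref{12b}–\eqref{u_n-lim} and on the smallness of $\lambda$ (so that the $\N^0=\emptyset$ estimate of Lemma \ref{c1} propagates quantitatively). A secondary technical care point is the choice of ambient space for Ekeland: one should run it on $\N$ (closed in $\w$, hence a complete metric space in the induced distance) rather than on $\N^{+}$ alone, since $\N^{+}$ need not be closed; the sign gap $\alpha_\lambda^{-}>0>\alpha_\lambda^{+}$ then forces the Ekeland sequence into $\N^{+}$ automatically, which is why hypothesis $(H)$ (through Lemmas \ref{nehari-}–\ref{nehari+}) is indispensable here.
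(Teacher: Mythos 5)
Your proposal is correct and follows essentially the same route as the paper: Ekeland's variational principle produces a minimizing sequence satisfying the almost-minimality inequality, and then Proposition \ref{est-J'} combined with the uniform bound $\|\xi'_n(0)\|\leq C$ from Proposition \ref{xi_n-uni} yields $\|J'_\lambda(u_n)\|\leq C/n$. Your extra care in running Ekeland on the closed set $\N$ (rather than directly on $\N^{+}$, as the paper states) and using the sign gap $\alpha_\lambda^{-}>0>\alpha_\lambda^{+}$ to force the sequence into $\N^{+}$ is a minor but welcome refinement of the same argument.
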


\begin{proof}
Here we shall prove the item $(1)$. The proof of item $(2)$ follows the same ideas discussed here using Lemma \ref{lem2ps} instead of Lemma \ref{lem1ps}.  Applying Ekeland's variational principle there exists a sequence $(u_n)\subset\N^+$ in such  way that
\begin{description}
\item[(i)] $J_\lambda(u_n) \,= \,\alpha_\lambda^+ + o_n(1)$,
\item [(ii)] $J_\lambda(u_n)<J_\lambda(w)+\frac{1}{n}||w-u||, \,\,\forall \,\, w\,\in\N^+.$
\end{description}
According to Proposition \ref{xi_n-uni} there exists $C>0$ independent on $n \in \mathbb{N}$ in such way that $\|\xi_n(0)\|\leq C$. This estimate together with Proposition \ref{est-J'} give us the following estimate
$$\left<J'_\lambda(u_n), \Fr{u}{||u||}\right>\leq \Fr{C}{n},~u\in W^{1,\Phi}(\Omega)/\{0\}.$$
As a consequence $\|J'(u_n)\|\rightarrow0$ as $n \rightarrow \infty$. This ends the proof.
\end{proof}

\section{The concentration compactness method}

In this section we shall discuss the Concentration compactness Theorem for Orlicz-Sobolev framework. It is important to recover that compactness phenomena is a powerful property in variational methods. This property allow us to prove our main results on existence and multiplicity of solutions to quasilinear elliptic problem \eqref{eq1}.

In what follows we follow same ideas discussed in Willem \cite{Willem}. Given any function $v\in {C}^\infty_0(\Omega)$ we extend the function $v$ in the following form $v(x)=0$ for any $x\in\Omega^c$. This function is also denoted by $v$ which belongs to $v\in {C}_0^\infty(\mathbb{R}^N)$. Moreover, we observe that $\mbox{supp}(v)\subset \Omega $. It is important to mention also that
$$
\| v \|_{W^{1,\Phi}(\mathbb{R}^N)}=\| v\|_{W^{1,\Phi}(\Omega)}
$$
\nd and
$$
\|v\| = \|v\|_{_{W^{1,\Phi}(\Omega)}} \,\, \mbox{for any} \,\, v \in \w.
$$
\nd Furthermore, we observe that
$$
\w=\overline{\{v\in {C}_0^\infty(\mathbb{R}^N)~|~\mbox{supp}(v)\subset\Omega\}}^{W^{1,\Phi}(\mathbb{R}^N)}.
$$
\nd As a consequence we know that $v \in {W^{1,\Phi}(\mathbb{R}^N)}$ whenever $v\in\w$.
\par Now we shall consider the vectorial space
$$
{C}_0 =\overline{\{u\in {C}(\Omega)~|~\mbox{supp}(u) \buildrel \mbox{\scriptsize {cpt}} \over \subseteq \mathbb{R}^N\}}^{|\cdot|_\infty},
$$
\nd endowed with the norm $\displaystyle \|u\|_\infty=\sup_{x\in \mathbb{R}^N}|u(x)|$. Denote by $\mathcal{M}$ the space of finite measures $\mathbb{R}^N$ using the norm
$$
\|\mu\|_{\mathcal{M}}=\sup\left\{\int u d\mu~|~u\in {C}_0,~ |u|_\infty=1\right\}.
$$
Recall that $\mathcal{M}$ satisfies the following properties
	\begin{description}
		\item[(i)] $\mathcal{M} = {C}_0^*$  and $\langle \mu,u \rangle := \int u d\mu$,
		
		\item[(ii)] The convergence $\mu_n \buildrel \mathcal{M} \over \rightharpoonup \mu$  occurs whenever $\int u d\mu_n\stackrel{n\rightarrow\infty}{\longrightarrow} \int u d\mu,~u\in {C}_0$,
		
		\item[(iii)] Let $(\mu_n)\subseteq \mathcal{M}$ be an bounded sequence. Then, up to a subsequence, we obtain $\mu_n \buildrel \mathcal{M} \over \rightharpoonup \mu$.
	\end{description}

\nd At this moment we observe that any minimizer sequence $(u_n)\subseteq \mathcal{N}_{\lambda}$ is bounded.  Consider $\mu_n,\nu_n: {C}_0\rightarrow \mathbb{R}$ given by
$$
\langle\mu_n, v\rangle=\int_{\mathbb{R}^N}\Phi(|\nabla u_n|)vdx\qquad\mbox{and}\qquad\langle\nu_n, v\rangle=\int_{\mathbb{R}^N}|u_n|^{\ell^*}vdx,~v\in {C}_0.
$$
\nd Hence there exists a constant $C > 0$ in such way that
$$
|\langle\mu_n,v\rangle| \leq C \|v\|_\infty~\mbox{and}~  |\langle\nu_n,v\rangle|\leq C\|v\|_\infty.
$$
\nd In other words, we have been shown that $(\mu_n),(\nu_n)\subseteq \mathcal{M}$ are bounded measures. It follows from the last estimate that
\begin{equation}\label{conv_med}
\Phi(|\nabla u_n|)\rightharpoonup \mu,~~|u_n|^{\ell^*}\rightharpoonup \nu~~\mbox{in}~~\mathcal{M}.
\end{equation}
\nd In what follows we shall consider the Compactness-Concentration Theorem in the Orlicz-Sobolev framework, see Lions \cite{lions1}. For a simple demonstration on compactness-concentration theorem we refer the reader to Fukagai at. al \cite{Fuk_1}.

\begin{lem}\label{conc_comp}
	\nd  There exist an enumerable set $J$, a family $\{x_j\}_{j\in J}\subseteq \mathbb{R}^N$  such that $x_i \ne x_j$  and nonnegative real numbers $\{\nu_j\}_{j\in J}$ and $\{\mu_j \}_{j\in J}$ satisfying
	$$
	\nu=|u|^{\ell^*}+\sum_{j\in J}\nu_j\delta_{x_j}~\qquad\mbox{and}~\qquad\mu\geq \Phi(|\nabla u|)+\sum_{j\in J}\mu_j\delta_{x_j},
	$$
	\nd where $\delta_{x_j}$  is the Dirac measure with mass at $x_j$. Furthermore, we have
	$$
	\nu_j\leq\max\left\{S_{\ell^*}^{-\frac{\ell^*}{\ell}}\mu_j^{\frac{\ell^*}{\ell}},
	S_{\ell^*}^{-\frac{\ell^*}{m}}\mu_j^{\frac{\ell^*}{m}}\right\},~j\in J,
	$$
	where $S_{\ell^*}$ is the best constant for the embedding $W^{1,\Phi}_{0}(\Omega) \subset L^{\ell^{*}}(\Omega)$.
\end{lem}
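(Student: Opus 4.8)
The plan is to run Lions' concentration-compactness argument, originally carried out for $|\nabla u_n|^p$, in the Orlicz-Sobolev framework, with the single exponent $p$ replaced everywhere by the two-sided growth inequalities
$$\min\{t^\ell,t^m\}\,\Phi(s)\le\Phi(ts)\le\max\{t^\ell,t^m\}\,\Phi(s),\qquad s,t\ge0,$$
which follow at once from $\ell\le\phi(\tau)\tau^2/\Phi(\tau)\le m$ in Remark \ref{conseqphi3}. I will use repeatedly the Orlicz-Sobolev inequality, with best constant $S_{\ell^*}$,
$$S_{\ell^*}\min\{\|w\|_{\ell^*}^{\ell},\|w\|_{\ell^*}^{m}\}\le\int_\Omega\Phi(|\nabla w|),\qquad w\in\w,$$
which is a consequence of $W_0^{1,\Phi}(\Omega)\hookrightarrow L^{\ell^*}(\Omega)$ together with $\min\{\|w\|^\ell,\|w\|^m\}\le\int_\Omega\Phi(|\nabla w|)$.

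First I would reduce to a sequence converging weakly to zero. Along a subsequence $u_n\rightharpoonup u$ in $\w$; by compactness of the embedding $\w\hookrightarrow L_\Phi(\Omega)$ and the $\Delta_2$-condition, $u_n\to u$ in $L_\Phi(\Omega)$ and a.e. in $\Omega$, hence $v_n:=u_n-u\rightharpoonup0$ in $\w$ with $v_n\to0$ in $L_\Phi(\Omega)$. The Brezis-Lieb lemma applied to $t\mapsto|t|^{\ell^*}$ gives $|u_n|^{\ell^*}-|v_n|^{\ell^*}-|u|^{\ell^*}\to0$ in $L^1(\Omega)$, so that, writing $\tilde\nu$ and $\tilde\mu$ for the weak-$*$ limits (along a further subsequence) of $|v_n|^{\ell^*}dx$ and $\Phi(|\nabla v_n|)dx$, one gets $\nu=|u|^{\ell^*}dx+\tilde\nu$. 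It then suffices to show $\tilde\nu=\sum_{j\in J}\nu_j\delta_{x_j}$ with $J$ countable and $\nu_j\le\max\{S_{\ell^*}^{-\ell^*/\ell}\mu_j^{\ell^*/\ell},S_{\ell^*}^{-\ell^*/m}\mu_j^{\ell^*/m}\}$, where $\mu_j:=\tilde\mu(\{x_j\})$, and finally to reattach the gradient measure.

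The core step is to apply the Orlicz-Sobolev inequality to $w=\varphi v_n$, $\varphi\in C_0^\infty(\mathbb{R}^N)$. Writing $\nabla(\varphi v_n)=\varphi\nabla v_n+v_n\nabla\varphi$ and using convexity of $\Phi$, one has for $\delta\in(0,1)$ the pointwise bound $\Phi(|\nabla(\varphi v_n)|)\le(1-\delta)\Phi(|\varphi||\nabla v_n|/(1-\delta))+\delta\Phi(|v_n||\nabla\varphi|/\delta)$; since $v_n\to0$ in $L_\Phi$, the integral of the second term is $o_n(1)$ for fixed $\delta$, while the first is bounded, by $\Phi(ts)\le\max\{t^\ell,t^m\}\Phi(s)$, by $(1-\delta)^{1-m}\int_\Omega\max\{|\varphi|^\ell,|\varphi|^m\}\Phi(|\nabla v_n|)$, whose limit is $(1-\delta)^{1-m}\int\max\{|\varphi|^\ell,|\varphi|^m\}\,d\tilde\mu$. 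Letting $n\to\infty$ and then $\delta\to0^+$, and using $\int|\varphi v_n|^{\ell^*}\to\int|\varphi|^{\ell^*}d\tilde\nu$ on the left, I obtain the reverse Holder-type inequality
$$\min\left\{\left(\int|\varphi|^{\ell^*}d\tilde\nu\right)^{\ell/\ell^*},\left(\int|\varphi|^{\ell^*}d\tilde\nu\right)^{m/\ell^*}\right\}\le S_{\ell^*}^{-1}\int\max\{|\varphi|^\ell,|\varphi|^m\}\,d\tilde\mu.$$
Specializing $\varphi$ to cutoffs $\varphi_\varepsilon$ with $0\le\varphi_\varepsilon\le1$, $\varphi_\varepsilon\equiv1$ on $B_\varepsilon(x_0)$, $\mbox{supp}\,\varphi_\varepsilon\subset B_{2\varepsilon}(x_0)$ (so that $\max\{\varphi_\varepsilon^\ell,\varphi_\varepsilon^m\}=\varphi_\varepsilon^\ell$) and letting $\varepsilon\to0$ gives $\min\{\tilde\nu(\{x_0\})^{\ell/\ell^*},\tilde\nu(\{x_0\})^{m/\ell^*}\}\le S_{\ell^*}^{-1}\tilde\mu(\{x_0\})$, i.e. $\tilde\nu(\{x_0\})\le\max\{S_{\ell^*}^{-\ell^*/\ell}\tilde\mu(\{x_0\})^{\ell^*/\ell},S_{\ell^*}^{-\ell^*/m}\tilde\mu(\{x_0\})^{\ell^*/m}\}$ for every $x_0\in\mathbb{R}^N$. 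In particular every atom of $\tilde\nu$ is an atom of $\tilde\mu$, hence there are at most countably many, say $\{x_j\}_{j\in J}$, satisfying the stated bound; and a Besicovitch-covering argument --- using that the exponents $\ell^*/\ell$ and $\ell^*/m$ exceed $1$ (because $\ell^*>m\ge\ell$) and that $\sup_{x\in K}\tilde\mu(B_r(x))\to0$ as $r\to0$ on compacta $K$ not containing any $x_j$ --- shows that the non-atomic part of $\tilde\nu$ vanishes.

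It remains to glue the gradient measure together. For every $0\le\psi\in C_0$ the functional $w\mapsto\int_\Omega\psi\,\Phi(|\nabla w|)$ is convex and weakly lower semicontinuous on $\w$, so $\mu\ge\Phi(|\nabla u|)dx$; hence $\sigma:=\mu-\Phi(|\nabla u|)dx\ge0$ and, since $\Phi(|\nabla u|)dx$ has no atoms, $\sigma(\{x_j\})=\mu(\{x_j\})$ and $\mu\ge\Phi(|\nabla u|)dx+\sum_j\mu(\{x_j\})\delta_{x_j}$. The remaining point --- and here the absence of homogeneity is felt most --- is to compare $\mu(\{x_j\})$ with $\tilde\mu(\{x_j\})$: localizing near $x_j$ with a cutoff $\psi_\varepsilon$, and combining the reverse triangle inequality $|\nabla u_n|\ge\bigl||\nabla v_n|-|\nabla u|\bigr|$, monotonicity of $\Phi$, the bounds $\Phi((1-1/K)s)\ge(1-1/K)^m\Phi(s)$ and $\Phi(Ks)\le K^m\Phi(s)$, and $\int_{B_\varepsilon(x_j)}\Phi(|\nabla u|)\to0$ (valid since $\nabla u\in L_\Phi$), one gets $\tilde\mu(\{x_j\})\le(1-1/K)^{-m}\mu(\{x_j\})$ for every $K>1$, hence $\tilde\mu(\{x_j\})\le\mu(\{x_j\})$; taking $\mu_j:=\tilde\mu(\{x_j\})$ then yields all the assertions. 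The main obstacle throughout is precisely that each classical estimate relying on the homogeneity of $t\mapsto t^p$ must be redone with the two-sided growth of $\Phi$, which is what produces the $\max\{\cdot,\cdot\}$ in the conclusion; the compactness of $\w\hookrightarrow L_\Phi(\Omega)$ (which kills the cross terms) and the Brezis-Lieb lemma for convex integrands are the other two key ingredients.
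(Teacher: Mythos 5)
The paper does not actually prove this lemma: it is quoted as the concentration--compactness principle in the Orlicz--Sobolev setting, with the proof delegated to Lions \cite{lions1} and to Fukagai--Ito--Narukawa \cite{Fuk_1}. Your argument is a correct self-contained reconstruction of precisely that proof, and the adaptations you make are the right ones: the two-sided growth estimate $\min\{t^\ell,t^m\}\Phi(s)\le\Phi(ts)\le\max\{t^\ell,t^m\}\Phi(s)$ replacing homogeneity, the convexity splitting $\Phi(|a+b|)\le(1-\delta)\Phi(|a|/(1-\delta))+\delta\Phi(|b|/\delta)$ to kill the cross term $v_n\nabla\varphi$ via the compact embedding $\w\hookrightarrow L_\Phi(\Omega)$, the Brezis--Lieb lemma to split off $|u|^{\ell^*}$, and weak lower semicontinuity of $w\mapsto\int_\Omega\psi\,\Phi(|\nabla w|)$ to recover $\mu\ge\Phi(|\nabla u|)\,dx$. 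The resulting reverse H\"older inequality with $\min$ on the $\tilde\nu$ side and $\max$ on the $\tilde\mu$ side is exactly what produces the $\max$ in the stated bound on $\nu_j$, and your final comparison $\tilde\mu(\{x_j\})\le\mu(\{x_j\})$ via convexity and $\int_{B_\varepsilon(x_j)}\Phi(|\nabla u|)\to0$ closes the loop correctly (the exponent $(1-1/K)^{-m}$ versus $(1-1/K)^{1-m}$ is immaterial since both tend to $1$).

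Two small points deserve care. First, in the covering step the index set $J$ should be taken among the atoms of $\tilde\mu$, not of $\tilde\nu$: a compact set avoiding all atoms of $\tilde\nu$ may still contain atoms of $\tilde\mu$, so the uniform smallness $\sup_{x\in K}\tilde\mu(B_r(x))\to0$ fails as you state it. The fix is standard and cosmetic --- run the Besicovitch (or Radon--Nikodym differentiation) argument on compacta avoiding the atoms of $\tilde\mu$, conclude that $\tilde\nu$ is concentrated on those atoms, and then discard the $j$ with $\nu_j=0$ --- but it should be said. Second, the whole mechanism requires both exponents $\ell^*/\ell$ and $\ell^*/m$ to exceed $1$, i.e.\ $m<\ell^*$; this is guaranteed by hypothesis $(H)$ and is worth recording explicitly, since it is the only place where the relation between $m$ and $\ell^*$ enters the concentration analysis.
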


\begin{lem}\label{J_finito-1}
	The set $J = \{j\in J~|~\nu_j>0\}$ is finite.
\end{lem}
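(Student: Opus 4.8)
The plan is to reduce the statement to a uniform positive lower bound on the nonzero atoms $\nu_j$. By the discussion preceding Lemma \ref{conc_comp} the sequences $(\mu_n),(\nu_n)$ are bounded in $\mathcal{M}$, so by \eqref{conv_med} their weak-$*$ limits $\mu$ and $\nu$ are finite nonnegative measures; in particular $\sum_{j}\mu_j\le\mu(\mathbb{R}^N)<\infty$ and $\sum_{j}\nu_j\le\nu(\mathbb{R}^N)<\infty$. Since a convergent series of nonnegative numbers may still have infinitely many nonzero terms, the point is to produce a constant $c_0>0$, \emph{independent of} $j$, with $\nu_j\ge c_0$ whenever $\nu_j>0$; granting this, $c_0\,\#\{j:\nu_j>0\}\le\sum_j\nu_j<\infty$ gives the assertion.

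To produce $c_0$ I would localize the Palais--Smale condition at each concentration point. Let $(u_n)$ be the $(PS)$ sequence furnished by Proposition \ref{3.1}, so $J'_\lambda(u_n)\to0$ in $W^{-1,\widetilde{\Phi}}(\Omega)$, $u_n\rightharpoonup u$ in $\w$ and $u_n\to u$ in $L^q(\Omega)$ and a.e. Fix $j\in J$ with $\nu_j>0$; then necessarily $x_j\in\overline{\Omega}$, for otherwise $\nu_j=0$ since $u_n$ vanishes outside $\Omega$. For $\epsilon>0$ choose $\psi_\epsilon\in C_0^\infty(\mathbb{R}^N)$ with $0\le\psi_\epsilon\le1$, $\psi_\epsilon\equiv1$ on $B_{\epsilon/2}(x_j)$, $\mbox{supp}\,\psi_\epsilon\subset B_\epsilon(x_j)$ and $|\nabla\psi_\epsilon|\le C\epsilon^{-1}$. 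As $(\psi_\epsilon u_n)_n$ is bounded in $\w$, one has $\langle J'_\lambda(u_n),\psi_\epsilon u_n\rangle=o_n(1)$; expanding \eqref{functional} and isolating $\Int\phi(|\nabla u_n|)|\nabla u_n|^2\psi_\epsilon$ leaves, besides $o_n(1)$, the mixed gradient term $-\Int\phi(|\nabla u_n|)\nabla u_n\cdot\nabla\psi_\epsilon\,u_n$, the concave term $\lambda\Int a(x)|u_n|^q\psi_\epsilon$ and the critical term $\Int b(x)|u_n|^{\el}\psi_\epsilon\le\|b^+\|_\infty\Int|u_n|^{\el}\psi_\epsilon$. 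The mixed term is dominated, by the H\"older inequality in Orlicz spaces, by $2\,\|\phi(|\nabla u_n|)|\nabla u_n|\|_{\widetilde{\Phi}}\,\|u_n\nabla\psi_\epsilon\|_{\Phi}$; using $\widetilde{\Phi}(t\phi(t))\le2^m\Phi(t)$, the boundedness of $(u_n)$ and the fact that $u$ carries no mass at $x_j$ (so $\int_{B_\epsilon(x_j)}|u|^{\el}\,dx\to0$ as $\epsilon\to0$), together with a Brezis--Lieb decomposition of $\Phi(|\nabla u_n|)$, this term tends to $0$ as $\epsilon\to0$ uniformly in $n$. The concave term converges, as $n\to\infty$, to $\lambda\Int a(x)|u|^q\psi_\epsilon$ by the compact embedding $\w\hookrightarrow L^q(\Omega)$, which in turn tends to $0$ as $\epsilon\to0$. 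For the left-hand side we use $\ell\Phi(t)\le\phi(t)t^2$ (Remark \ref{conseqphi3}) and $\Phi(|\nabla u_n|)\rightharpoonup\mu$ to get $\liminf_n\Int\phi(|\nabla u_n|)|\nabla u_n|^2\psi_\epsilon\ge\ell\int\psi_\epsilon\,d\mu\ge\ell\,\mu_j$, and $|u_n|^{\el}\rightharpoonup\nu$ for the critical term. Letting $n\to\infty$ and then $\epsilon\to0$ one arrives at $\ell\,\mu_j\le\|b^+\|_\infty\,\nu_j$, that is $\mu_j\le C\nu_j$ with $C=\|b^+\|_\infty/\ell$.

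Finally, substituting $\mu_j\le C\nu_j$ into the inequality of Lemma \ref{conc_comp} yields $\nu_j\le\max\{S_{\ell^*}^{-\ell^*/\ell}(C\nu_j)^{\ell^*/\ell},\,S_{\ell^*}^{-\ell^*/m}(C\nu_j)^{\ell^*/m}\}$. Dividing by $\nu_j>0$, at least one of the two factors on the right is $\ge1$; since $\ell^*/\ell\ge\ell^*/m>1$ (because $\ell\le m<\ell^*$ by $(H)$), each alternative forces $\nu_j$ to be bounded below by a fixed positive quantity, so $\nu_j\ge c_0:=\min\{(S_{\ell^*}^{\ell^*/\ell}C^{-\ell^*/\ell})^{\ell/(\ell^*-\ell)},\,(S_{\ell^*}^{\ell^*/m}C^{-\ell^*/m})^{m/(\ell^*-m)}\}>0$, with $c_0$ independent of $j$. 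Together with $\sum_j\nu_j<\infty$, this shows $\{j:\nu_j>0\}$ is finite. The main obstacle is exactly the localization step with the non-homogeneous operator $\Delta_\Phi$: controlling the mixed gradient term as $\epsilon\to0$ uniformly in $n$ requires combining the Orlicz H\"older inequality with a Brezis--Lieb splitting of $\Phi(|\nabla u_n|)$ to bound $\|u_n\nabla\psi_\epsilon\|$ independently of $n$, and the critical term must be handled via $\|b^+\|_\infty\int|u_n|^{\el}\psi_\epsilon$ rather than by evaluating $b$ pointwise at $x_j$, as $b$ is merely of class $L^\infty$.
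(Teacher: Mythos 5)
Your proposal is correct and follows essentially the same route as the paper: localize with cutoffs $\psi_\epsilon$ at each concentration point, test the Palais--Smale condition with $\psi_\epsilon u_n$ to derive $\mu_j\leq C\,\nu_j$, combine this with the reverse inequality of Lemma \ref{conc_comp} to obtain a uniform positive lower bound on the nonzero atoms $\nu_j$, and conclude from the finiteness of the measure $\nu$. The only (harmless) deviation is in the mixed gradient term: the paper passes to the limit in $n$ first, identifying the weak limit $w$ of $\phi(|\nabla u_n|)\nabla u_n$ and then using the limit equation with test function $u\psi_\epsilon$ plus dominated convergence to kill $\int_\Omega u\,w\cdot\nabla\psi_\epsilon\,dx$ as $\epsilon\to0$, whereas you assert smallness of the mixed term uniformly in $n$ --- an overstatement you do not actually need, since you take the iterated limit $n\to\infty$ then $\epsilon\to0$ anyway.
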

\dem First of all, we mention that $\{x_j\}_{j\in \widetilde{J}}\subseteq \overline{\Omega}$. Indeed, arguing by contradiction we suppose that $x_j\in\overline{\Omega}^c$ for some $j\in J$. Hence there exists $\epsilon > 0$ such that ${\overline{B}_\epsilon}(x_j)\subseteq\overline{\Omega}^c$. Consider $\varphi_\epsilon\in {C}_0^\infty(\mathbb{R}^N)$ satisfying the following conditions
$$
supp(\varphi_\epsilon)\subset B_\epsilon(x_j),~~~
\varphi_\epsilon\stackrel{\epsilon\rightarrow 0}\longrightarrow \chi_{\{x_j\}}~ \mbox{a.e.}~ {\bf R}^{N}.
$$
\nd At this moment, we extend the function $u_n$ in $\mathbb{R}^N$ putting $u_n(x)=0$ for any $x\in\mathbb{R}^N-\Omega$. Let $\epsilon > 0$ be fixed. Using \eqref{conv_med}, we mention that
$$
0=\int_{\mathbb{R}^N}\Phi(|\nabla u_n|)\varphi_\epsilon dx\stackrel{n} \longrightarrow \int_{\mathbb{R}^N}\varphi_\epsilon d\mu.
$$
\nd Taking the limit as $\epsilon \to 0$ we deduce that
$$
0=\int_{\mathbb{R}^N}\varphi_\epsilon d\mu=\int_{B_\epsilon(x_j)}\varphi_\epsilon d\mu  \rightarrow\int_{\{x_j\}}d\mu=\mu_j.
$$
\nd As a consequence $\mu_j = 0$. According to Lemma \ref{conc_comp} we infer that $\nu_j=0$. This is a contradiction due the fact that $j\in J$. Hence
we obtain that $\{x_j\}_{j\in \widetilde{J}}\subseteq \overline{\Omega}$.

\par Consider $\psi\in C_0^\infty$ such that $0 \leq \psi \leq 1$, $\psi(x) = 1~ \mbox{if}~ |x| \leq 1$ and $\psi(x) = 0~\mbox{if}~|x| \geq 2$. Define $x_j$ with $j\in J$, $\epsilon > 0$ and
$$
\psi_\epsilon(x):=\psi\left(\frac{x-x_j}{\epsilon}\right),~~x\in\mathbb{R}^N.
$$
\nd Now we point out that
\begin{equation}\label{MM}
\int_{\Omega}\phi(|\nabla u_n|)\nabla u_n\nabla v -\lambda a(x)|u_n|^{q-2}u_nv-b(x)|u_n|^{\ell^*-2}u_nv=o_n(1),~v\in\w.
\end{equation}

\nd Using the fact that $(\psi_\epsilon u_n)\subseteq\w$ is bounded it follows from $(\ref{MM})$ that
\begin{equation}\label{d1}
\begin{array}{lll}
\displaystyle\int_{\Omega}\phi(|\nabla u_n|)\nabla u_n\nabla (\psi_\epsilon u_n) & = & \lambda\displaystyle\int_{\Omega}a(x)|u_n|^q\psi_\epsilon dx+\int_{\Omega}b(x)|u_n|^{\ell^*}\psi_\epsilon dx+o_n(1).
\end{array}
\end{equation}
\nd On the other hand, using the estimate $t^2\phi(t)\geq\Phi(t)$ we observe that \\
$\displaystyle\int_{\Omega}\phi(|\nabla u_n|)\nabla u_n\nabla (\psi_\epsilon u_n)dx= $
\begin{equation}\label{d2}
\begin{array}{lll}
& = & \displaystyle\int_{\Omega}u_n\phi(|\nabla u_n|)\nabla u_n\nabla \psi_\epsilon dx+\int_{\Omega}\psi_\epsilon\phi(|\nabla u_n|)|\nabla u_n|^2dx \\[2ex]
& \geq & \displaystyle\int_{\Omega}u_n\phi(|\nabla u_n|)\nabla u_n\nabla \psi_\epsilon dx+\int_{\Omega}\psi_\epsilon\Phi(|\nabla u_n|)dx.
\end{array}
\end{equation}
\par Now we claim that
\[
(\phi(|\nabla u_n|) |\nabla u_n|)~ \mbox{is bounded in}~ L_{\widetilde{\Phi}}(\Omega).
\]
In fact, using that $\widetilde \Phi(\phi(t)t)\leq \Phi(2t),~t\in \mathbb{R}$, we infer that
$$
\int_\Omega \widetilde \Phi(\phi(|\nabla u_n|)|\nabla u_n|)dx\leq\int_\Omega\Phi(2|\nabla u_n|)dx\leq 2^m\int_\Omega\Phi(|\nabla u_n|)dx<\infty.
$$
\nd This proves the claim. Hence, we have $(\phi(|\nabla u_n|){\partial u_n}/{\partial x_i})$ is also bounded in $L_{\widetilde{\Phi}}(\Omega)$. So that
\begin{equation}\label{e4}
\displaystyle \phi(|\nabla u_n|)\frac{\partial u_n}{\partial x_i}\rightharpoonup w_i~\mbox{in}~L_{\widetilde{\Phi}}(\Omega),~i=1,...,N.
\end{equation}
\nd  Now define $w =(w_1,...,w_N)$. It is no hard to see that
\begin{equation}\label{e6}
\int_\Omega (u_n\phi(|\nabla u_n|)\nabla u_n\nabla\psi_\epsilon-u~ w.\nabla \psi_\epsilon)dx=o_n(1).
\end{equation}
\nd Indeed, using H\"older's inequality and $(\ref{e4})$ in the testing function $\frac{\partial \psi_\epsilon}{\partial x_i}u$, we know that
$$
\begin{array}{cl}
\displaystyle  \Big |\int_\Omega  \phi(|\nabla u_n|)\frac{\partial u_n}{\partial x_i}\frac{\partial \psi_\epsilon}{\partial x_i}u_n-  w_i\frac{\partial \psi_\epsilon}{\partial x_i}u  dx \Big |~  \leq\\
\\
\displaystyle\int_\Omega \Big | \phi(|\nabla u_n|)\frac{\partial u_n}{\partial x_i}\frac{\partial \psi_\epsilon}{\partial x_i}\big(u_n- u \big) \Big | dx+
\Big |  \int_\Omega \phi(|\nabla u_n|)\frac{\partial u_n}{\partial x_i}\frac{\partial \psi_\epsilon}{\partial x_i}u - w_i\frac{\partial \psi_\epsilon}{\partial x_i}u  dx \Big| ~ \leq \\
\\
\displaystyle 2\left\|\phi(|\nabla u_n|)\frac{\partial u_n}{\partial x_i}\frac{\partial \psi_\epsilon}{\partial x_i}\right\|_{\widetilde{\Phi}}\left\|u_n-u\right\|_\Phi+ o_n(1).
\end{array}
$$
Here we have used that $u_n \rightharpoonup u$ in $\w$. Moreover, we mention that $\w\stackrel{comp}\hookrightarrow L_\Phi(\Omega)$ implies that $\|u_n-u\|_\Phi \rightarrow 0$. As a consequence we see that
\[
\int_\Omega \phi(|\nabla u_n|)\frac{\partial u_n}{\partial x_i}\frac{\partial \psi_\epsilon}{\partial x_i}u_ndx\stackrel{n}\longrightarrow\int_\Omega w_i\frac{\partial \psi_\epsilon}{\partial x_i}u dx,~~i=1,...,N,
\]
\nd This proves the assertion $(\ref{e6})$ proving the claim. Using $(\ref{e6})$ and $(\ref{d2})$ we get
\begin{equation}\label{d3}
\begin{array}{lll}
\displaystyle\int_{\Omega}\psi_\epsilon\Phi(|\nabla u_n|)dx
+\int_\Omega u w.\nabla \psi_\epsilon dx  & \leq &
\displaystyle\int_{\Omega}\phi(|\nabla u_n|)\nabla u_n\nabla (\psi_\epsilon u_n)+o_n(1).
\end{array}
\end{equation}
\nd It follows from $(\ref{d1})$ and $(\ref{d3})$ that
\[
\displaystyle\int_{\Omega}\psi_\epsilon\Phi(|\nabla u_n|)dx+\int_\Omega u w.\nabla \psi_\epsilon dx\leq \lambda\displaystyle\int_{\Omega}a(x)|u_n|^q\psi_\epsilon dx+\int_{\Omega}b(x)|u_n|^{\ell^*}\psi_\epsilon dx+o_n(1).
\]
\nd Taking the limit in the last assertion just above limit and using the fact that
$$
\int_{\Omega}\Phi(|\nabla u_n|)\psi_\epsilon dx\stackrel{n\rightarrow\infty}\longrightarrow \int_{\Omega}\psi_\epsilon d\mu,~~~ \int_{\Omega} b(x)|u_n|^{\ell^*}\psi_\epsilon dx\stackrel{n\rightarrow\infty}\longrightarrow \int_{\Omega}b(x)\psi_\epsilon d\nu,
$$
\nd and
$$
\int_{\Omega}a(x)|u_n|^q\psi_\epsilon dx\stackrel{n\rightarrow\infty}\longrightarrow\int_{\Omega}a(x)|u|^q \psi_\epsilon dx,
$$
\nd we deduce that
\begin{equation}\label{d5}
\displaystyle\int_{\Omega}\psi_\epsilon d\mu+\int_\Omega u w.\nabla \psi_\epsilon dx\leq \lambda\int_{\Omega}a(x)|u|^q\psi_\epsilon dx+\|b\|_\infty\int_{\Omega} \psi_\epsilon d\nu.
\end{equation}
\nd Let $v\in\w$ be fixed. Taking the limit in \eqref{MM} and using $(\ref{e4})$ we infer that
\begin{equation}\label{d6}
\int_\Omega\left(w.\nabla v-\lambda a(x)|u|^{q-2}uv -b(x)|u|^{\ell^*-2}u v\right)dx=0.
\end{equation}
\nd Putting $v=u \psi_\epsilon$ and $(\ref{d6})$ we obtain
\[
\int_\Omega u  w.\nabla \psi_\epsilon dx=\int_\Omega \left(\lambda a(x)|u|^q+b(x)|u|^{\ell^*}- w.\nabla u\right)\psi_\epsilon dx.
\]
\nd However, we observe that
$$
|\lambda a(x)|u|^q+b(x)|u|^{\ell^*}- w.\nabla u|\leq \lambda |a(x)||u|^q+|b(x)||u|^{\ell^*}+ |w||\nabla u|\in L^1(\Omega)
$$
\nd and
$$
\left(\lambda a(x)|u|^q+b(x)|u|^{\ell^*}- w.\nabla u\right)\psi_\epsilon\stackrel{\epsilon\rightarrow 0}\longrightarrow 0~\mbox{a. e. in}~\Omega
$$
\nd Using the Lebesgue Convergence Theorem we see that
$$
\int_\Omega  u w.\nabla \psi_\epsilon dx\stackrel{\epsilon\rightarrow 0}\longrightarrow 0\qquad\mbox{and}\qquad \int_\Omega b(x)|u|^{\ell^*} \psi_\epsilon dx\stackrel{\epsilon\rightarrow 0}\longrightarrow 0.
$$
\nd Note that
$$
\psi_\epsilon \stackrel{\epsilon\rightarrow 0}\longrightarrow \chi_{\{x_j\}}~\mbox{a.e. in}~ \mathbb{R}^N~\mbox{and}~ \psi_\epsilon(x)\leq \chi_{B_1(x_j)}(x)~\mbox{for any}~x \in \mathbb{R}^{N}
$$
for $\epsilon>0$ is small enough. In this case, we conclude that
$$
\int_{\mathbb{R}^N} \psi_\epsilon d \mu\stackrel{\epsilon\rightarrow 0}\longrightarrow \int_{\{x_j\}}d\mu=\mu(\{x_j\})=\mu_j~\mbox{and}~ \int_{\mathbb{R}^N} \psi_\epsilon d \nu\stackrel{\epsilon\rightarrow 0}\longrightarrow \int_{\{x_j\}}d\nu=\nu(\{x_j\})=\nu_j.
$$
\nd Taking the limit in $(\ref{d5})$ as $\epsilon\rightarrow 0_+$ we deduce that
\begin{equation}\label{d8}
\mu_j\leq \|b\|_\infty\nu_j,~~j\in J.
\end{equation}
\nd As a consequence Lemma \ref{conc_comp} we have $\mu_j\leq S^{\alpha}\mu_j^\alpha$ where $1<\alpha\leq\min \big\{{\ell^*}/{\ell},{\ell^*}/{m} \big \}$.
\nd Therefore, we obtain that $$\mu_j \geq \left(\frac{1}{ S_{\ell^*}^{\alpha}}\right)^{\frac{1}{\alpha-1}}.$$
To sum up, using $(\ref{d8})$ we easily see that
$$\nu_j\geq \frac{1}{\|b\|_\infty}\left(\frac{1}{ S_{\ell^*}^{\alpha}}\right)^{\frac{1}{\alpha-1}},~\mbox{for any}~j\in J.$$
At this stage, assuming that $\#( J) = \infty$ we obtain
$$
\sum_{j\in\widetilde{J}}\nu_j\geq\sum_{j\in\widetilde{J}}c_3=\infty.
$$
\nd This is impossible because of $\nu$ is a finite measure and
$$
\nu=|u|^{\ell^*}+\sum_{j\in \widetilde{J}}\nu_j\delta_{x_j}.
$$
\nd This finishes the proof of Lemma \ref{J_finito-1}. \hfill\cqd%$\hfill{\rule{2mm}{2mm}}$

For the next result we extend the function $u_n$ to ${\R}^{N}$ defining $u_n = 0~\mbox{in}~ {\R}^{N} \backslash \Omega$.

\begin{lem}\label{conv_compactos-1}
	Then there exist $r\in\mathbb{N}$ and $x_1, \cdots, x_r \in {\R}^{N}$ in such way that
	\begin{equation}\label{conv-K-1}
	u_n\stackrel{L^{\ell^*}(K)}\longrightarrow u
	\end{equation}
	for each set $K\subset \R^N \backslash \{x_1, \cdots , x_r \}$.
\end{lem}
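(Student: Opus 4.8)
The plan is to localize the concentration--compactness decomposition of Lemma~\ref{conc_comp} away from the finitely many concentration points $x_1,\dots,x_r$ (finiteness being the content of Lemma~\ref{J_finito-1}) and then to turn the weak-$*$ convergence $|u_n|^{\el}\rightharpoonup\nu$ into strong $L^{\el}$ convergence on compact sets by pairing it with the a.e. convergence of $(u_n)$. Here $\mu,\nu$ and $\{x_j\}_{j=1}^{r}$ are the objects already attached to the bounded minimizer sequence $(u_n)$; recall that $u_n\rightharpoonup u$ in $\w$, so $u\in L^{\el}(\Omega)$ since $\w\hookrightarrow L^{\el}(\Omega)$, and, along a subsequence, $u_n\to u$ in $L_\Phi(\Omega)\hookrightarrow L^\ell(\Omega)$ and a.e. in $\Rn$ (recall $u_n$ is extended by $0$ outside $\Omega$).

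First I would fix a compact set $K\subset\Rn\setminus\{x_1,\dots,x_r\}$ and choose $\delta>0$ so small that the closed $2\delta$-neighborhood $\overline{K_{2\delta}}$ of $K$ still misses $\{x_1,\dots,x_r\}$. Pick $\psi\in C_0^\infty(\Rn)$ with $0\le\psi\le 1$, $\psi\equiv 1$ on $K_{\delta}$ and $\mathrm{supp}\,\psi\subset K_{2\delta}$; in particular $\psi\in C_0$ and $\psi(x_j)=0$ for $j=1,\dots,r$. Since
\[
\nu=|u|^{\el}+\sum_{j=1}^{r}\nu_j\,\delta_{x_j}
\]
and $\psi$ vanishes at each atom $x_j$, we get $\int\psi\,d\nu=\int_\Omega\psi|u|^{\el}\,dx$. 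Applying the weak-$*$ convergence $|u_n|^{\el}\rightharpoonup\nu$ in $\M$ to $\psi\in C_0$ and using that $u_n\equiv 0$ outside $\Omega$ yields
\[
\int_\Omega\psi|u_n|^{\el}\,dx\ \longrightarrow\ \int\psi\,d\nu=\int_\Omega\psi|u|^{\el}\,dx .
\]

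Next I would put $g_n:=\psi^{1/\el}u_n$ and $g:=\psi^{1/\el}u$, noting $\psi^{1/\el}\in C_0$ and $\psi^{1/\el}\equiv 1$ on $K$. Then $g_n\to g$ a.e. in $\Rn$, $(g_n)$ is bounded in $L^{\el}(\Omega)$, and the last display says $\|g_n\|_{L^{\el}}^{\el}=\int_\Omega\psi|u_n|^{\el}\,dx\to\int_\Omega\psi|u|^{\el}\,dx=\|g\|_{L^{\el}}^{\el}$. By the Brezis--Lieb lemma, $\|g_n\|_{L^{\el}}^{\el}-\|g_n-g\|_{L^{\el}}^{\el}\to\|g\|_{L^{\el}}^{\el}$, hence $\|g_n-g\|_{L^{\el}}^{\el}\to 0$. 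Since $\psi^{1/\el}\equiv 1$ on $K$,
\[
\int_K|u_n-u|^{\el}\,dx\ \le\ \int_\Omega\psi\,|u_n-u|^{\el}\,dx=\|g_n-g\|_{L^{\el}}^{\el}\ \longrightarrow\ 0,
\]
which is precisely \eqref{conv-K-1}. As the weak limit $u$ is independent of the subsequence, the standard subsequence argument promotes this to convergence of the whole sequence.

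The hard part is the step $\int_\Omega\psi\,|u_n-u|^{\el}\,dx\to 0$: away from the $x_j$'s the critical densities $|u_n|^{\el}$ must not carry extra mass, which is exactly the absence of atoms of $\nu$ on $\mathrm{supp}\,\psi$, and converting the resulting convergence of $L^{\el}$-norms into strong convergence relies on the a.e. convergence together with the Brezis--Lieb identity (equivalently, the uniform convexity of $L^{\el}$). One must also keep $K$ compact and at positive distance from the $x_j$'s, so that no mass is lost at those points or at infinity; boundedness of $\Omega$ makes the latter automatic.
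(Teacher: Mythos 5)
Your proof is correct. It follows the same localization strategy as the paper --- both arguments hinge on Lemma \ref{J_finito-1} and on testing the limit measure against a cutoff that vanishes near the finitely many atoms $x_1,\dots,x_r$ --- but the two proofs differ in where the Brezis--Lieb identity does its work. The paper applies the concentration--compactness decomposition directly to the shifted sequence, asserting $|u_n-u|^{\el}\rightharpoonup\sum_j\nu_j\delta_{x_j}$ (so the absolutely continuous part disappears and pairing with the cutoff $\chi$ immediately gives $\int_{K_\delta}|u_n-u|^{\el}\to 0$); that assertion is stated without justification, and it is itself a Brezis--Lieb consequence. You instead keep the decomposition $\nu=|u|^{\el}+\sum_j\nu_j\delta_{x_j}$ for the original sequence, deduce convergence of the weighted norms $\int\psi|u_n|^{\el}\to\int\psi|u|^{\el}$, and then invoke Brezis--Lieb explicitly on $g_n=\psi^{1/\el}u_n$ together with a.e.\ convergence to upgrade norm convergence to strong convergence. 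Your route is slightly longer but fills in the step the paper leaves implicit, so it is arguably more complete. Two small remarks: the conclusion genuinely requires $K$ to be compact (or at least at positive distance from the atoms), as you note and as the paper's own proof also assumes despite the looser wording of the statement; and your closing remark about passing from a subsequence to the whole sequence should be treated with care, since the atoms $x_j$ and the measures $\mu,\nu$ are attached to the particular subsequence --- the lemma is really a statement about that subsequence, which is all that is needed later.
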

\dem Initially, we observe that ${J}$ is finite. Hence there exists $\delta>0$ such that $B_{\delta}(x_j)\cap B_{\delta}(x_j)=\emptyset$ for any $i\neq j$ with $i, j \in {J}$. Now we consider the set $\displaystyle K_{\delta}\subset \R^N \backslash \cup_{j\in{J}} B_\delta(x_j)$ and
$\chi\in {C}_0^\infty$ in such way that
$$
0\leq\chi\leq 1,~~\chi=1~\mbox{in}~K_{\delta},~~\mbox{supp}(\chi)\cap\left(\cup_{j\in\widetilde{J}} B_{\frac{\delta}{2}}(x_j)\right)=\emptyset.
$$
\nd Notice also that
$$
|u_n-u|^{\ell^*}\rightharpoonup\nu~~\mbox{and}~~ \nu = \sum_{j\in\widetilde{J}}\nu_j\delta_{x_j}~\mbox{in}~ \mathcal{M}.
$$
\nd On the other hand, we observe that
$$
\displaystyle 0\leq\int_{K_{\delta}} |u_n-u|^{\ell^*}dx  \leq  \displaystyle\int_{\R^N}|u_n-u|^{\ell^*}\chi dx,
$$
$$
\displaystyle \int_{\R^N}|u_n-u|^{\ell^*}\chi dx \rightarrow \int_{\R^N}\chi d\nu,
$$
$$
\int_{\R^N}\chi d\nu  =
\displaystyle\sum_{j\in\widetilde{J}} \chi(x_j) = 0.
$$
\nd As a consequence we mention that
$$
\int_{K_{\delta}}|u_n-u|^{\ell^*}dx \rightarrow 0.
$$
\nd Putting the all estimates together and using the fact that $\delta > 0$ is arbitrary we conclude that \eqref{conv-K-1} holds true for each
compact set
$K\subset\R^N \backslash \{x_j\}_{j\in{J}}$. This ends the proof. \hfill\cqd%$\hfill{\rule{2mm}{2mm}}$

\begin{lem}\label{conv_grad_qtp}
	$(i)$ $\phi(|\nabla u_n|)\nabla u_n\rightharpoonup \phi(|\nabla u|)\nabla u$ em $\prod L_{\widetilde{\Phi}}(\Omega)$;\\
	$(ii)$ $|u_n|^{\ell^*-2}u_n \rightharpoonup|u|^{\ell^*-2}u$ em $L^{\frac{\ell^*}{\ell^*-1}}(\Omega)$.
\end{lem}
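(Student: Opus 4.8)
The overall plan is to deduce both weak convergences from the classical fact that a bounded sequence in a reflexive space which converges almost everywhere converges weakly to its pointwise limit; passing to subsequences is harmless since in each case the weak limit is uniquely determined, hence the full sequence converges. First, since $(u_n)$ is bounded in $\w$, the compact embedding $\w\hookrightarrow L^\ell(\Omega)$ gives $u_n\to u$ in $L^\ell(\Omega)$ and hence, along a subsequence, $u_n\to u$ a.e.\ in $\Omega$. For $(ii)$ I would then note that $\big\||u_n|^{\ell^*-2}u_n\big\|_{\ell^*/(\ell^*-1)}=\|u_n\|_{\ell^*}^{\ell^*-1}$ is bounded because $\w\hookrightarrow L^{\ell^*}(\Omega)$, so $(|u_n|^{\ell^*-2}u_n)$ is bounded in the reflexive space $L^{\ell^*/(\ell^*-1)}(\Omega)$, while $|u_n|^{\ell^*-2}u_n\to|u|^{\ell^*-2}u$ a.e.; the quoted principle yields $(ii)$ at once.

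For $(i)$ I would start from \eqref{e4}, already established in the proof of Lemma \ref{J_finito-1}, namely $\phi(|\nabla u_n|)\,\partial u_n/\partial x_i\rightharpoonup w_i$ in $L_{\widetilde\Phi}(\Omega)$; it remains only to identify $w_i=\phi(|\nabla u|)\,\partial u/\partial x_i$, and for that it suffices to prove $\nabla u_n\to\nabla u$ a.e.\ in $\Omega$. By Lemma \ref{J_finito-1} the concentration set $\{x_1,\dots,x_r\}$ is finite, hence of Lebesgue measure zero, so I localize: fix a compact $K\subset\R^N\setminus\{x_1,\dots,x_r\}$ and $\psi\in C_0^\infty(\R^N)$ with $0\le\psi\le1$, $\psi\equiv1$ on $K$ and $\mbox{supp}\,\psi\cap\{x_1,\dots,x_r\}=\emptyset$. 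Testing the approximate equation \eqref{MM} with $v=\psi(u_n-u)\in\w$, and using that $\phi(|\nabla u_n|)\nabla u_n$ is bounded in $\prod L_{\widetilde\Phi}(\Omega)$, that $(u_n-u)\nabla\psi\to 0$ in $L_\Phi(\Omega)$ by the compact embedding, and that $u_n\to u$ in $L^{\ell^*}$ on $\mbox{supp}\,\psi$ by Lemma \ref{conv_compactos-1} (with $a,b\in L^\infty(\Omega)$ controlling the concave and critical terms), I expect to reach
$$\int_\Omega\psi\,\phi(|\nabla u_n|)\nabla u_n\cdot(\nabla u_n-\nabla u)\,dx=o_n(1);$$
subtracting $\int_\Omega\psi\,\phi(|\nabla u|)\nabla u\cdot(\nabla u_n-\nabla u)\,dx$, which tends to $0$ because $\partial u_n/\partial x_i\rightharpoonup\partial u/\partial x_i$ in $L_\Phi(\Omega)$ while $\psi\,\phi(|\nabla u|)\,\partial u/\partial x_i\in L_{\widetilde\Phi}(\Omega)$, I arrive at
$$\int_\Omega\psi\,\big(\phi(|\nabla u_n|)\nabla u_n-\phi(|\nabla u|)\nabla u\big)\cdot(\nabla u_n-\nabla u)\,dx=o_n(1).$$

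By $(\phi_2)$ the vector field $\xi\mapsto\phi(|\xi|)\xi$ is strictly monotone, so the last integrand is nonnegative, and since $\psi\equiv1$ on $K$ these nonnegative functions converge to $0$ in $L^1(K)$, hence a.e.\ on $K$ along a further subsequence; strict monotonicity together with $\lim_{s\to\infty}s\phi(s)=\infty$ then forces $\nabla u_n\to\nabla u$ a.e.\ on $K$, and letting $K$ exhaust $\R^N\setminus\{x_1,\dots,x_r\}$ (with a diagonal argument) gives $\nabla u_n\to\nabla u$ a.e.\ in $\Omega$. By continuity of $\phi$, $\phi(|\nabla u_n|)\,\partial u_n/\partial x_i\to\phi(|\nabla u|)\,\partial u/\partial x_i$ a.e., and since this sequence is bounded in the reflexive Orlicz space $L_{\widetilde\Phi}(\Omega)$ (here one uses $\widetilde\Phi\in\Delta_2$), the same convergence principle as in $(ii)$ identifies its weak limit, so $w_i=\phi(|\nabla u|)\,\partial u/\partial x_i$ and $(i)$ follows. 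The step I expect to be the main obstacle is precisely this localized monotonicity argument: it is where the concentration--compactness analysis (Lemma \ref{conv_compactos-1}, which rests on Lemmas \ref{conc_comp} and \ref{J_finito-1}) is genuinely needed, in order to neutralize the critical term $b(x)|u_n|^{\ell^*-2}u_n\psi(u_n-u)$ away from the concentration points, and where one must handle the products through the $L_\Phi$--$L_{\widetilde\Phi}$ duality rather than a single Lebesgue exponent.
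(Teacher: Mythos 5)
Your proposal is correct and follows essentially the same route as the paper: localize away from the finite concentration set, test the approximate equation with $\chi(u_n-u)$, use the $L_\Phi$--$L_{\widetilde\Phi}$ duality, the compact embedding, and Lemma \ref{conv_compactos-1} for the critical term to make the monotonicity quantity vanish in $L^1(K)$, deduce $\nabla u_n\to\nabla u$ a.e., and then identify the weak limits from a.e.\ convergence plus boundedness. The only cosmetic difference is that where you argue the pointwise implication ``monotonicity quantity $\to 0$ a.e.\ $\Rightarrow$ $\nabla u_n\to\nabla u$ a.e.'' directly via strict monotonicity and $s\phi(s)\to\infty$, the paper delegates this to the Dal Maso--Murat lemma, and it invokes Gossez's lemma for the final ``a.e.\ convergence plus boundedness implies weak convergence in $L_{\widetilde\Phi}$'' step.
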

\dem Firstly, we shall prove the item $(i)$. Consider $\{K_\nu\}_{\nu =1}^{\infty}$ a family of compact sets satisfying
\begin{eqnarray}\label{rn-K_nu}
\Omega\backslash\{x_j\}_{j\in J}=\bigcup_{\nu=1}^\infty K_\nu.
\end{eqnarray}
Choose any integer number $\nu \geq 1$. Let $\chi\in {C}_0^\infty(\R^N)$ be a function such that
$0\leq\chi\leq 1$,~ $\chi=1~\mbox{in}~K_\nu$ and supp($\chi) \cap\{x_j\}_{j\in{J}} = \emptyset$. Using the fact that $\Phi$ is convex we obtain
$$A_n:=(\phi(|\nabla u_n|)\nabla u_n-\phi(|\nabla u|)\nabla u,\nabla u_n-\nabla u)\geq 0,~\mbox{in}~\R^N.$$
As a consequence we employ that
\begin{eqnarray}\label{monot}
0\leq \int_{K_\nu}A_n(x)dx & \leq & \int_{\R^n}(\phi(|\nabla u_n|)\nabla u_n,\nabla u_n-\nabla u)\chi dx\nonumber\\
&-&\int_{\R^n}(\phi(|\nabla u|)\nabla u,\nabla u_n-\nabla u)\chi dx.\nonumber
\end{eqnarray}

Define $v_n =\chi (u_n-u)$. It follows easily that $v_n$ is bonded in $W^{1,\Phi}(\R^N)$. Using $v_n$ as testing function in \eqref{MM} we deduce that
\begin{equation}\label{sol_eq}
\int_{\R^N}\phi(|\nabla u_n|)\nabla u_n\nabla v_ndx -\lambda\displaystyle\int_{\R^N}a(x)|u_n|^{q-2}u_nv_n -b(x)|u_n|^{\ell^*-2}u_nv_ndx=o_n(1).
\end{equation}
In other words, we know that
\begin{eqnarray}\label{eq-v_n}
\int_{\R^N}\phi(|\nabla u_n|)\nabla u_n(\nabla u_n-\nabla u) dx + \int_{\R^N}(u_n-u)\phi(|\nabla u_n|)\nabla u_n\nabla \chi dx\nonumber\\
= \displaystyle\int_{\Omega}(\lambda a(x)|u_n|^{q-2}u_n+ b(x)|u_n|^{\ell^*-2}u_n)v_ndx+o_n(1).
\end{eqnarray}
Note that
$$
\displaystyle \int_{\R^N} \big|\phi(|\nabla u_n|)\nabla u_n\nabla \chi(u_n- u) \big| dx \leq \displaystyle\|\phi(|\nabla u_n|)|\nabla u_n|\|_{\widetilde{\Phi}} |\nabla \chi|_\infty\|(u_n-u)\|_\Phi = o_n(1).
$$
Moreover, we mention that $L_{\Phi}(\Omega)\hookrightarrow L^\ell(\Omega)\hookrightarrow L^q(\Omega)$ which show that
\begin{eqnarray}
	\int_{\R^N}a(x)|u_n|^{q-1}|v_n|dx&\leq& \|a\|_\infty\|u_n\|_q^{q-1}\|u_n-u\|_q\nonumber\\
	&\leq& C \|a\|_\infty\|u_n\|_\Phi^{q-1}\|u_n-u\|_\Phi=o_n(1).\nonumber
\end{eqnarray}
\nd Additionally, using that $(|u_n|^{\ell^*-1})$ is bounded in $L^{\frac{\ell^*}{\ell^*-1}}(\Omega)$ and Lemma \ref{conv_compactos-1}, we conclude that
\[
\displaystyle \int_{\R^N}  |b(x)|| u_n|^{\ell^*-1}|v_n |dx  \leq  \|b\|_\infty\|u_n\|_{\ell^*}^{\ell^*-1}\|u_n-u\|_{L^{\ell^*}(S_{\chi})}
= o_{n}(1)
\]
where $S_\chi:=\mbox{supp}(\chi)$. In this way, using \eqref{eq-v_n} we get

\begin{eqnarray}\label{eq-u-0}
\int_{\R^N}\phi(|\nabla u_n|)\nabla u_n(\nabla u_n-\nabla u) \chi dx=o_n(1).	
\end{eqnarray}
Furthermore, using that $u_n\rightharpoonup u$ in $\w$ and $\chi \phi(|\nabla u|)|\nabla u|\in L_{\widetilde \Phi}(\Omega)$, putting $u_n=u=0$ in $\R^n \backslash \overline\Omega$ we see that
\begin{eqnarray}\label{eq-u}
\int_{\R^N}\phi(|\nabla u|)\nabla u(\nabla u_n-\nabla u)\chi dx=o_n(1).
\end{eqnarray}
At this stage using \eqref{eq-u-0}, \eqref{eq-u} in \eqref{monot} we ensure that $A_n$ is in $L^1(K_\nu)$. Now, up to a subsequence, we get
$$A_n(x)\rightarrow 0,~\mbox{a.e.}~x\in K_\nu.$$
Hence \eqref{rn-K_nu} implies that
$$A_n(x)\rightarrow 0,~\mbox{a.e.}~x\in \R^N.$$
It follows from \cite[Lemma 6]{DM} that
$$\nabla u_n\rightarrow \nabla u,~\mbox{a.e. in}~\R^N.$$
Moreover, using the fact that $u_n=0$ in $\R^N \backslash \overline{\Omega}$, we also see that
$$\nabla u_n\rightarrow \nabla u,~\mbox{a.e. in}~\Omega.$$
Using the fact that $t\longmapsto\phi(t)t$ is a continuous function one has
$$\phi(|\nabla u_n|)\nabla u_n \rightarrow \phi(|\nabla u|)\nabla u,~\mbox{a.e. in}~\Omega.$$
In this way, using that $\widetilde\Phi(\phi(t)t)\leq \Phi(2t)$, we obtain $\phi(|\nabla u_n|)|\nabla u_n|$ is bounded in $L_{\widetilde\Phi}(\Omega)$. Therefore, using \cite[Lem. 2, pg. 88]{gossez-Czech}, we have been shown that
$$\phi(|\nabla u_n|)\nabla u_n\rightharpoonup \phi(|\nabla u|)\nabla u,~\mbox{in}~\prod L_{\widetilde{\Phi}}(\Omega).$$
This ends the proof of item $i)$.

Now we shall prove the item $(ii)$. Note that $\w\stackrel{cpt}\hookrightarrow L_\Phi(\Omega)$ showing that $u_n\rightarrow u$ in $L_\Phi(\Omega)$. Up to a subsequence we have that $u_n\rightarrow u$ a. e. in $\Omega$. Hence we easily see that
$$|u_n|^{\ell^*-2}u_n \rightarrow|u|^{\ell^*-2}u,~\mbox{a.e. in }\Omega.$$
Now using the fact that $(|u_n|^{\ell^*-2}u_n) $ is bounded in $L^{\frac{\ell^*}{\ell^*-1}}(\Omega)$ and using one more time \cite[Lem. 2, pg. 88]{gossez-Czech} we conclude that
$$|u_n|^{\ell^*-2}u_n \rightharpoonup|u|^{\ell^*-2}u,~\mbox{in }L^{\frac{\ell^*}{\ell^*-1}}(\Omega).$$
This completes the proof. \hfill\cqd%$\hfill{\rule{2mm}{2mm}}$

\section{The proof of our main theorems}

\subsection{The proof of Theorem \ref{teorema1}}

Let $\lambda < \Lambda_1 = \displaystyle \min\{\lambda_1,\bar{\lambda}_1\}$ be fixed where $\lambda_1 > 0$ is given by \eqref{Lambda1} and $\bar{\lambda}_1>0$  is provided in \eqref{lambda1barra}. Taking into account Lemma \ref{nehari+} we infer that $$\alpha_\lambda^+:=\ds\inf_{u\in\N^+}J_\lambda(u) < 0.$$
The main feature here is to find a function $u=u_\lambda\in \N^+$ in such way that $$J_\lambda(u)= \ds\min_{u\in \N^+(\Omega)}J_\lambda (u)=:\alpha_\lambda^+ \,\, \mbox{and} \,\, J^{\prime}(u) \equiv 0.$$
As a first step, using Proposition \ref{lem1ps}, there exists a minimizer sequence denoted by $(u_n)\subset W^{1,\Phi}(\Omega)$ such that
\begin{equation}\label{cerami1}
J_\lambda(u_n)=\alpha_\lambda+o_n(1) \mbox{ and }
J'_\lambda(u_n)=o_n(1).
\end{equation}
Since the functional $J_\lambda$ is coercive in $\N^+$ we obtain that $(u_n)$  is now bounded in $\N^+$. Therefore, there exists a function $u\in{W^{1,\Phi}_0(\Omega)}$  in such way that
\begin{equation} \label{convergencia}
u_n \rightharpoonup u \,\, \mbox{ in } \,\, W_0^{1,\Phi}(\Omega),~~
u_n \to u \,\,\mbox{a.e.}\,\, \mbox{ in } \Omega,~~
u_n \to u \,\, \mbox{ in } \,\, L^{\Phi}(\Omega).
\end{equation}
\nd At this point we shall prove that $u$ is a weak solution for the problem elliptic problem \eqref{eq1}.
First of all, using \eqref{cerami1}, we mention that
$$\on=\left<J'_\lambda(u_n),v\right>=\Int \phi(|\nabla u_n|)\nabla u_n\nabla v-\lambda a(x)|u_n|^{q-2}u_n v-b(x)|u_n|^{\el-2}u_nv$$
holds for any $v \in \w$. In view of \eqref{convergencia} and Lemma \ref{conv_grad_qtp} we get
$$\Int \phi(|\nabla u|)\nabla u\nabla v-\lambda a(x)|u|^{q-2}u v-b(x)|u|^{\el-2}v = 0$$
for any $v\in W^{1,\Phi}(\Omega)$ proving that u is a weak solution to the elliptic problem \eqref{eq1}.
Additionally, the weak solution $u$ is not zero. In fact, using the fact that $u_n\in \mathcal{N}^{+},$ we obtain
$$\begin{array}{rcl}
\lambda\Int a(x)|u_n|^q&=& \Int( \Phi(|\nabla u_n|)- \Fr{1}{\el}\phi(|\nabla u_n|)|\nabla u_n|^2) \Fr{q\el}{\el-q} -J_\lambda(u_n) \Fr{q\el}{\el-q}\\[3ex]
&\geq& \Fr{q\el}{\el-q} \left(1 - \dfrac{m}{\ell^*}\right)\Int \Phi(|\nabla u_n|) -J_\lambda(u_n)\Fr{q\el}{\el-q} \\[3ex]
&\geq& -J_\lambda(u_n)\Fr{q\el}{\el-q}.
\end{array}
$$
Taking into account \eqref{cerami1} and \eqref{convergencia} we also obtain that
$$\lambda\Int a(x)|u|^q\geq-\alpha^{+}_\lambda\Fr{q\el}{\el-q} > 0.$$
As a consequence we deduce that $u\not\equiv 0$.

At this stage we shall prove that $J_\lambda(u)=\alpha_\lambda$ and $u_n\to u$ in $W_0^{1,\Phi}(\Omega)$.
Since $u\in \N$ we also see that
$$\alpha_\lambda \leq J_\lambda(u)=\Int \Phi(|\nabla u|)-\Fr{1}{\el}\phi(|\nabla u|)|\nabla u|^2-\lambda\left(\Fr{1}{q}-\Fr{1}{\el}\right)a(x)|u|^{q}.$$
Recall that
$$t\mapsto\Phi(t)-\Fr{1}{\el}\phi(t)t^2$$
is a convex function. In fact, using \eqref{conseqphi3}
and $m<\el$, we deduce that
\begin{eqnarray}
	\left(\Phi(t)-\Fr{1}{\el}\phi(t)t^2\right)''&=&\left[ \left(1-\frac{1}{\el}\right)t\phi(t)-\frac{1}{\el}t(t\phi(t))'\right]'\nonumber\\
	&=& (t\phi(t))' \left[\left(1-\frac{2}{\el}\right)-\frac{1}{\el}\frac{t(t\phi(t))''}{(t\phi(t))'}\right]\nonumber\\
	& \geq & (t\phi(t))' \left[\left(1-\frac{2}{\el}\right)-\frac{m-2}{\el}\right]\nonumber\\
	&=& (t\phi(t))'\left(1-\frac{m}{\el}\right)>0, t > 0.\nonumber
\end{eqnarray}
Hence the last assertion says that
$$u\longmapsto \Int \Phi(|\nabla u |)-\Fr{1}{\el}\phi(|\nabla u |)|\nabla u |^2dx$$
is weakly lower semicontinuous from below. Therefore we obtain
\begin{eqnarray}
	\alpha_\lambda \leq J(u) &\leq & \liminf \left(\Int \Phi(|\nabla u_n|)-\Fr{1}{\el}\phi(|\nabla u_n|)|\nabla u_n|^2\right.\nonumber\\
	&-&\left.\lambda\left(\Fr{1}{q}-\Fr{1}{\el}\right)a(x)|u_n|^{q}\right)\nonumber\\
	&=&\liminf J_\lambda(u_n)= \alpha_\lambda.\nonumber
\end{eqnarray}
As a consequence we have $J_\lambda(u)=\alpha_\lambda.$ Additionally, using \eqref{convergencia}, we also mention that
$$\begin{array}{rcl}
J_\lambda(u)&=&\Int \Phi(|\nabla u|)-\Fr{1}{\el}\phi(|\nabla u|)|\nabla u|^2-\lambda\left(\Fr{1}{q}-\Fr{1}{\el}\right)a(x)|u|^{q}\\[3ex]
&=&
\lim \left(\Int \Phi(|\nabla u_n|)-\Fr{1}{\el}\phi(|\nabla u_n|)|\nabla u_n|^2-\lambda\left(\Fr{1}{q}-\Fr{1}{\el}\right)a(x)|u_n|^{q}\right)\\[3ex]
&=&
\lim \left(\Int \Phi(|\nabla u_n|)-\Fr{1}{\el}\phi(|\nabla u_n|)|\nabla u_n|^2\right)-\lambda\left(\Fr{1}{q}-\Fr{1}{\el}\right)\Int a(x)|u|^{q}.
\end{array}$$
It follows from the last identity that
$$\lim \left(\Int \Phi(|\nabla u_n|)-\Fr{1}{\el}\phi(|\nabla u_n|)|\nabla u_n|^2\right)=\Int \Phi(|\nabla u|)-\Fr{1}{\el}\phi(|\nabla u|)|\nabla u|^2.$$
In view of Brezis-Lieb Lemma, choosing $v_n=u_n-u,$ we infer that
\begin{eqnarray}
	\lim \left(\Int \Phi(|\nabla u_n|)-\Fr{1}{\el}\phi(|\nabla u_n|)|\nabla u_n|^2+ \Phi(|\nabla v_n|)-\Fr{1}{\el}\phi(|\nabla v_n|)|\nabla v_n|^2\right)\nonumber\\
	=\Int \Phi(|\nabla u|)-\Fr{1}{\el}\phi(|\nabla u|)|\nabla u|^2.
\end{eqnarray}
 In this way, the previous assertion implies that
$$0=\lim \left(\Int \Phi(|\nabla v_n|)-\Fr{1}{\el}\phi(|\nabla v_n|)|\nabla v_n|^2\right)\geq \lim\left(1-\Fr{m}{\el}\right)\Int \Phi(|\nabla v_n|)\geq 0.$$ Therefore, we obtain that $\lim \int_{\Omega} \Phi(|\nabla v_n|)=0$ and $u_n\to u \,\,\mbox{in} \,\, W^{1,\Phi}(\Omega).$  Hence we conclude that $u_{n} \rightarrow u$ in $\w$.

At this point we shall ensure that $u\in \N^+$. Arguing by contradiction we have that $u\notin\N^+$. Using Lemma \ref{fib} there are unique $t_0^+, t_0^->0$ in such way that $t_0^+u\in\N^+$ and $t_0^-u\in\N^-$. In particular, we know that $t_0^+< t_0^-=1.$ Since
$$\Fr{d}{dt}J_\lambda(t_0^+u)=0 \,\,\mbox{and} \,\, \Fr{d^2}{dt^2}J_\lambda(t_0^+u)>0$$
there exist $t^- \in ( t_0^+, t_0^- )$ such that $J_\lambda(t_0^+u)<J_\lambda(t^-u)$. As a consequence $J_\lambda(t_0^+u)<J_\lambda(t^-u)\leq J_\lambda(t_0^-u)=J_\lambda(u)$ which is a contradiction due the fact that $u$ is a minimizer in $\N^+$. So that $u$ is in $\N^+$.

Due the fact that $J_\lambda(u)=J_\lambda(|u|)$ and $J^{\prime}_\lambda(u) =J^{\prime}_\lambda(|u|)$ we show that $|u|\in \N^+$ for each $u \in \N^{+}$. Taking into account Lemma \ref{criticalpoint} we conclude $|u|$ is also a critical point of $J_\lambda$. To sum up,  we assume that $u\geq 0.$ holds true.

Finally, we observe that $\displaystyle\lim_{\lambda\to 0}||u||=0$. Indeed, since $u\in\N^+$ and arguing as in the proof of Lemma \ref{c1}, we get
$$||u||^{\alpha-q}\leq \lambda\Fr{\el-q}{\ell(\el-m)}S_{\ell}||a^+||_{(\frac{\ell}{q})'}$$ where we put $\alpha \in \{\ell,m\}$. This ends the proof of Theorem \ref{teorema1}. \hfill\cqd

\subsection{The proof of Theorem \ref{teorema2}}

Put $\Lambda_2 = \min\{\bar{\lambda}_1,\tilde{\lambda}_1\}$ where $\bar{\lambda}_1$ is provided in \eqref{lambda1barra} and  $\tilde{\lambda}_1$ is given by Lemma \ref{nehari-}. Initially, due Lemma \ref{nehari-}, there exists $\delta_1>0$ such that $J_\lambda(v)\geq \delta_1$ for any $v\in \N^{-}.$
As a consequence
$$\alpha^{-}_{\lambda}:= \ds \inf_{v \in \N^{-}}J_\lambda(v)\geq \delta_1>0.$$

Now we shall consider a minimizer sequence $(v_n)\subset \N^{-}$ given in Proposition \ref{lem1ps}, i.e, $(v_n)\subset \N^{-}$ is a sequence satisfying
\begin{equation} \label{e1}
\ds\lim_{n\to\infty}J_\lambda(v_n)=\alpha_{\lambda}^{-} \,\,\mbox{and} \,\, \ds\lim_{n\to\infty} J^{\prime}(v_{n}) = 0.
\end{equation}
Since $J_\lambda$ is coercive in $\N$ and so on $\N^{-}$, using Lemma \ref{c1}, we can be shown that $(v_n)$ is a bounded sequence
in $W^{1,\Phi}_{0}(\Omega).$ Up to a subsequence we assume that $v_n\rightharpoonup v$ in $W^{1,\Phi}_{0}(\Omega)$ holds for some $v \in \w$.  Additionally, using the fact that $q <\ell^{*}$, we obtain $t^{q}<<\Phi_{*}(t)$ and $W_{0}^{1,\Phi}(\Omega)\hookrightarrow L^{q}(\Omega)$ is also a compact embedding. This fact ensures that $v_n\to v$ in  $L^{q}(\Omega).$ In this way, we easily seen that
\begin{equation*}\label{lim1}
\ds\lim_{n\to\infty} \Int a(x) |v_n|^{q}=\Int a(x) |v|^{q}.
\end{equation*}

Now we claim that $v \in \w$ given just above is a weak solution to the elliptic problem \eqref{eq1}. In fact, using \eqref{e1}, we infer that
$$\left<J'_\lambda(v_n), w\right>=\Int \phi(|\nabla v_n|)\nabla v_n\nabla w-\lambda a(x)|v_n|^{q-2}v_n w-b(x)|v_n|^{\el-2}v_n w = o_{n}(1)$$
holds for any $w \in \w$. Now using Lemma \ref{conv_grad_qtp} we get
$$\Int \phi(|\nabla v|)\nabla v\nabla w-\lambda a(x)|v|^{q-2}v w-b(x)|v|^{\el-2}v w = 0, w \in \w.$$
So that $v$ is a critical point for the functional $J_{\lambda}$.
Without any loss of generality, changing the sequence $(v_{n})$ by $(|v_{n}|)$, we can assume that $v \geq 0$ in $\Omega$.

Now we claim that $v \neq 0$. The proof for this claim follows arguing by contradiction assuming that $v \equiv 0$. Recall that
$J(t v_{n}) \leq J(v_{n})$ for any $t \geq 0$ and $n \in \mathbb{N}$. These facts imply that
\begin{eqnarray}
\left(1 - \dfrac{m}{\ell^{*}}\right)\int_{\Omega} \Phi(|\nabla t v_{n}|) &\leq& \lambda \left(t^{q} - 1\right)\left(\Fr{1}{q}-\Fr{1}{\el}\right) \int_{\Omega} a(x)|v_{n}|^{q}\nonumber \\
&+& \left(1 - \dfrac{\ell}{\ell^{*}}\right)\int_{\Omega} \Phi(|\nabla v_{n}|). \nonumber
\end{eqnarray}
Using the last estimate together with the fact that  $(v_{n})$ is bounded and \cite[Lemma 2.1]{Fuk_1}, we obtain
\begin{equation*}
\min(t^{\ell}, t^{m}) \left(1 - \dfrac{m}{\ell^{*}}\right) \int_{\Omega} \Phi(|\nabla v_{n}|) \leq \lambda \left(t^{q} - 1\right)\left(\Fr{1}{q}-\Fr{1}{\el}\right) \int_{\Omega} a(x)|v_{n}|^{q} + C
\end{equation*}
holds for some $C > 0$. These inequalities give us
\begin{equation*}
\min(t^{\ell}, t^{m}) \left(1 - \dfrac{m}{\ell^{*}}\right) \int_{\Omega} \Phi(|\nabla v_{n}|) \leq \lambda \left(t^{q} - 1\right)\left(\Fr{1}{q}-\Fr{1}{\el}\right) \|a\|_{\infty} \|v_{n}\|_{q}^{q} + C.
\end{equation*}
It is no hard to verify that the fact $\|v_{n}\| \geq c > 0$ for any $n \in \mathbb{N}$. Using one more time \cite[Lemma 2.1]{Fuk_1} we infer that
\begin{equation*}
 \min(t^{\ell}, t^{m}) \leq o_{n}(1) t^{q} + C
\end{equation*}
holds for any $t \geq 0$ where $C = C(\ell,m,\ell^{*}, \Omega, a,b) > 0$ where $o_{n}(1)$ denotes a quantity that goes to zero as $n \rightarrow \infty$. Here was used the fact $v_{n} \rightarrow 0$ in $L^{q}(\Omega)$. This estimate does not make sense for any $t > 0$ big enough using the fact that $q < \ell$. Hence $v \neq 0$ as claimed. As a consequence $v$ is in $\mathcal{N_{\lambda}} = \mathcal{N_{\lambda}^{+}} \cup \mathcal{N_{\lambda}^{+}}$.

At this stage we shall prove that $v_n\to v$ in $W_0^{1,\Phi}(\Omega)$. The proof follows arguing by contradiction.
Assume that $\displaystyle \liminf_{n \rightarrow \infty} \int_{\Omega} \Phi(|\nabla v_{n} - \nabla v|) \geq \delta$ holds for some $\delta > 0$.
Recall that $\Psi: \mathbb{R} \rightarrow \mathbb{R}$ given by
$$t\mapsto \Psi(t) = \Phi(t)-\Fr{1}{\el}\phi(t)t^2$$
is a convex function for each $t \geq 0$. The Brezis-Lieb Lemma for convex functions says that
\begin{equation*}
\lim_{n \rightarrow \infty} \int_{\Omega} \Psi(|\nabla v_{n}|) - \Psi(|\nabla v_{n}- v|) =  \int_{\Omega} \Psi(|\nabla v|)
\end{equation*}
In particular, the last estimate give us
\begin{equation*}
\int_{\Omega} \Psi(|\nabla v|) < \liminf_{n \rightarrow \infty}  \int_{\Omega} \Psi(|\nabla v_{n}|).
\end{equation*}
Since $v\in \N$ there exists unique $t_{0}$ in $(0, \infty)$ such that $t_{0} v \in \mathcal{N}_{\lambda}^{-}$. It is easy to verify that
\begin{equation*}
\int_{\Omega} \Psi(|\nabla t_{0}v|) < \liminf_{n \rightarrow \infty}  \int_{\Omega} \Psi(|\nabla t_{0} v_{n}|).
\end{equation*}
As a consequence we see that
\begin{eqnarray}
\alpha^{-}_\lambda &\leq& J_\lambda(t_{0}v ) = \Int \Psi(|\nabla t_{0} v|)-\lambda\left(\Fr{1}{q}-\Fr{1}{\el}\right)a(x)|t_{0} v|^{q} \nonumber \\
 &<& \liminf_{n \rightarrow \infty} \Int \Psi(|\nabla t_{0} v_{n}|)-\lambda\left(\Fr{1}{q}-\Fr{1}{\el}\right)a(x)|t_{0} v_{n}|^{q} \nonumber \\
 &=& \liminf_{n \rightarrow \infty} J_{\lambda}(t_{0}v_{n}) \leq \liminf_{n \rightarrow \infty} J_{\lambda}(v_{n}) = \alpha_{\lambda}^{-}.  \nonumber
\end{eqnarray}
This is a contradiction proving that $v_n\to v$ in $W_0^{1,\Phi}(\Omega)$. As a consequence $v$ is in $\mathcal{N}^{-}_{\lambda}$. This follows from the strong convergence and the fact that $t = 1$ is the unique maximum point for the fibering map $\gamma_{v}$ for any $v \in \mathcal{N}_{\lambda}^{-}$.
Hence using the same ideas discussed in the proof of Theorem \ref{teorema1} we infer that
\begin{equation*}
\alpha^{-}_\lambda \leq J_{\lambda}(v) \leq \liminf J_{\lambda}(v_{n}) = \alpha^{-}_\lambda.
\end{equation*}
In particular, we see that $\alpha^{-}_\lambda = J_{\lambda}(v)$ and
\begin{equation*}
\lim \int_{\Omega} \Phi(|\nabla v_{n}|) - \dfrac{1}{\ell^{*}} \phi(|\nabla v_{n}|)|\nabla v_{n}|^{2} = \int_{\Omega} \Phi(|\nabla v|) - \dfrac{1}{\ell^{*}} \phi(|\nabla v|)|\nabla v|^{2}.
\end{equation*}
In particular, we know that $J_{\lambda}(v)\geq \delta_{1} > 0$. So we finish the proof of Theorem \ref{teorema2}. \hfill\cqd

\subsection{The proof of Theorem \ref{teorema3}}

In view of Theorems \ref{teorema1} and \ref{teorema2} there are $u\in \mathcal{N}^{+}$ and $v \in \mathcal{N}^{-}$ in such way that $$J_\lambda(u)=\ds\inf_{w\in \mathcal{N}^{+}}J_\lambda(w)\ \ \ \mbox{and}\ \ \  J_\lambda(v)=\ds\inf_{ w\in \mathcal{N}^{-}}J_\lambda(w).$$
Additionally, using the fact that $0 < \lambda < \Lambda := \min\{\Lambda_1,\Lambda_2\}$ where $\Lambda_1, \Lambda_2>0$ are given by Theorem \ref{teorema1} and Theorem \ref{teorema2} we stress that $\N^{+}\cap \N^{-}= \emptyset$. Therefore, $u,v$ are nonnegative ground state solutions to the elliptic problem \eqref{eq1}. As was mentioned before, using the fact that
$$ J_\lambda(w)=J_\lambda(|w|)\,\, \mbox{and}\,\, J^{\prime}_\lambda(w)=J^{\prime}_\lambda(|w|)$$
holds true for any $w \in \w$ we can assume $u, v\geq 0$ in $\Omega$. Furthermore, $u$ and $v$ are nontrivial critical points for $J_{\lambda}$ proving that problem \eqref{teorema1} admits at least two nontrivial solutions whenever $0 < \lambda < \Lambda$.
This completes the proof.
\hfill\cqd

\end{document}